\newtheorem{theorem}{Theorem}[section]
\newtheorem{lemma}[theorem]{Lemma}
\newtheorem{proposition}[theorem]{Proposition}
\newcommand{\nr}{{\mathcal{N}^{1+\frac{2}{r},r}}}
\begin{document}

\title{On the fractional regularity for degenerate equations with $(p,q)$-growth}
\author{Lu\'is H. de Miranda}\thanks{The first author was partially supported by FAPDF-Brazil, grant 4749.25.27523.07072015.}
\address[Lu\'\i s Henrique de Miranda]{Departamento de Matem\'atica, Universidade de Bras\'ilia, Campus Universit\'ario Darcy Ribeiro, Bras\'\i lia-DF, 70910-900, Brazil}
 \email{demiranda@unb.br}

\author{Adilson E. Presoto}\thanks{The second author was partially supported by FAPESP-Brazil, grant 2015/20831-7.}
\address[Adilson E. Presoto]{Departamento de Matem\'atica, Universidade Federal de S\~ao Carlos,13565-905, S\~ao Carlos -
SP, Brazil}
 \email{presoto@dm.ufscar.br}

\subjclass[2010]{Primary: 35B45, 35B65, 35J70.}

\keywords{Fractional regularity, $(p,q)$-Laplacian, A priori bounds, Nikolskii spaces.}

\begin{abstract}
This paper addresses the gain of global fractional regularity in Nikolskii spaces for solutions of a class of quasilinear degenerate equations with $(p,q)$-growth. Indeed, we investigate the effects of the datum on the derivatives of order greater than one of the solutions of the $(p,q)$-Laplacian operator, under Dirichlet's boundary conditions. As it turns out, even in the absence of the so-called Lavrentiev phenomenon and without variations on the order of ellipticity of the equations,  the fractional regularity of these solutions ramifies depending on the interplay between the growth parameters $p$, $q$ and the data. Indeed, we are going to exploit the absence of this phenomenon in order to prove the validity up to the boundary of some regularity results, which are known to hold locally, and as well provide new fractional regularity for the associated solutions. In turn, there are obtained certain global regularity results by means of the combination between new a priori estimates and approximations of the differential operators, whereas the nonstandard boundary terms are handled by means of a careful choice for the local frame.
\end{abstract}

\maketitle
\section{Introduction}


The present work is devoted to the investigation of the fractional regularity of solutions to the following class of degenerate elliptic equations

 \begin{equation}
  \label{eq1}
    \tag{$\mathcal{D}$}
  \begin{cases}
   -\alpha\Delta_p u- \beta \Delta_q u  & = f  \text{ in } \Omega\\
   \nonumber
   \quad \quad \quad \quad \quad \ \quad u&=0    \text{ on } \partial \Omega
   \end{cases}
   \end{equation}
 where  $q\geq p>2$, $\alpha > 0$, $\beta\geq0$, and $\Omega \subset \mathbb{R}^N$ is an open bounded domain of class $C^{2,1}$. Indeed, our aim is to describe the effects of the parameters $\alpha$ and $\beta$, which control the ellipticity of \eqref{eq1}, and also the interference of the interplay between $p$, $q$, and the order of integrability of $f$ on the spatial derivatives of order greater than one of the solutions to this class of equations, the well-known $(p,q)$-Laplacian operator.

In the past years, the investigation of regularity properties for solutions of quasilinear equations for which the ellipticity has nonstandard growth, i.e., involving two different powers, has been widely addressed in the literature on the area, mostly after the remarkable contributions of Marcellini and Lieberman, among others, for instance see \cite{lie2,mar} and the references therein. After that, several authors have helped to provide results on maximal regularity, higher differentiability, Calder\'{o}n-Zygmund estimates, among other topics, of the associated solutions for large a class of equations with nonstandard ellipticity which generalize \eqref{eq1} in several distinct ways, including the case where the ellipticity depends on the spatial variables, what may cause the occurrence of the so-called Lavrentiev phenomenon, see \cite{zhikov}. In order to remark some of the works addressing these matters, once again with no intention of being complete, we refer the reader to \cite{akm,bcm,byo,ckn1,ckn2,com1,com3,com2,elm,min}, or the survey \cite{min2}, and also the references therein. However, we must stress that there are still few results concerning the fractional regularity of the solutions to such equations, see \cite{bgop,cpa} for instance.

Thus, our purpose is to revisit \eqref{eq1} and provide some new results on the global fractional regularity of its solutions. Indeed, in order to illustrate certain ideas, let us consider momentarily the case where $\beta=\beta(x)$ is Lipschitz continuous, i.e.,
 \begin{equation}
 \label{eq11}
 \tag{$\mathcal{D}_x$}
  -\alpha\Delta_p u-  \mbox{div}\big(\beta(x)|\nabla u|^{q-2}\nabla u\big)  = f  \text{ in } \Omega,
  \end{equation}
  where $f$ belongs to some space which is imbedded in $L^s(\Omega)$.
 As it turned out, one of the interesting points of problems like  \eqref{eq11} is that there may have a certain loss of information on the regularity of its solutions due to the instability on the degree of ellipticity of the equation, depending on the behavior of $\beta(x)$ and most of all, if $p$ and $q$ are not close enough. Indeed, there is a distortion when we contrast the case $\alpha=0$ and $\alpha>0$ which is caused, in part, by the Lavrentiev phenomenon, see \cite{com1,elm}. Moreover, we remark that under suitable conditions for $f$, $p$ and $q$, for instance for the case $f\equiv 0$, if \[p<q< p+\frac{p}{N},\] and \(\beta(x)\) is locally Lipschitz, the solutions are locally regular. In addition, this bound could be relaxed for \[p<q\leq p+1\]  when the solutions of \eqref{eq11} are assumed to be locally bounded, what allows the validity of very interesting results concerning existence and regularity of solutions, see \cite{bcm,com2}.
Despite that, up to now, even for the case where $\alpha$ and $\beta$ are constants, the current literature provides few results regarding  global fractional regularity of the solutions, and most of all, almost none information on the interference of the interplay among the data, $p, q$ and $s$, on this sort of regularity up to the boundary. Thence, since the interaction between these parameters must change the character of the differential operator which is being considered, our purpose is to make these effects explicit in terms of the fractional regularity and foremost, in terms of global a priori estimates of the associated solutions. Thereby, it is our main goal to investigate these problems and to provide precise answers to the effect of the correlation between these parameters on the fractional regularity of the solutions, what is done by means of explicit global a priori estimates on adequate functional spaces, see Theorems \ref{theorem1} and \ref{theorem2} below. Indeed, we are going to revisit the most classic case for $(p,q)$- ellipticity, namely \eqref{eq1}, and provide new results for the regularity of its solutions. This will include interesting phenomena regarding some kind of ramification for the fractional differentiability of these solutions even in the case where there is no change in the order of ellipticity of the problem and in the absence of Lavrentiev phenomenon, see Theorem \ref{theorem3} below.

Further, we point out that the investigation of higher differentiability of the solutions for this sort of equations usually is associated with the behavior of the vector fields
\[v_p=|\nabla u|^{p-2}\nabla u \mbox{ or } v_q=|\nabla u|^{q-2}\nabla u\]
or specifically, whether $v_p$ or $v_q$ belong to $W^{1,2}(\Omega)$, which is actually sharp in some cases, see for instance \cite{min} and the references therein.
However, as it turned out, depending on the regularity of the data, there are other vector fields which cause the appearance of additional fractional differentiability.  In the present work, we describe this additional regularity for the case of the $(p,q)$-Laplacian and also provide new a priori estimates for its solutions. Actually,  the fractional regularity obtained in our main results is a consequence of a thorough analysis of the vector fields
\[v_{r}=|\nabla u|^{r-2}\nabla u \]
where $r=r(p,q,f,\alpha,\beta)>0$ will be given below and the associated regularity ramifies depending on the interaction between the data. Surprisingly, even in the case where $\beta>0$, the pure $(p,q)$-growth, the fractional regularity of the solutions splits along certain Nikolskii spaces according to the interconnection of these parameters beneath certain threshold  cases, see Theorem \ref{theorem3}.

Finally, if on one hand, we do not know if the regularity provided in our main results is sharp, any sort of improvement on it would depend on providing new information for the $v_r$ vector fields.


\subsection{Notations, assumptions and main results}
First of all, we must introduce the following class of exponents
\begin{equation}
\label{exponents}\tag{$\mathcal{E}$}
r_i=
\begin{cases}

&s(p-2)+2, \mbox{ for }i=1
\\
&s(q-2)+2, \mbox{ for }i=2
\\
&s(p-2)+2+q-p,\mbox{ for }i=3
\\
&s(q-2)+2+p-q, \mbox{ for }i=4
\end{cases}
\end{equation}
where $s>2$.
As it turns out, each possibility describes certain degrees of regularity for the solutions of \eqref{eq1}. The specific choice of $r_i$  will naturally depend on the interplay between the parameters $\alpha, \beta, p, q$, and $f$.

Our approach to regularity will involve functional spaces of fractional order of differentiability. As a matter of fact, despite that there are nowadays countless possibilities to describe derivatives of fractional order, in the present discussion we will consider mostly the case of the Nikolskii  spaces, which are a particular case of the more general Besov class. For the sake of completeness we briefly describe these spaces below, and for further information, refer the interested reader to \cite{grisvard,kufner,leoni}, 
where the relying theory is discussed thoroughly.

The choice for the spaces is motivated mostly by the nature of the estimates which appear on the subsequent calculations, so that is some sense, they are natural by the point of view of the differential operators considered. From now on, given an open set $V\subset \mathbb{R}^M$, $M=N$ or $N-1$, we denote by
\[\mathcal{N}^{1+\frac{2}{r_i},r_i}(V)=\{u\in W^{1,r_i}(V): \llbracket u \rrbracket_{\mathcal{N}^{1+\delta,r_i}(V)}<+\infty\}\]
the Nikolskii space of order $1+\frac{2}{r_i}$. We stress that $\llbracket u \rrbracket_{\mathcal{N}^{1+\frac{2}{r_i},r_i}}$ is the so-called Gagliardo-Nikolskii seminorm or, simply, the Nikolskii seminorm, which, for a given $h\in \mathbb{R}^N\setminus \{0\},$ is defined by
\begin{equation}
\nonumber
\llbracket u \rrbracket_{\mathcal{N}^{1+\frac{2}{r_i},r_i}(V)}=\bigg(\sup_{|h|>0}\int_{V_{|h|}} \dfrac{|\nabla u(x+h)-\nabla u(x)|^{r_i}}{|h|^{2}}\bigg)^{\frac{1}{r_i}},
\end{equation}
where $V_{|h|}=\{ x\in V: \mbox{dist}(x,\partial V)\geq |h|\}$.

Further, we remark that in the present work each one of such spaces is going to be endowed with the norms described below
\begin{equation}
\nonumber
\begin{cases}
&\|u\|_{\mathcal{N}^{1+\frac{2}{r_i},r_i}(V)}= \|u\|_{W^{1,p}(V)}+\llbracket u \rrbracket_{\mathcal{N}^{1+\frac{2}{r_i},r_i}(V)}, \mbox{ for } i=1 \mbox{ or }4
\\
&\|u\|_{\mathcal{N}^{1+\frac{2}{r_i},r_i}(V)}= \|u\|_{W^{1,q}(V)}+\llbracket u \rrbracket_{\mathcal{N}^{1+\frac{2}{r_i},r_i}(V)},\mbox{ for } i=2 \mbox{ or }3.
\end{cases}
\end{equation}
In turn, these choices are motivated by the peculiar set up of \eqref{eq1}, and are more suitable for our purposes. Anyway, by simple interpolation arguments and under our basic assumptions it is clear that the latter norms are equivalent to the standard ones, i.e., where $W^{1,p}$ or $W^{1,q}$ are replaced by $W^{1,r_i}$

Complementarily, we also consider the following fractional spaces
\[W^{1+\delta,r}(V)=\{u\in W^{1,r}(V): \llbracket u \rrbracket_{W^{1+\delta,r}(V)}<+\infty\}\]
for \[1\leq r <+\infty
, 0<\delta<1,\]
the standard Sobolev-Slobodeckii spaces of order $1+\delta$, which are endowed with norms given by
\[\|u\|_{W^{1+\delta,r}(V)}= \|u\|_{W^{1,r}(V)}+\llbracket u \rrbracket_{W^{1+\delta,r}(V)}.\]
In the last case,
\begin{equation}
\label{slobodeckiinorm}\tag{$\mathcal{G}$}
\nonumber
\llbracket u \rrbracket_{W^{1+\delta,r}(V)}=\bigg(\int\int_{V \times V} \dfrac{|\nabla u(x)-\nabla u(y)|^r}{|x-y|^{N+\delta r}}\bigg)^{1/r}
\end{equation}
is the associated Gagliardo-Slobodeckii or Slobodeckii-Sobolev seminorm.

In addition, we stress that along our entire discussion, in order to not overburden the notations and since the difference will be clear from the context, we will  use the same notations for scalars, vectors in $\mathbb{R}^N$ and square matrices in $\mathcal{M}(N\times N)$. For instance, we write  $u \in L^2(\Omega)$,  even when $u$ is a vector field or a square matrix, meaning that all of its components are in $L^2(\Omega)$, while $|u|$ may indicate the absolute value, the Euclidean norm in $\mathbb{R}^N$ or $\mathbb{R}^{N^2}$. Moreover, the norms  on $(L^{r}(\Omega))^N$, $(L^{r}(\Omega))^{N^2}$ and $L^{r}(\Omega)$  will be denoted  as $\|.\|_{L^{r}}$,
and so on.

Finally, let us remark that, from now on, along the text, the dependence of the function spaces on $\Omega$ will be omitted, whenever this does not lead to confusion, while dependence on $\partial \Omega$ or $\overline{\Omega}$ will always be emphasized. Thence $\mathcal{N}^{1+\frac{2}{r},r}$ denotes $\mathcal{N}^{1+\frac{2}{r},r}(\Omega)$ and $W^{1+\delta,r}$ denotes $W^{1+\delta,r}(\Omega)$. Further, we stress that $\epsilon$ will always be assumed to satisfy $\epsilon \in (0,1]$, and $C$ stands for a general positive constant, which may vary from line to line. In turn, the dependence of $C$ on the data will be indicated as $C=C(...)>0$.

\subsection{Basic Assumptions}
From now on, unless otherwise stated, we going to consider the following hypotheses.
\begin{align}
\label{hip1}\tag{$\mathcal{H}_1$} &\Omega \subset \mathbb{R}^N \mbox{ is an open bounded set where } \partial \Omega \in C^{2,1}.
\\
\label{hip2} \tag{$\mathcal{H}_2$} &f \in W^{\sigma,s}(\Omega) \mbox{ where } s \geq 2 \mbox{ and } \sigma > \dfrac{1}{s}.
\\
\label{hip3}\tag{$\mathcal{H}_3$}  &q\geq p> 2, \alpha >0, \mbox{ and } \beta \geq 0.
\end{align}

\subsection{Main Results}
Is this section, there will be provided the main contributions of the present paper.

First, we present a result which provides some contribution for the investigation of the global fractional regularity of solutions  even for the case of the single $p$-Laplacian.
\begin{theorem}
\label{theorem1}

Under the hypotheses \eqref{hip1},\eqref{hip2} and \eqref{hip3} 
  there exists a unique strong solution to \eqref{eq1}, i.e.,
\[u \in \mathcal{N}^{1+\frac{2}{r_1},r_1}(\Omega) \cap W^{1,p}_0(\Omega),\]
where $r_1=s(p-2)+2$, such that
\[-\alpha \Delta_p u+\beta \Delta_q u = f \mbox{ a.e. in } \Omega.\]
Moreover, there exists $C=C(N,p,s,\alpha,\beta,\sigma,\Omega)>0$ for which
\[ \|u\|_{\mathcal{N}^{1+\frac{2}{r_1},r_1}(\Omega)}^{r_1} \leq C \bigg (\|f\|^{\frac{r_1}{p-1}}_{W^{\sigma,s}(\Omega)}+\|f\|^{\frac{r_1}{p-1}}_{L^{p^\prime}(\Omega)}+\|f\|^s_{L^s(\Omega)} +\|f\|^2_{L^s(\Omega)} \bigg) .\]
In particular, since $s\geq 2$
\[ \|u\|_{\mathcal{N}^{1+\frac{2}{r_1},r_1}(\Omega)}^{r_1} \leq C \bigg (\|f\|^{s}_{W^{\sigma,s}(\Omega)}+1\bigg) .\]

\end{theorem}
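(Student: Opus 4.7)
The plan is to combine monotone-operator theory for existence with a difference-quotient based a priori estimate on a suitable regularization of \eqref{eq1}, carrying out the boundary analysis in local charts with tangential finite differences only.

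For well-posedness, under \eqref{hip3} the map $Au := -\alpha\Delta_{p}u-\beta\Delta_{q}u$ is strictly monotone, demicontinuous and coercive on $W_{0}^{1,q}(\Omega)$, while \eqref{hip2} gives $f\in L^{s}(\Omega) \hookrightarrow W^{-1,q'}(\Omega)$ (since $s\geq 2$), so Browder--Minty produces a unique weak solution $u\in W_{0}^{1,q}(\Omega)\subset W_{0}^{1,p}(\Omega)$. Because the principal part is degenerate, the subsequent difference-quotient manipulations are only formal on $u$; I would therefore perform them on an approximation with non-degenerate principal part, for instance
\[
-\varepsilon\Delta u_{\varepsilon}-\alpha\,\mathrm{div}\bigl((|\nabla u_{\varepsilon}|^{2}+\varepsilon)^{(p-2)/2}\nabla u_{\varepsilon}\bigr)-\beta\,\mathrm{div}\bigl((|\nabla u_{\varepsilon}|^{2}+\varepsilon)^{(q-2)/2}\nabla u_{\varepsilon}\bigr)=f_{\varepsilon},
\]
with $f_{\varepsilon}$ a standard mollification; the Lieberman theory gives $u_{\varepsilon}\in C^{1,\alpha}(\overline{\Omega})\cap W^{2,2}_{loc}(\Omega)$, which is enough to justify all the testing below. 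A uniform-in-$\varepsilon$ Nikolskii bound then passes to the limit by the already-established uniqueness, and the resulting regularity forces the strong equation to hold a.e.\ for $u$.

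For the core interior estimate, test the (approximate) equation with $\eta^{2}(\tau_{h}u_{\varepsilon}-u_{\varepsilon})$, where $\eta$ is a smooth cutoff. Writing $a := \nabla\tau_{h}u_{\varepsilon}$, $b := \nabla u_{\varepsilon}$, $D := a-b$, the monotonicity inequality
\[
(|a|^{p-2}a-|b|^{p-2}b)\cdot(a-b) \geq c\,|D|^{2}(|a|+|b|)^{p-2}
\]
together with its nonnegative $q$-analogue isolates control of $\int\eta^{2}|D|^{2}(|a|+|b|)^{p-2}\,dx$. The right-hand side $\int\eta^{2}(\tau_{h}f_{\varepsilon}-f_{\varepsilon})(\tau_{h}u_{\varepsilon}-u_{\varepsilon})$ is handled by duality, using $\|\tau_{h}f-f\|_{L^{s}}\lesssim|h|^{\sigma}\|f\|_{W^{\sigma,s}}$ (this is where $\sigma>1/s$ enters) paired with $\|\tau_{h}u-u\|_{L^{s'}}\lesssim|h|\|\nabla u\|_{L^{s'}}$. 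The passage from this weighted $L^{2}$ control to the $L^{r_{1}}$ bound on $D$ is effected by the pointwise identity
\[
|D|^{r_{1}} = \bigl(|D|^{2}(|a|+|b|)^{p-2}\bigr)\cdot(|a|+|b|)^{r_{1}-p}, \qquad |D| \leq |a|+|b|,
\]
and Hölder's inequality, giving $\int|D|^{r_{1}}\lesssim|h|^{2}\cdot(\cdots)$; the residual $\|\nabla u\|_{L^{p}}^{r_{1}-p}$-type factor is absorbed via the basic energy inequality $\|\nabla u\|_{L^{p}}^{p-1}\lesssim\|f\|_{L^{p'}}$, which is what produces the exponent $r_{1}/(p-1)$ in the announced bound.

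The principal obstacle is carrying this analysis up to $\partial\Omega$. I would cover $\overline{\Omega}$ by finitely many boundary charts of class $C^{2,1}$ straightening $\partial\Omega$ and, in each chart, take only \emph{tangential} finite differences, so that the homogeneous Dirichlet datum is preserved under translation. The transformed equation is still of $(p,q)$-type but with variable coefficients inherited from the diffeomorphism, and the technical heart of the proof is to estimate the commutators between the finite-difference operator and the chart and to reabsorb the lower-order terms they produce into the coercive principal part by Young's inequality --- this is what the ``careful choice of local frame'' from the abstract refers to, and is why the boundary is assumed $C^{2,1}$ rather than merely $C^{2}$. The second normal derivative is recovered afterwards directly from the pointwise equation, once the tangential part of the Hessian is under control. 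Summing over the finite cover, reabsorbing lower-order norms, and combining with the energy bound then yields the stated Nikolskii estimate, with the four terms on the right-hand side reflecting the contributions of the duality step ($\|f\|_{W^{\sigma,s}}^{r_{1}/(p-1)}$), the energy inequality ($\|f\|_{L^{p'}}^{r_{1}/(p-1)}$), and the normal-direction recovery from the equation ($\|f\|_{L^{s}}^{s}+\|f\|_{L^{s}}^{2}$).
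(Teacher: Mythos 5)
Your existence/regularization framework is broadly compatible with the paper's (which also works with the nondegenerate operators $\Delta_p^\epsilon$, $\Delta_q^\epsilon$ and smooth approximate solutions, obtained in $W^{3,\tau}$ via Gilbarg--Trudinger rather than Lieberman), but the core of your a priori estimate would not produce the claimed exponent $r_1=s(p-2)+2$, and this is a genuine gap rather than a presentational difference. Testing with the first-order difference quotient $\eta^2(\tau_h u_\varepsilon-u_\varepsilon)$ and using monotonicity only yields control of $\int\eta^2|D|^2(|a|+|b|)^{p-2}$, i.e.\ the weight corresponding to $r=p$ (the classical $v_p\in W^{1,2}_{loc}$ level). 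To pass from this to $\int|D|^{r_1}\lesssim|h|^2$ via $|D|^{r_1}\le\bigl(|D|^2(|a|+|b|)^{p-2}\bigr)(|a|+|b|)^{r_1-p}$ you would need, after H\"older, an a priori bound on $(|a|+|b|)^{r_1-p}$ in a Lebesgue space far beyond what $\|\nabla u\|_{L^p}$ (or even $\|\nabla u\|_{L^q}$) provides; no such bound is available at this stage. Worse, your right-hand side decays only like $|h|^{1+\sigma}$ with $\sigma$ possibly just above $1/s\le 1/2$, which is strictly weaker than the $|h|^{2}$ decay required by the seminorm of $\mathcal{N}^{1+\frac{2}{r_1},r_1}$. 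The paper's mechanism is different in kind: it tests the equation with the second-order nonlinear quantity $-\Delta^\epsilon_{t_1+2}u$ with $t_1=r_1-p$ (Lemma \ref{leme1}), shows via Proposition \ref{prope1} that the resulting functional dominates $S_{r_1}=\int|\nabla(\nabla u(|\nabla u|^2+\epsilon)^{(r_1-2)/4})|^2$, and it is precisely the H\"older triple $(s,2,\tfrac{2s}{s-2})$ applied to $\int f_\epsilon\,\Delta^\epsilon_{t_1+2}u$ in Proposition \ref{primaryfractional} that forces $r_1=s(p-2)+2$; the $|h|^2$ decay then comes for free from $W^{1,2}$ membership of the vector field $V_{r_1}$ together with the pointwise inequality of Lemma \ref{lemp4}, not from any smoothness of the data.

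A secondary but related misreading: the hypothesis $\sigma>1/s$ is not used for a fractional gain in a duality pairing $\|\tau_h f-f\|_{L^s}\lesssim|h|^\sigma$. It is there to guarantee that $f$ has a well-defined trace in $L^s(\partial\Omega)$, because the paper controls the nonstandard boundary functional $F_{t_1}$ (arising from integrating $-\Delta^\epsilon_{t_1+2}u$ by parts) by showing, in the moving frame, that the boundary integrand reduces to $D_uG_u$ and is bounded by $|f_\epsilon|(|\nabla u|^2+\epsilon)^{(t_1+1)/2}$ on $\partial\Omega$ via the equation itself (Lemmas \ref{lemb2} and \ref{lemb3}), then closes the estimate with the trace embedding of Lemma \ref{lemb4}. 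Your alternative of flattening the boundary and taking tangential differences is a legitimate general strategy, but as sketched it inherits the same fatal exponent problem from the interior step, and the commutator/variable-coefficient analysis it requires is left entirely unquantified.
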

Now, we give a few comments on some results which are somehow connected to Theorem \ref{theorem1}. In \cite{bgop}, see also \cite{cpa}, the authors obtain fractional regularity for the solutions of a broad class of degenerate equations with nonsmooth coefficients which generalize the $p$-Laplacian case. Indeed, by means of a difference quotient approach and by supposing that the data relies on Besov spaces it is proved that these solutions  belong to a class of Besov spaces, locally see \cite[Theorems 1.1-1.3]{bgop}. Further, among other contributions, in \cite{elm} (see also \cite{com3}) the authors obtain sharp conditions for local higher integrability  and gradient H\"{o}lder continuity whereas, in \cite{bcm}, it is provided  a maximal regularity result for the associated solutions. In addition, in \cite{com1}, Theorem 1.5 and in \cite{com2}, Theorem 3.1, there are proved general fractional regularity results, which in particular hold, locally, for \eqref{eq1} in the case that $f=0$. More recently, in \cite{byo}, Lemma 3.7, it is provided, global fractional regularity also for the homogeneous case, when $\Omega$ is a ball. Moreover, in \cite{ckn1} and \cite{ckn2}, the authors prove sharp local regularity, i.e. in $W^{1,2}_{loc}$, for the vector field $v_p$. We stress that in the latter cases, the results are valid for a large class of equations, which in particular include \eqref{eq1}. Additionally, we remark that for the case where $f$ is a Borel measure, recently Avelin, Kuusi and Mingione have obtained optimal local fractional regularity for the solutions, see \cite{akm}.

Further, we remark that Theorem \ref{theorem1} complements the contributions given in \cite{dmm1} and \cite{dmm2} since we now consider a more general operator and, most of all, we cover Dirichlet boundary conditions, what demands some improvements of the technique which imply modifications on the estimates. Indeed, the boundary data coming from the fractional estimates for $p$-Laplacian-like operators is highly nonlinear and is handled by means of delicate estimates, involving boundary derivatives of order two combined with powers of the gradients on the boundary, see \cite{dmm1} or \cite{dmm2}. As it turns out, Neumann boundary conditions regularize the boundary functionals associated to solutions of Partial Differential Equations in two levels. First by canceling out nonlinear terms which depend on derivatives of order higher than one, and formally, by imposing to some regularity for the boundary derivatives for the solutions. In the Dirichlet case, all of this information is lost, so that in order to obtain the same sort of control on such nonlinear terms it was necessary to modify our approach, what alters some estimates. Actually, we employed the equation itself in order to control the boundary data, what allowed us to obtain a new set of a priori estimates for the Dirichlet case. We stress that, by technical reasons, this is  why we asked for extra regularity of the data $f$, to assure that we have control on the trace of $f$, see Lemma \ref{lemb2} below.

Complementarily, we stress that there are also interesting results concerning higher fractional differentiability to a large class of distinct equations. Without the intention of being complete, for instance, in the case of nonlocal equations, we cite the results obtained in \cite{kms,schi}. Indeed, in \cite{kms}, the authors provide a generalized version of the Gehring Lemma, and then in Theorem 1.1, prove local fractional regularity in terms of Sobolev-Slobodeckii spaces with the order of differentiability greater than one for the solutions of a broad class of equations. In addition, in \cite{schi}, by means of a generalization of the idea of differentiating the equation, the author obtains higher order fractional differentiability for the solutions of the so-called fractional $p$-Laplacian.

The next result exhibits the effect of the interaction between the $p$ and $q$-Laplacian on the regularity of the solutions. 
\begin{theorem}
\label{theorem2}
Under the hypotheses \eqref{hip1},\eqref{hip2} and \eqref{hip3},  suppose that $\beta>0$.
Thus the unique solution of \eqref{eq1} satisfies
\[u \in \mathcal{N}^{1+\frac{2}{r_2},r_2}(\Omega)\cap W^{1,q}_0(\Omega)\]
and
\[ \|u\|_{\mathcal{N}^{1+\frac{2}{r_2},r_2}(\Omega)}^{r_2} \leq C \bigg (\|f\|^{\frac{r_2}{q-1}}_{W^{\sigma,s}(\Omega)}+\|f\|^{\frac{r_2}{q-1}}_{L^{q^\prime}(\Omega)}+\|f\|^s_{L^s(\Omega)} +\|f\|^2_{L^s(\Omega)} \bigg) .\]
where  $C=C(N,p,q,s,\alpha,\beta,\sigma,\Omega)>0$ and $r_2=s(q-2)+2$.
\end{theorem}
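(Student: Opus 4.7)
The plan is to run the proof of Theorem \ref{theorem1} mutatis mutandis, but with the algebraic control now provided by the $q$-Laplacian rather than the $p$-Laplacian. Since the hypothesis $\beta>0$ is in force, the non-degenerate part of the operator is precisely the $q$-term, so the quantity whose increments drive the estimates becomes the auxiliary field $\mathcal{V}_q(\xi):=|\xi|^{(q-2)/2}\xi$ evaluated at $\nabla u$, in place of its $p$-analogue used in Theorem \ref{theorem1}. The $p$-Laplacian contribution is itself monotone and therefore adds only a non-negative term on the left-hand side of every energy-type identity; it is harmless throughout the argument, and the numerology changes from $r_1=s(p-2)+2$ to $r_2=s(q-2)+2$.

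Concretely, I would first approximate $u$ by $C^{2,\gamma}(\overline\Omega)$ solutions $u_\epsilon$ of a regularized version of \eqref{eq1} --- mollifying $f$ and replacing $|\nabla u|^{\tau-2}$ by $(\epsilon+|\nabla u|^2)^{(\tau-2)/2}$ for $\tau=p,q$ --- so that all manipulations with second difference quotients are rigorously justified. Taking as test function $D_{-h}(D_hu_\epsilon)$ in the interior, and its natural localized analogue in a $C^{2,1}$ boundary chart with $h$ tangent to the flattened boundary, and invoking the sharp monotonicity
\begin{equation*}
(|\xi|^{q-2}\xi-|\eta|^{q-2}\eta)\cdot(\xi-\eta)\ \geq\ c\,|\mathcal{V}_q(\xi)-\mathcal{V}_q(\eta)|^2,
\end{equation*}
one reaches a master estimate of the form
\begin{equation*}
\beta\int|D_h\mathcal{V}_q(\nabla u_\epsilon)|^2\ \leq\ C\int D_hf\cdot D_hu_\epsilon\ +\ (\text{boundary residual}),
\end{equation*}
whose volume right-hand side is handled by H\"older together with the inclusion $W^{\sigma,s}(\Omega)\hookrightarrow\mathcal{N}^{\sigma,s}(\Omega)$ --- giving $\|D_hf\|_{L^s}\lesssim|h|^\sigma\|f\|_{W^{\sigma,s}}$ --- and the ground-level energy bound $\|\nabla u_\epsilon\|_{L^q}\leq C\|f\|_{L^{q'}}^{1/(q-1)}$. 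To convert this weighted $L^2$-bound for $D_h\nabla u_\epsilon$ into the claimed Nikolskii estimate of order $1+\frac{2}{r_2}$ in $L^{r_2}$, I would use the pointwise inequality
\begin{equation*}
|D_h\nabla u|^{r_2}\ \leq\ C(|\nabla u|+|\nabla u(\cdot+h)|)^{(s-1)(q-2)}\,|D_h\mathcal{V}_q(\nabla u)|^{2},
\end{equation*}
(derived by multiplying the two-sided equivalence $|D_h\mathcal V_q|^2\sim(|\nabla u|+|\nabla u_h|)^{q-2}|D_h\nabla u|^2$ by the trivial upper bound $|D_h\nabla u|\leq|\nabla u|+|\nabla u_h|$ to the power $s(q-2)$), combined with H\"older and Young inequalities; the summands $\|f\|_{L^s}^s$ and $\|f\|_{L^s}^2$ in the final bound originate from Young absorption of cross-terms, while $\|f\|_{L^{q'}}^{r_2/(q-1)}$ accounts for the $W^{1,q}_0$ piece of the norm coming from the basic energy estimate.

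The hard part, just as for Theorem \ref{theorem1}, is the handling of the boundary residual: under Dirichlet conditions the difference-quotient argument leaves highly non-linear boundary integrals mixing second derivatives with powers of $\nabla u$ on $\partial\Omega$, and these cannot be made to vanish as they would in the Neumann case. Mimicking the strategy outlined in the introduction for Theorem \ref{theorem1}, I would use the equation itself on $\partial\Omega$ to rewrite normal derivatives in terms of tangential ones and of the trace of $f$; this is precisely where the assumption $\sigma>1/s$ in \eqref{hip2} enters crucially, via Lemma \ref{lemb2}, to guarantee that the trace of $f$ is well-defined. Once the boundary residual is absorbed, the uniform estimates obtained above pass to the limit $\epsilon\to 0^+$ by lower semi-continuity of the Nikolskii seminorm and standard compactness, yielding the asserted regularity and estimate.
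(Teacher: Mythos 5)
Your overall architecture (regularize, derive a master energy inequality driven by the $q$-part, convert to a Nikolskii bound, absorb a Dirichlet boundary residual via the equation and the trace of $f$) matches the paper in spirit, but the core estimate you propose does not deliver the claimed order of regularity, and this is a genuine gap rather than a stylistic difference. Your master estimate reads $\beta\int|D_h\mathcal{V}_q(\nabla u_\epsilon)|^2\leq C\int D_hf\cdot D_hu_\epsilon+\dots$, and you bound the right-hand side by $\|D_hf\|_{L^s}\|D_hu_\epsilon\|_{L^{s'}}$ using $\|\tau_hf\|_{L^s}\lesssim|h|^{\sigma}\|f\|_{W^{\sigma,s}}$. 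Since $D_hf=\tau_hf/|h|$, this gives $\|D_hf\|_{L^s}\lesssim|h|^{\sigma-1}\|f\|_{W^{\sigma,s}}$, which blows up as $h\to0$ because \eqref{hip2} only requires $\sigma>\tfrac1s$, not $\sigma=1$. The resulting bound $\int|\tau_h\mathcal{V}_q(\nabla u)|^2\lesssim|h|^{1+\sigma}$ places $\mathcal{V}_q(\nabla u)$ in a Nikolskii class of order $(1+\sigma)/2<1$, whereas the statement $u\in\mathcal{N}^{1+\frac{2}{r_2},r_2}$ requires the full decay $\int|\tau_h\nabla u|^{r_2}\lesssim|h|^2$, i.e.\ essentially a complete extra derivative of the nonlinear field. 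The paper never differentiates $f$: it tests the smooth approximate equation with the second-order object $-\Delta^\epsilon_{t_2+2}u$ (legitimate because Lemma \ref{approximatesolutions} gives $u_\epsilon\in W^{3,\tau}$, $\tau>N$), obtains the identity of Lemma \ref{leme1} with the functionals $I_{q,\beta,t_2}$, $I_{p,\alpha,t_2}$ and the boundary term $F_{t_2}$, bounds $I_{t_2}\geq CS_{r_2}$ via Proposition \ref{prope2}, and absorbs $\int f_\epsilon(-\Delta^\epsilon_{t_2+2}u)$ by H\"older and Young into $S_{r_2}$; the $W^{\sigma,s}$ hypothesis is used only for the trace $\|f_\epsilon\|_{L^s(\partial\Omega)}$ in the boundary estimate (Lemma \ref{lemb3} combined with Lemma \ref{lemb4}), not to take increments of $f$.

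A secondary problem lies in your conversion step. The inequality $|\tau_h\nabla u|^{r_2}\leq C(|\nabla u|+|\nabla u(\cdot+h)|)^{(s-1)(q-2)}|\tau_h\mathcal{V}_q(\nabla u)|^{2}$ is fine pointwise (for plain differences $\tau_h$, not for the quotients $D_h$, for which the ``trivial upper bound'' you invoke is false by a factor $|h|^{-1}$), but to integrate it you would need H\"older with some exponent strictly greater than $1$ on the factor $|\tau_h\mathcal{V}_q(\nabla u)|^{2}$, while your master estimate only controls its $L^1$ norm; without an a priori $L^\infty$ or higher-integrability bound on $\nabla u$ the step does not close. The paper sidesteps this entirely by working from the start with the field $v_{r_2}=\nabla u(|\nabla u|^2+\epsilon)^{(r_2-2)/4}$, whose weight already encodes the exponent $s(q-2)$, and by controlling $\|\nabla v_{r_2}\|_{L^2}^2=S_{r_2}$ directly, after which Lemma \ref{lemp4} yields the $\mathcal{N}^{1+\frac{2}{r_2},r_2}$ seminorm in one stroke. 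To repair your argument you would have to abandon the difference quotient on $f$ and pass to the second-order test function, at which point you would essentially be reproducing Proposition \ref{secondaryfractional}(a).
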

 The last result is a counterpart of Theorem \ref{theorem1} since it guarantees a new level of smoothness for the solutions. Indeed, a distinct type of fractional regularity appears when $\beta>0$, namely, $\mathcal{N}^{1+\frac{2}{r_2},r_2}(\Omega)$, which improves the result of Theorem \ref{theorem1} to a different direction, i.e., the regularity evolves to a different level when contrasted to the previous one since, in general, $\mathcal{N}^{1+\frac{2}{r_i},r_i}(\Omega)\not\hookrightarrow\mathcal{N}^{1+\frac{2}{r_j},r_j}(\Omega)$ and $\mathcal{N}^{1+\frac{2}{r_i},r_i}(\Omega) \not\hookrightarrow\mathcal{N}^{1+\frac{2}{r_j},r_j}(\Omega)$ for $i\neq j$.
 Also, remark that in an analogous manner to the $(p,2)$-Laplacian case, for instance, see \cite{bmp2}, Proposition 1, the perturbation of the $p$-Laplacian by a $q$-Laplacian gives some sort of ramification for the global regularity of the solutions.

Finally, our last result provides a measure of the interference in the regularity of the solutions given by the interactions between the parameters $p,q,$ and $s$. Indeed, as it turned out, rather than a genuine replacement of regularity, the appearance of another quasilinear degenerate differential operator gives rise to several degrees of regularity as a counterpart to the $p$-$q$ growth unbalance.

\begin{theorem}
\label{theorem3}
Under the hypotheses \eqref{hip1},\eqref{hip2} and \eqref{hip3},  suppose that $\beta>0$.
\begin{itemize}
\item[(a)] If $s\geq \frac{q+p-4}{p-2}$ then the unique solution of \eqref{eq1} satisfies
\[u \in \mathcal{N}^{1+\frac{2}{r_3},r_3}(\Omega)\]
and
\[ \|u\|_{\mathcal{N}^{1+\frac{2}{r_3},r_3}(\Omega)}^{r_3} \leq C \bigg (\|f\|^{\frac{r_3}{q-1}}_{W^{\sigma,s}(\Omega)}+\|f\|^{\frac{r_3}{q-1}}_{L^{q^\prime}(\Omega)}+\|f\|^\tau_{L^s(\Omega)} +\|f\|^2_{L^s(\Omega)} \bigg) .\]
where  $r_3=r_1+q-p$ and $\tau=\frac{s(p-2)+q-p}{q-2}$.
\item[(b)]
If \[p<q<p+1 \mbox{ and }s\geq 1 + \dfrac{1}{1+p-q},\] suppose in addition that $f \in L^\rho(\Omega)$, where $\rho=\frac{s(q-2)+p-q}{p-2}$. Then the unique of \eqref{eq1} satisfies
\[u \in \mathcal{N}^{1+\frac{2}{r_4},r_4}(\Omega)\]
and
\[ \|u\|_{\mathcal{N}^{1+\frac{2}{r_4},r_4}(\Omega)}^{r_4} \leq C \bigg (\|f\|^{\frac{r_4}{p-1}}_{W^{\sigma,s}(\Omega)}+\|f\|^{\frac{r_4}{p-1}}_{L^{p^\prime}(\Omega)}+\|f\|^\rho_{L^\rho(\Omega)} +\|f\|^2_{L^s(\Omega)} \bigg) .\]
where  $r_4=r_2+p-q.$
\end{itemize}
In the latter inequalities, $C=C(N,p,q,s,\alpha,\beta,\sigma,\Omega)>0$.
\end{theorem}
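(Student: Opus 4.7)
The plan is to follow the same machinery developed for Theorems \ref{theorem1} and \ref{theorem2}: approximate \eqref{eq1} by a sequence of non-degenerate regularizations with smooth solutions $u_\epsilon$, apply the Nirenberg finite-difference method, exploit the monotonicity of the $p$- and $q$-Laplacian, handle the Dirichlet boundary terms through the local-frame choice together with Lemma \ref{lemb2}, and pass to the limit $\epsilon\to 0^+$. The core of this machinery produces the joint energy inequality
\[
\alpha\int\eta^2\delta^2\Sigma^{p-2}+\beta\int\eta^2\delta^2\Sigma^{q-2}\le C|h|^2\,\mathcal R(f,u_\epsilon),
\]
where $\delta=|\nabla(u_\epsilon)_h-\nabla u_\epsilon|$, $\Sigma=|\nabla(u_\epsilon)_h|+|\nabla u_\epsilon|$, and $\mathcal R$ is an explicit function of $\|f\|_{W^{\sigma,s}}$, $\|f\|_{L^{p'}}$, $\|f\|_{L^{q'}}$ and $W^{1,p}\cap W^{1,q}$-seminorms of $u_\epsilon$, which I would convert to pure data norms via the energy estimate tested against $u_\epsilon$.

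The new ingredient for Theorem \ref{theorem3} is the Nikolskii extraction. For (a), starting from $\delta^{r_3}\le\delta^2\Sigma^{r_3-2}$ and the algebraic identity $r_3-2=(q-2)+(s-1)(p-2)$, I would split $\Sigma^{r_3-2}=\Sigma^{q-2}\cdot\Sigma^{(s-1)(p-2)}$ and apply Hölder, matching $\int\delta^2\Sigma^{q-2}$ (bounded by $C|h|^2$ from the energy step) against a Lebesgue integral of the form $\int\Sigma^{s(p-2)}=\int\Sigma^{r_1-2}$, which is controlled by the $L^{r_2}$-bound on $\nabla u_\epsilon$ already available from Theorem \ref{theorem2} (since $r_1-2\le r_2$). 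The hypothesis $s\ge(q+p-4)/(p-2)$ is precisely the borderline condition $(s-1)(p-2)\ge q-2$ that makes this splitting non-trivial with both factors having non-negative exponents, and is responsible for the output exponent $\tau=(r_3-2)/(q-2)$ displayed in the statement; the powers $r_3/(q-1)$ emerge from converting $\|\nabla u_\epsilon\|_{L^q}$ to $\|f\|_{L^{q'}}$ via Poincaré-duality applied to the equation. Part (b) is the symmetric decomposition $r_4-2=(p-2)+(s-1)(q-2)$, with $\Sigma^{r_4-2}=\Sigma^{p-2}\cdot\Sigma^{(s-1)(q-2)}$ Hölder-paired against $\int\delta^2\Sigma^{p-2}$; the constraint $p<q<p+1$ keeps the auxiliary exponents admissible, and the extra assumption $f\in L^\rho$, $\rho=(s(q-2)+p-q)/(p-2)$, supplies exactly the additional integrability needed to bound the companion $\Sigma$-integral in this regime, since the $p$-energy inequality does not on its own control $\int\Sigma^{r_4-2}$ when $r_4-2>p$.

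The main obstacle is the bookkeeping that makes the Hölder exponents match the desired output norms exactly: the thresholds $s\ge(q+p-4)/(p-2)$ in (a), and the pair $\{p<q<p+1,\,s\ge 1+1/(1+p-q)\}$ together with the $L^\rho$-integrability in (b), appear as the borderline cases for admissibility of this splitting, and one must produce the $W^{\sigma,s}$- and $L^s$-powers with precisely the exponents $r_3/(q-1)$, $\tau$, $r_4/(p-1)$, $\rho$ stated rather than the looser combination that a generic interpolation between Theorems \ref{theorem1} and \ref{theorem2} would yield. The remainder of the argument — approximation, monotonicity, boundary via local frames, and passage to the limit through convergence of the vector fields $v_r=|\nabla u|^{r-2}\nabla u$ — is a direct reuse of the tools already deployed in the proofs of Theorems \ref{theorem1}-\ref{theorem2}.
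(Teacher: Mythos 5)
There is a genuine gap in the step you call the ``Nikolskii extraction''. You propose to bound $\int\delta^{2}\Sigma^{r_3-2}$ by H\"older-pairing $\int\delta^{2}\Sigma^{q-2}\le C|h|^{2}$ against $\int\Sigma^{s(p-2)}$. But any H\"older splitting with finite conjugate exponents of the form
\[
\int\delta^{r_3}\le\int\bigl(\delta^{2}\Sigma^{q-2}\bigr)^{\theta}\,\Sigma^{r_3-q\theta}\le\Bigl(\int\delta^{2}\Sigma^{q-2}\Bigr)^{\theta}\Bigl(\int\Sigma^{\frac{r_3-q\theta}{1-\theta}}\Bigr)^{1-\theta}\le C\,|h|^{2\theta}\cdot(\cdots)
\]
degrades the rate in $|h|$ to $|h|^{2\theta}$ with $\theta<1$ (the choice $\theta=1$ forces $r_3=q$, which fails since $r_3-q=(s-1)(p-2)>0$; the choice $a=1$ in a direct H\"older pairing forces the companion factor into $L^{\infty}$, and no global gradient bound is available). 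This yields at best $\mathcal{N}^{1+\frac{2\theta}{r_3},r_3}$, not the claimed $\mathcal{N}^{1+\frac{2}{r_3},r_3}$. The same objection applies verbatim to part (b). The thresholds $s\ge\frac{q+p-4}{p-2}$ and $f\in L^{\rho}$ also play a different role than you assign them: in the paper they are not admissibility conditions for an interpolation but sign/integrability conditions needed to absorb the source term $\int f_\epsilon\,\Delta^{\epsilon}_{t+2}u$ (whence the exponents $\tau$ and $\rho$) and, together with $p<q<p+1$, to make the boundary imbedding of Lemma \ref{lemb4} work for $i=4$.

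The paper avoids interpolation entirely: it tests \eqref{eq1ap} with $-\Delta^{\epsilon}_{t_1+2}u$ (respectively $-\Delta^{\epsilon}_{t_2+2}u$), so the weight $(|\nabla u|^{2}+\epsilon)^{t_1/2}$ is already built into the test function. Since $q-2+t_1=r_3-2$, the $q$-part of the operator is then \emph{directly} coercive at the level $S_{r_3}=\int|\nabla(\nabla u(|\nabla u|^{2}+\epsilon)^{\frac{r_3-2}{4}})|^{2}$ (Proposition \ref{prope1}), i.e.\ the vector field $v_{r_3}$ lands in $W^{1,2}$ with a quantitative bound; symmetrically the $p$-part controls $S_{r_4}$ (Proposition \ref{prope2}). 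The full $|h|^{2}$ decay then follows from the pointwise inequality of Lemma \ref{lemp4} applied to $v_{r_3}$ (Lemma \ref{lemp5}), with no loss. If you wish to run a finite-difference version, the analogue would be to difference against a \emph{weighted} increment carrying $(|\nabla u|^{2}+\epsilon)^{t_1/2}$, so that the weight $\Sigma^{r_3-2}$ appears on the left of the energy inequality from the outset rather than being manufactured afterwards by H\"older.
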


To the best of our knowledge, Theorem \ref{theorem3} is entirely new, since it provides information on the existence of two additional levels of smoothness for the solutions of \eqref{eq1} which arise from the $p$-$q$ unbalance and its interaction with $s$ for the case of fixed ellipticity. It seems to describe some sort of ramification phenomenon, at least regarding fractional regularity, since after the threshold parameters
\[\frac{q+p-4}{p-2} \mbox { or } 1 + \dfrac{1}{1+p-q}\]
the last one for the case where $p<q<p+1$, the solutions gain regularity. Is is interesting that despite that \eqref{eq1} does not possess the Lavrentiev phenomenon, still under the assumption that $p$ and $q$ are close enough, new regularity and a priori estimates appear. This is consistent with the results related to more general operators, see for instance \cite{byo} or \cite{com1,com3,com2}. Moreover, remark that under our assumptions, $2\leq \tau \leq s$ whereas $2\leq s \leq \rho$, and $\rho=\tau=s$ if $p=q$. Thus, once again the case $q>p$ gives the impression of possessing a certain degree of ramification in what regards the regularity of the solutions. Remark that this is also reflected in the terms which depend of $f$ in the last a priori estimates.

The present paper is organized as follows. Section \ref{preliminary} concerns the basic tools on Fractional Regularity, namely the interplay between Fractional Spaces and degenerate operators, combined with certain approximation results. In Section \ref{boundaryestimates}, we address certain highly nonlinear boundary terms which are linked to the energy bounds considered along the text. Further, Section \ref{energyestimates} is devoted to the discussion of a new set of a priori estimates which are the basis of our entire method and part of our main contributions, and finally, in Section \ref{mainresults} we provide our ultimate proofs which guarantee the validity of the main results of this paper.

\section{Preliminary results}
\label{preliminary}
For the sake of clarity, we exhibit below some basic tools of Fractional Regularity which are going to be exploited in the present paper. Most of the results below are merely direct adaptations of well-known results, which will play important roles in the present paper. Namely, we will present some of the basic theory on Fractional Spaces which we employ to investigate the fractional regularity of the solutions, what includes obtaining adequate approximate smooth solutions. For the convenience of the reader and/or since some of these results are interesting in their own right, we exhibit some of its proofs and omit others.

\subsection{Basic Tools for Fractional Regularity}

In what follows, for the sake of convenience and further reference, we state a basic, however, important inequality. Its proof follows by completely analogous arguments of those used in \cite{els} or \cite{dmm2}. The most important character of this inequality it is its invariability with respect to $\epsilon$.  

\begin{lemma}
\label{lemp4}
Let \(r\geq 2\). Then there exists a constant \(C=C(r)\) such that, for all \(\epsilon>0\), we have
 \begin{equation*}
  |U-V|^r\leq C\left|U(|U|^2+\epsilon)^{\frac{r-2}{4}}-V(|V|^2+\epsilon)^{\frac{r-2}{4}}\right|^2, \forall \  U \mbox { and } V \in \mathbb{R}^N.
 \end{equation*}
\end{lemma}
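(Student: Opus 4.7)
The plan is to interpret the inequality as a strong monotonicity bound for the vector field $F_\epsilon \colon \mathbb{R}^N \to \mathbb{R}^N$ defined by $F_\epsilon(W) := W(|W|^2+\epsilon)^{(r-2)/4}$. Since $r \ge 2$ and $\epsilon > 0$, $F_\epsilon$ is of class $C^1$ on $\mathbb{R}^N$, and I would first compute its Jacobian explicitly as
\[
DF_\epsilon(W) = (|W|^2+\epsilon)^{(r-2)/4}\left[I + \tfrac{r-2}{2}\,\tfrac{W\otimes W}{|W|^2+\epsilon}\right],
\]
and observe that for $r \ge 2$ this matrix is symmetric positive definite, with eigenvalues $(|W|^2+\epsilon)^{(r-2)/4}$ in the hyperplane orthogonal to $W$ and $(|W|^2+\epsilon)^{(r-2)/4}\bigl[1 + \tfrac{r-2}{2}|W|^2/(|W|^2+\epsilon)\bigr]$ along $W$. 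In particular, the smallest eigenvalue is bounded below by $(|W|^2+\epsilon)^{(r-2)/4} \ge |W|^{(r-2)/2}$, a bound that is already independent of $\epsilon$.

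Next, I would parametrize the segment from $V$ to $U$ by $W(t) := V + t(U-V)$ and invoke the fundamental theorem of calculus to write
\[
F_\epsilon(U) - F_\epsilon(V) = \left(\int_0^1 DF_\epsilon(W(t))\,dt\right)(U-V).
\]
Since the integrated matrix inherits the pointwise lower bound on its smallest eigenvalue, this produces
\[
|F_\epsilon(U) - F_\epsilon(V)| \ge \left(\int_0^1 |W(t)|^{(r-2)/2}\,dt\right)|U-V|,
\]
which reduces the problem to a one-dimensional integral estimate along the segment.

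The heart of the argument is then an elementary geometric lemma stating that
\[
\int_0^1 |W(t)|^{(r-2)/2}\,dt \ge c_r\, |U-V|^{(r-2)/2}
\]
for some $c_r > 0$. To prove it I would set $M := \max(|U|,|V|)$, note $M \ge |U-V|/2$, and suppose without loss of generality that $|V| = M$; the reverse triangle inequality then yields $|W(t)| \ge |V| - t|U-V| \ge M/2$ for all $t \in [0, 1/4]$, so a direct integration over that subinterval delivers the claim with a constant depending only on $r$. Combining everything, I would conclude $|F_\epsilon(U) - F_\epsilon(V)|^2 \ge c_r^2\, |U-V|^r$, which is the desired inequality. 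The one technical point to watch, and the main obstacle in the sense that it dictates the entire structure, is the need to keep the constant $\epsilon$-independent; this is achieved precisely by discarding $\epsilon$ in favor of $|W|$ in the eigenvalue bound and by the $\epsilon$-free nature of the geometric lemma. Otherwise the argument is modeled on the classical monotonicity estimates for $p$-Laplacian-type nonlinearities mentioned in \cite{els,dmm2}.
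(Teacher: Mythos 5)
Your proof is correct, and it is essentially the argument the paper relies on: the paper omits the proof of Lemma \ref{lemp4} entirely and defers to \cite{els} and \cite{dmm2}, where the same integral-representation/strong-monotonicity estimate for the vector field $W \mapsto W(|W|^2+\epsilon)^{\frac{r-2}{4}}$ is carried out, with the $\epsilon$-independence of the constant obtained exactly as you do, by discarding $\epsilon$ in the eigenvalue lower bound and using the $\epsilon$-free segment estimate. All steps (the Jacobian computation, the WLOG via the substitution $t\mapsto 1-t$, and the bound $M\geq |U-V|/2$) check out, so there is no gap.
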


Now, we are in the position to prove certain standard imbedding inequalities for Nikolskii spaces in terms of nonlinearities which are naturally related to the solutions of \eqref{eq1}, see \cite{dmm1}. On behalf of the reader's convenience, we present its proof.

\begin{lemma}
\label{lemp5}
Consider $r\geq p$. There exists $C=C(N,p,\Omega)>0$ for which
\begin{equation}
\nonumber \|u\|^r_{\mathcal{N}^{1+\frac{2}{r},r}}\leq C\bigg(\bigg\|\nabla\bigg(\nabla u(|\nabla u|^2+\epsilon)^{\frac{r-2}{4}}\bigg)\bigg\|_{L^2}^2+\|u\|_{W^{1,p}}^r\bigg)
\end{equation}
for all $u\in W^{3,\tau}$, where $\tau>N$.
If $r\geq q$ then the same result holds if we replace $W^{1,p}$ by $W^{1,q}$.
\end{lemma}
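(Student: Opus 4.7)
The plan is straightforward and rests on two ingredients: the pointwise inequality in Lemma \ref{lemp4} and the standard $L^2$-finite-difference characterization of $W^{1,2}$.

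First, I would set $w(x) := \nabla u(x)\bigl(|\nabla u(x)|^2+\epsilon\bigr)^{(r-2)/4}$. Since $u\in W^{3,\tau}$ with $\tau>N$, Morrey's embedding gives $u\in C^{2,\alpha}$, so $\nabla u$ and $\nabla^2 u$ are bounded; a direct computation via the chain rule then shows $w\in W^{1,\infty}(\Omega)\subset W^{1,2}(\Omega)$, and in particular the quantity on the right-hand side is finite and well-defined. Then I would apply Lemma \ref{lemp4} with the choices $U=\nabla u(x+h)$ and $V=\nabla u(x)$ to obtain, pointwise for $x\in\Omega_{|h|}$,
\[
|\nabla u(x+h)-\nabla u(x)|^{r}\;\leq\; C\,|w(x+h)-w(x)|^{2}.
\]

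Next, I would divide by $|h|^{2}$, integrate over $\Omega_{|h|}$, and invoke the classical finite-difference bound
\[
\int_{\Omega_{|h|}}|w(x+h)-w(x)|^{2}\,dx \;\leq\; |h|^{2}\,\|\nabla w\|_{L^{2}(\Omega)}^{2},
\]
valid because $w\in W^{1,2}(\Omega)$. This yields
\[
\int_{\Omega_{|h|}}\frac{|\nabla u(x+h)-\nabla u(x)|^{r}}{|h|^{2}}\,dx \;\leq\; C\,\bigg\|\nabla\bigl(\nabla u(|\nabla u|^{2}+\epsilon)^{(r-2)/4}\bigr)\bigg\|_{L^{2}}^{2},
\]
with a constant independent of $h$. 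Taking the supremum over $|h|>0$ controls $\llbracket u\rrbracket_{\mathcal{N}^{1+2/r,r}}^{\,r}$ by the desired quantity.

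Finally, I would conclude by adding $\|u\|_{W^{1,p}}^{r}$ to both sides and using the elementary inequality $(a+b)^{r}\leq 2^{r-1}(a^{r}+b^{r})$ together with the definition of the Nikolskii norm (case $i=1$, since $r\geq p$). For the second assertion, where $r\geq q$, exactly the same argument applies verbatim with $W^{1,p}$ replaced by $W^{1,q}$ in the norm definition (case $i=2$). There is no real obstacle here: the only subtlety is making sure $w\in W^{1,2}$ so that the finite-difference step is legitimate, and this is ensured by the hypothesis $u\in W^{3,\tau}$, $\tau>N$, which is exactly the role this regularity assumption plays in the statement.
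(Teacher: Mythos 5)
Your proposal is correct and follows essentially the same route as the paper: the pointwise inequality of Lemma~\ref{lemp4} applied to $U=\nabla u(x+h)$, $V=\nabla u(x)$, followed by the $L^2$ finite-difference bound for $v=\nabla u\,(|\nabla u|^2+\epsilon)^{(r-2)/4}\in W^{1,2}$, which controls the Nikolskii seminorm. The only cosmetic difference is the lower-order term: the paper absorbs it via the interpolation inequality $\|u\|^r_{W^{1,r}}\leq \eta\|u\|^r_{\mathcal{N}^{1+2/r,r}}+K\|u\|^r_{W^{1,p}}$, whereas you use directly that the norm adopted here is $\|u\|_{W^{1,p}}+\llbracket u\rrbracket_{\mathcal{N}^{1+2/r,r}}$, which works just as well.
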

\begin{proof}
Consider the Nikolskii seminorm
\[\llbracket u\rrbracket^r_{\mathcal{N}^{1+\frac{2}{r},r}}=\sup_{h}\int_{\Omega_{|h|}}\dfrac{|\nabla u (x+h)-\nabla u (x)|^r}{|h|^2},\]
where $\Omega_{|h|}=\{x\in \Omega: \mbox{d}(x,\partial\Omega)>|h|\}$. By Lemma \ref{lemp4}, it is clear that
\begin{align}
\label{eq2lemp5}
\llbracket u \rrbracket^r_{\nr} &\leq C \sup_{h}\int_{\Omega_{|h|}} \dfrac{|v(x+h)-v(x)|^2}{|h|^2}
\\
\nonumber
&\leq C \int_\Omega |\nabla v|^2,
\end{align}
for $v=\nabla u \big(\big(|\nabla u|^2+\epsilon\big)^{\frac{r-2}{4}}\big) \in W^{1,2}(\Omega)$.

Moreover, remark that by employing basic interpolation arguments, given $\eta>0$, there exists $K=K(N,p,r,\Omega)>0$ such that
\begin{equation}
\label{eq3lemp5}
\|u\|^r_{W^{1,r}}\leq \eta \|u\|^r_{\nr}+K\|u\|^r_{W^{1,p}}.
\end{equation}

Thus, the result follows by setting $\eta=1/2^{r+1}$ and by combining \eqref{eq2lemp5} and \eqref{eq3lemp5}.

\end{proof}

\subsection{Approximations}
\label{approximations}
In this subsection, we briefly review certain known results on the existence of {\it smooth} solutions to approximate versions of \eqref{eq1}. 

For this purpose, we are going to make use of the following approximations. Indeed, let us introduce
\begin{equation}
\label{eq1bd}
\Delta_p^\epsilon u = \mbox{div} \bigl ( (|\nabla u|^2+\epsilon)^{(p-2)/2} \nabla u \bigr ), \quad p > 2,
\end{equation}
a perturbation of the $p$-Laplacian, where $\epsilon \in (0,1]$ stands for the degeneracy parameter and $\Delta_q^\epsilon u$ is defined in an analogous manner.
Moreover, set
\begin{equation}
\label{eq2bd}
f_\epsilon \in C^{\infty}(\overline{\Omega}) \mbox{ such that }f_\epsilon \to f \mbox{ in }W^{\sigma,s}(\Omega), \mbox{ as }\epsilon \to 0.
\end{equation}

In this fashion, we consider below quasilinear approximation of our Dirichlet \eqref{eq1} 
boundary value problem.
As a matter of fact, let
\begin{equation}
  \label{eq1ap}
    \tag{$\mathcal{D}_\epsilon$}
  \begin{cases}
   -\alpha\Delta_p^\epsilon u- \beta \Delta_q^\epsilon u  & = f_\epsilon  \text{ in } \Omega\\
   \nonumber
   \quad \quad \quad \quad \quad \ \quad u&=0    \text{ on } \partial \Omega
   \end{cases}
   \end{equation}
the nondegenerate approximations of \eqref{eq1} 
where $\Delta_p^\epsilon$, $\Delta_q^\epsilon$ and $f_\epsilon$ were defined in \eqref{eq1bd} and \eqref{eq2bd}.

There is a vast literature on the existence of smooth solutions to \eqref{eq1ap} and related problems. For instance, in a chronological order, we cite \cite{lady}, specially Chapters 4 and 10, \cite{grisvard}, for instance Chapter 2, \cite{lie1}, \cite{lie2}, and \cite{gtr}, specially Chapters 7. However, since in the present case we deal with a $(p,q)$-nonlinear term, for the reader convenience we provide the details of the proofs for existence results in $W^{3,\tau}(\Omega)$, where $\tau>N$, which are appropriate for the present approach.

\begin{lemma}\label{approximatesolutions}Consider $\epsilon \in (0,1]$. Under the hypotheses \eqref{hip1} and \eqref{hip3}, given $\tau>N$,
there exists a unique $u\in W^{3,\tau}$ solution of \eqref{eq1ap}. Moreover,
\[ \|u\|_{W^{3,\tau}}\leq C\| f_\epsilon\|_{L^\infty},\]
where $C=C(N,\alpha,\beta,\epsilon,\tau,\Omega)>0$
\end{lemma}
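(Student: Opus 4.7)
The plan is to follow a standard bootstrap that exploits the complete non-degeneracy of the $\epsilon$-regularization together with the smoothness of $f_\epsilon$ and $\partial\Omega$, in the spirit of the references cited just above the statement. First, I would obtain a unique weak solution of \eqref{eq1ap} by monotone operator theory: the operator $Tu=-\alpha\Delta_p^\epsilon u-\beta\Delta_q^\epsilon u$ is continuous, coercive, and strictly monotone on $W^{1,q}_0(\Omega)$ if $\beta>0$ and on $W^{1,p}_0(\Omega)$ if $\beta=0$, so the Browder--Minty theorem supplies a unique $u$ satisfying $Tu=f_\epsilon$ in the weak sense. The same strict monotonicity will give uniqueness at every subsequent regularity level, and the weak maximum principle produces $\|u\|_{L^\infty}\leq C(\epsilon,\Omega)\|f_\epsilon\|_{L^\infty}$.

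The decisive step is to show $u\in C^{1,\alpha}(\overline\Omega)$. Because $f_\epsilon\in C^\infty(\overline\Omega)$, $\partial\Omega\in C^{2,1}$, and the $\epsilon$-regularized $(p,q)$-nonlinearity satisfies the structure conditions of the quasilinear theory with general growth developed in \cite{lie1,lie2}, one obtains a global $C^{1,\alpha}$ estimate with constants depending on $\epsilon$. The relevant feature is that $\epsilon>0$ renders the principal symbol smooth and makes the problem uniformly elliptic on every bounded range of $\nabla u$.

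Once $\|\nabla u\|_{L^\infty}$ is under control, the equation can be rewritten in non-divergence form
\[ a_{ij}(\nabla u)\,\partial_{ij}u = f_\epsilon,\]
with
\[ a_{ij}(\xi)=\alpha\bigl[(|\xi|^2+\epsilon)^{\frac{p-2}{2}}\delta_{ij}+(p-2)(|\xi|^2+\epsilon)^{\frac{p-4}{2}}\xi_i\xi_j\bigr]+\beta\bigl[(|\xi|^2+\epsilon)^{\frac{q-2}{2}}\delta_{ij}+(q-2)(|\xi|^2+\epsilon)^{\frac{q-4}{2}}\xi_i\xi_j\bigr].\]
Since $\epsilon>0$ the map $\xi\mapsto a_{ij}(\xi)$ is $C^\infty$, and since $u\in C^{1,\alpha}(\overline\Omega)$ the coefficients $a_{ij}(\nabla u)$ lie in $C^{0,\alpha}(\overline\Omega)$ and define a uniformly elliptic operator. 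The Agmon--Douglis--Nirenberg $L^\tau$-theory for linear elliptic equations on the $C^{2,1}$ domain $\Omega$ (cf.~\cite{gtr,grisvard}) then yields $u\in W^{2,\tau}(\Omega)$ for every $\tau<\infty$.

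To pass from $W^{2,\tau}$ to $W^{3,\tau}$, I would differentiate the non-divergence equation in $x_k$: the function $w=\partial_k u$ satisfies
\[ a_{ij}(\nabla u)\,\partial_{ij}w = \partial_k f_\epsilon - (\partial_{\xi_\ell} a_{ij})(\nabla u)\,\partial_{k\ell}u\,\partial_{ij}u,\]
whose coefficients lie in $W^{1,\tau}\hookrightarrow C^{0,\alpha}(\overline\Omega)$ for $\tau>N$ and whose right-hand side is in $L^\tau(\Omega)$. The boundary values of $w$ are handled in the standard way by flattening $\partial\Omega$: the tangential components of $\nabla u$ vanish on $\partial\Omega$ and the normal component is recovered from the equation itself, yielding prescribed data of the right Besov class on $\partial\Omega$. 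A final application of the linear $L^\tau$-theory gives $w\in W^{2,\tau}$, i.e., $u\in W^{3,\tau}$, and composing the constants through all the previous steps produces the asserted bound $\|u\|_{W^{3,\tau}}\leq C\|f_\epsilon\|_{L^\infty}$ with $C=C(N,\alpha,\beta,\epsilon,\tau,\Omega)$. The main obstacle is the global $C^{1,\alpha}$ estimate up to $\partial\Omega$, namely the correct invocation of the Lieberman-type theory for the $\epsilon$-regularized $(p,q)$-operator; once that bound is secured, everything that follows is a routine linear bootstrap.
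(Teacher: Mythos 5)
Your proposal is correct and arrives at the same conclusion by a more granular version of the same strategy. The paper's proof verifies the structure conditions (10.2), (10.5), (10.23) and (15.82) of \cite{gtr} for the field $A(v)=D(v)v$ with $D(v)=\alpha(|v|^2+\epsilon)^{(p-2)/2}+\beta(|v|^2+\epsilon)^{(q-2)/2}$ and then invokes Theorem 15.11 of \cite{gtr} to produce a $C^{2,\gamma}(\overline{\Omega})$ solution in a single stroke; that one citation packages your Browder--Minty existence step, the maximum principle, the global gradient bound, and the Lieberman-type $C^{1,\alpha}$ estimate. The final step is the same in substance: the paper applies the linear higher-regularity theorem (Theorem 9.19 of \cite{gtr} with $k=1$) to the linearized equation, which is precisely your ``differentiate in $x_k$ and apply the $L^\tau$-theory to $w=\partial_k u$'' argument, including the tangential/normal treatment of the boundary data. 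Your decomposition buys a clearer accounting of which constants blow up as $\epsilon\to 0$ (only the ellipticity ratio) and starts from the weak solution that actually survives in the limit problem; the paper's buys brevity by citing a packaged quasilinear existence theorem. One caveat common to both arguments: producing third derivatives requires $\partial_k f_\epsilon$ on the right-hand side, so the stated bound $\|u\|_{W^{3,\tau}}\leq C\|f_\epsilon\|_{L^\infty}$ should be understood with $C$ implicitly controlling higher norms of the (smooth, $\epsilon$-dependent) datum $f_\epsilon$; this is harmless since the lemma is used only qualitatively and the $\epsilon$-uniform estimates come from Section \ref{energyestimates}.
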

\begin{proof}
As the proof of the uniqueness is readily obtained by well-know monotonicity arguments, it is left to the reader.

Thus, we can focus at existence and regularity of solutions. For this, we introduce some convenient auxiliary functions.

Indeed, set $A:\mathbb{R}^N\to \mathbb{R}^N$, $B:\Omega \to \mathbb{R}$ and $D:\mathbb{R}^N\to \mathbb{R}$ given by
\begin{align}
\nonumber
A(v)&=D(v) v
\\
\nonumber
B(x)&=-f_\epsilon(x)
\\
\label{eq1lemap1}
D(v)&= \alpha(|\nabla v|^2+\epsilon )^{(p-2)/2}+\beta(|\nabla v|^2+\epsilon )^{(q-2)/2}.
\end{align}

Then, \eqref{eq1ap} can be rewritten as
\begin{equation*}
  \begin{cases}
   -\mbox{div} \bigg(A\big(\nabla u\big)\bigg)+B(x)  & = 0  \text{ in } \Omega\\
   \nonumber
   \quad \quad \quad \quad \quad \quad \quad \quad \ \quad u&=0    \text{ on } \partial \Omega.
   \end{cases}
   \end{equation*}
where $A \in C^\infty(\mathbb{R}^N,\mathbb{R}^N)$, $B \in C^2(\overline{\Omega})$ and $D \in C^\infty(\mathbb{R}^N)$.

Suppose that $\beta>0$.

Now, it is clear that, cf. 10.2 and 10.5, p. 259-60, 10.23, p. 271, and 15.82, p. 381,  in \cite{gtr},  $A$ and $B$ satisfy the following structure conditions,
\begin{align*}
\sum_{i,j=1}^N \dfrac{1}{2}\bigg(\dfrac{\partial A_j}{\partial v_i}+\dfrac{\partial A_i}{\partial v_j}\bigg)\xi_i \xi_j &\geq D(v)|\xi|^2 \ \forall \xi \in \mathbb{R}^N,
\\
\nonumber
v\cdot A(v)&\geq \beta |v|^q,
\\
\nonumber
|v|^{q-2}&\leq \mbox{O}\big(D(v)\big), \mbox{ since } \beta >0,
\\
\nonumber
|D_v A|&\leq \mbox{O}\big(|v|^{q-2}\big),
\\
\nonumber
B&= \mbox{O}\big(|v|^{q}\big).
\end{align*}
By the above inequalities, and by Thm 15.11, p. 381, in \cite{gtr}, given $\gamma \in (0,1)$, there exists $u\in C^{2,\gamma}(\overline{\Omega})$ satisfying \eqref{eq1ap}.

In the sequel, let us consider
\[ Lv= \mbox{ div}\bigg( F(\nabla u)\bigg) \nabla v)+f.\]
It is clear that this (linear) operator satisfies the hypotheses  of Thm. 9.19 p. 243 in \cite{gtr} for $k=1$.
Therefore, $u\in W^{3,\tau}$, for $\tau>N$.
For the case $\beta=0$ it is enough to consider $(p,\alpha)$ instead of $(q,\beta)$ in the latter arguments.

\end{proof}
\section{Boundary estimates}
\label{boundaryestimates}
In this section, we deal with certain nonstandard nonlinear boundary terms, which in the present case, arouse as a consequence of the Dirichlet boundary conditions coupled to the $(p,q)$-Laplacian.
The  case of Neumann boundary conditions, which is more simple, was treated in \cite{dmm1} and \cite{dmm2}, for the $p$-Laplacian, for $f\in L^2(\Omega)$. 
 Essentially, when the normal derivatives of the solutions are null on the boundary, the terms depending on second order derivatives cancel out so that it is possible the apply trace theorems in order to control the nonlinear gradient terms on the boundary.
  However, in the Dirichlet case there is no cancelation, what makes the estimates more delicate. As it turned out, for Dirichlet boundary conditions,  it is not possible to reduce the second order derivatives on the boundary, so that  improvements on the technique and further assumptions on the data had to be employed.
Basically, we have observed that in the case of Dirichlet boundary conditions, the nonlinear terms could be rewritten in terms of the $(p,q)$-Laplacian. This brings the possibility to use deeper information of the data on the boundary, in order to compensate the lack of information of the derivatives of solutions on $\partial \Omega$, when contrasting to the Neumann case.

For this purpose, we are going to make use of a family of the so-called Moving Frames, considered at every point $x\in \partial \Omega$ and given by tangent and normal vector of $\partial \Omega$.
This means that given any $x\in \partial \Omega$ there exists \[\mathfrak{B}_x=\{\tau_1(x),\cdots, \tau_{N-1}(x), \eta(x)\}\] an orthonormal basis for $\mathbb{R}^N$, for which $T_x \partial \Omega = Span \{ \tau_1(x),\cdots, \tau_{N-1}(x), \nu(x)\}$,  the tangent plane of $\partial \Omega$ at $x$, and $\nu(x)$ denotes its exterior normal unit vector.
In order to visually simplify the notations which we employ, we are going to omit the dependence of the Moving Frame on $x\in \partial \Omega$

For the reader's convenience and the sake of clarity, we provide the details below.
Indeed, given $x\in \partial \Omega$, consider $W\subset \mathbb{R}^N$, an open neighborhood of $x$ and set $V=W\cap \partial \Omega$, chosen sufficiently small so that there exists $\{ \tau_1,\cdots, \tau_{N-1}\}$, a local geodesic frame of $\partial \Omega \cap V$. In particular, for $\nu$ the exterior normal unit vector of $\partial \Omega$, we have that
\[ \{\tau_1,\cdots, \tau_N, \nu\}\]
is a local orthonormal frame in $W$. This in particular implies that at every $y\in W$, $\mathfrak{B}_y=\{\tau_1(y),\cdots, \tau_{N-1}(y),\nu(y)\}$ is an orthonormal basis of $\mathbb{R}^N$.

One of the main advantages of the above family of orthonormal vectors of is that at the points  $y \in \partial \Omega \cap V$, we can choose an orthonormal basis $\mathbb{R}^N$, $\mathfrak{B}_y$ such that the differentiation with respect to these directions fits more properly to the standard boundary conditions
\[ u=0 \mbox{ or } \dfrac{\partial u}{\partial \nu}=0 \mbox{ on } \partial \Omega\]
than other choices for bases, such as the canonical basis $\{e_1,\cdots, e_N\}.$

For simplicity, we drop the dependence of $\mathfrak{B}_y$ on the point $y$.


In this fashion, given any vector field defined in $W$, $v:W\rightarrow \mathbb{R}^N$, there holds
\[ v(y)= \sum_{i=1}^{N-1} v_i(y) \tau_i + v_\nu(y) \nu, y \in  W\]
where $\tau_i=\tau_i(y)$ and $\nu_i=\nu_i(y)$.

Moreover, given $\phi \in C^1(\overline{\Omega})$, remark that $\nabla \phi$ could be written with respect to $B$, so that
\[ \nabla \phi = \sum_{i=1}^{N-1} \dfrac{\partial \phi}{\partial \tau_i} \tau_i + \dfrac{\partial \phi}{\partial \nu}\nu.\]

As a matter of fact, by considering $\eta \in \mathbb{R}^N$, and , we arrive at
\begin{align*}
 \nabla \phi(y)\cdot \eta &= d_y\phi(\eta)=\sum_{i=1}^{N-1} \eta_i d_y\phi(\tau_i)+\eta_\nu d_y\phi(\nu)\\
 &=\bigg(\sum_{i=1}^{N-1} \dfrac{\partial \phi}{\partial \tau_i}(y)\tau_i+\dfrac{\partial \phi}{\partial \nu}(y)\nu\bigg)\cdot \eta,
\end{align*}
since $\displaystyle \eta= \sum_{i=1}^{N-1} \eta_i \tau_i + \eta_\nu \nu$.

Finally, we stress that the latter construction of the Moving Frame $\mathfrak{B}$ could be done, locally, for every $x\in \partial \Omega$, so that indeed depend on the neighborhood $W$. However, since we are going to rewrite our equations in a way that they are independent of $W$, we may disregard the influence of $W$ in our boundary estimates. Thus, by using the above representation, from now on, we denote
\[ \nabla_T\phi = \sum_{i=1}^{N-1} \dfrac{\partial \phi}{\partial \tau_i}\tau_i \mbox{ on } \partial \Omega,\]
and so on.

Now, we are going to present some brief lemmata which we employ in order to control the highly nonlinear terms appearing on the boundary.

The following result is essential for our purposes, despite its simple proof, which for the  convenience of the reader, we provide the details.

\begin{lemma}
\label{lemb1}
Given $u \in C^1(\overline{\Omega})$ such that $u=0$ on $\partial \Omega$. Then $\nabla_T u =0$ on $\partial \Omega$
\end{lemma}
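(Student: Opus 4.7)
The plan is to show that the directional derivative of $u$ along each tangent vector $\tau_i(x)$ vanishes at every point $x\in\partial\Omega$, from which the conclusion $\nabla_T u = \sum_{i=1}^{N-1} \tfrac{\partial u}{\partial \tau_i}\tau_i = 0$ is immediate.

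First, I would fix $x\in\partial\Omega$ and an index $i\in\{1,\dots,N-1\}$. Since $\tau_i(x)\in T_x\partial\Omega$, the tangent plane to the $C^{2,1}$ boundary at $x$, I would invoke a standard local parametrization of $\partial\Omega$ near $x$ (available since $\partial\Omega\in C^{2,1}$ by \eqref{hip1}) to produce a $C^1$ curve $\gamma:(-\varepsilon,\varepsilon)\to\partial\Omega$ with $\gamma(0)=x$ and $\gamma'(0)=\tau_i(x)$. Because $u\equiv 0$ on $\partial\Omega$, the composition $u\circ\gamma$ vanishes identically on $(-\varepsilon,\varepsilon)$.

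Next, using $u\in C^1(\overline{\Omega})$, the chain rule gives
\[
0 = \frac{d}{dt}\bigg|_{t=0}(u\circ\gamma)(t) = \nabla u(x)\cdot\gamma'(0) = \nabla u(x)\cdot\tau_i(x) = \frac{\partial u}{\partial\tau_i}(x).
\]
Since $i$ and $x\in\partial\Omega$ were arbitrary, every tangential coefficient in the Moving Frame decomposition of $\nabla u$ vanishes on $\partial\Omega$, which by the definition of $\nabla_T u$ given just before the statement yields $\nabla_T u = 0$ on $\partial\Omega$.

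There is no real obstacle here; the only subtlety is the need to know that each $\tau_i(x)$ is actually realized as the velocity of some curve remaining in $\partial\Omega$, which is guaranteed by the regularity $\partial\Omega\in C^{2,1}$ together with the construction of the local geodesic frame $\{\tau_1,\dots,\tau_{N-1}\}$ already described in the preceding paragraph. Everything else is a direct application of the chain rule and the definition of $\nabla_T$.
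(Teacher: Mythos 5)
Your proof is correct and follows essentially the same route as the paper: both arguments realize each tangent vector $\tau_i(x)$ as the velocity of a curve lying in $\partial\Omega$, use that $u$ vanishes along that curve, and differentiate via the chain rule to conclude $\partial u/\partial\tau_i=0$. No issues.
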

\begin{proof}
Indeed, given $x\in \partial \Omega$, set
\[
\sigma: [0,T] \rightarrow \partial \Omega \]
such that
\begin{align*}
&\sigma(t_0)=x \\
&\sigma^\prime(t_0)=\tau_i(x), \mbox{ where } t_0 \in (0,T).
\end{align*}

Since, $u\big|_{\partial \Omega}=0$, then
\[ d_{\sigma(t)} u (\sigma ^\prime (t))= \dfrac{d}{dt}u(\sigma(t))=0.\]
Thus, by fixing $t=t_0$ we obtain
\[\dfrac{\partial u }{\partial \tau_i}= d_x u (\tau_i)=0\]
and the result follows.
\end{proof}

At this point, before proceeding to the estimation of the boundary terms, we must introduce some notation employed on the rest of this subsection.
For $u\in C^2(\overline{\Omega})$, we set
\begin{align}
\label{eq2boundary}
G_u(x)&= (|\nabla u(x)|^2+\epsilon )^{t/2}\dfrac{\partial u}{\partial \nu}(x)\Delta u(x)
\\
\nonumber&+ t(|\nabla u(x)|^2+\epsilon)^{(t-2)/2}\Delta u(x) \dfrac{\partial u}{\partial \nu}(x)|\nabla u(x)|^2,
\end{align}
where $x\in \overline{\Omega}$ and $t\in \mathbb{R}$.
Moreover, by recalling the definition of $D$ in Lemma \ref{approximatesolutions}, see \eqref{eq1lemap1}, we consider
\begin{align}
\label{eq1boundary}
D_u(x)&= \big(\alpha(|\nabla u(x)|^2+\epsilon )^{(p-2)/2}+\beta(|\nabla u(x)|^2+\epsilon )^{(q-2)/2}\big).
\end{align}

For simplicity of exposition, whenever it is clear from the context, in what follows we  drop the dependence of these functions on $x$.

Below, we will provide some lemmata which will be used to control boundary terms for the Dirichlet case. We start by the following technical lemma.

\begin{lemma}
\label{lemb2}
Given $u\in C^2(\overline{\Omega})$, and $t\in \mathbb{R}$, suppose that $u=0$ on $\partial \Omega$. Then, there exists $C=C(p,t)>0$, such that
\[ |D_u G_u|\leq C | \alpha \Delta_p^\epsilon u+\beta \Delta^\epsilon_q u| (|\nabla u|^2+\epsilon)^{(t+1)/2} \mbox { on } \partial \Omega.\]
\end{lemma}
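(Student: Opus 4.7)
The strategy rests on two observations: the algebraic form of $G_u$ factors so that $\Delta u$ appears as a single factor, and the Dirichlet condition together with Lemma~\ref{lemb1} forces $\nabla u = (\partial_\nu u)\,\nu$ on $\partial\Omega$, which collapses the contraction $\nabla u^{T}\nabla^{2}u\,\nabla u$ to $|\nabla u|^{2}\partial_{\nu\nu}u$. Throughout, write $\delta := |\nabla u|^{2} + \epsilon$ and $\Lambda := \alpha\Delta_p^{\epsilon}u + \beta\Delta_q^{\epsilon}u$.

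First I would combine the two summands in $G_u$ to obtain
\[
G_u \;=\; \partial_\nu u\,\Delta u\,\delta^{(t-2)/2}\bigl[(t+1)|\nabla u|^{2}+\epsilon\bigr],
\]
and use $\bigl|(t+1)|\nabla u|^{2}+\epsilon\bigr|\leq(|t|+2)\delta$ and $|\partial_\nu u|\leq\delta^{1/2}$ to reduce the lemma to the pointwise bound $D_u|\Delta u|\leq C\,|\Lambda|$ on $\partial\Omega$, at the cost of a factor $C(t)\,\delta^{(t+1)/2}$.

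Next, by Lemma~\ref{lemb1}, $\nabla u=(\partial_\nu u)\,\nu$ on $\partial\Omega$, so $\nabla u^{T}\nabla^{2}u\,\nabla u=|\nabla u|^{2}\partial_{\nu\nu}u$. Expanding $\Delta_p^{\epsilon}u$ and $\Delta_q^{\epsilon}u$ via the chain rule yields the boundary identity
\[
\Lambda \;=\; D_u\,\Delta u + E_u\,|\nabla u|^{2}\,\partial_{\nu\nu}u, \qquad E_u := (p-2)\alpha\,\delta^{(p-4)/2}+(q-2)\beta\,\delta^{(q-4)/2}.
\]
Decomposing $\partial_{\nu\nu}u=\Delta u-\Delta_T u$, where $\Delta_T u := \sum_{i=1}^{N-1}\partial_{\tau_i\tau_i}u$, and setting $\tilde D_u := D_u+E_u|\nabla u|^{2}$, the identity becomes $\Lambda=\tilde D_u\,\Delta u-E_u|\nabla u|^{2}\Delta_T u$. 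Solving for $\Delta u$, multiplying through by $D_u$, and exploiting the elementary bounds $D_u/\tilde D_u\leq 1$ and $E_u|\nabla u|^{2}/\tilde D_u\leq 1$ (the latter because $\tilde D_u\geq E_u|\nabla u|^{2}$) yields the key inequality
\[
|D_u\,\Delta u| \;\leq\; |\Lambda| + D_u\,|\Delta_T u| \quad\text{on }\partial\Omega.
\]

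Finally, to dispose of the residual $D_u|\Delta_T u|$, I would differentiate $u\circ\sigma\equiv 0$ twice along any curve $\sigma\subset\partial\Omega$ with $\sigma'(0)=\tau_i$ and use $\nabla u=(\partial_\nu u)\,\nu$ to read off $\partial_{\tau_i\tau_i}u=-(\partial_\nu u)\,(\nu\cdot\sigma''(0))$. The $C^{2,1}$-regularity of $\partial\Omega$ from \eqref{hip1} bounds the principal curvatures uniformly, giving $|\Delta_T u|\leq c(\Omega)\,|\nabla u|\leq c(\Omega)\,\delta^{1/2}$. Inserting this back through the first reduction closes the estimate, with $c(\Omega)$ absorbed into $C$ since $\Omega$ is fixed throughout the paper.

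\textbf{Main obstacle.} The delicate point is controlling $\Delta_T u$: although $\nabla_T u$ vanishes on $\partial\Omega$, its \emph{second} tangential derivatives do not, being driven by the curvature of $\partial\Omega$ through the second fundamental form. Turning the algebraic identity for $\Delta u$ into a pointwise bound by $|\Lambda|$ alone therefore hinges on the $C^{2,1}$ hypothesis on $\partial\Omega$; this is precisely why the Dirichlet case is considerably more delicate than the Neumann setting of \cite{dmm1,dmm2}, where $\partial_\nu u=0$ annihilates these second-derivative boundary contributions outright, and why the PDE itself (rather than just boundary differentiation) must enter the argument.
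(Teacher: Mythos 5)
Your factorization of $G_u$, the reduction to the pointwise bound $D_u|\Delta u|\leq C|\Lambda|$, and the chain-rule identity $\Lambda = D_u\,\Delta u + E_u|\nabla u|^2\,\partial_{\nu\nu}u$ on $\partial\Omega$ all run parallel to the paper. The gap is in the final step. From $|D_u\Delta u|\leq |\Lambda|+D_u|\Delta_T u|$ and $|\Delta_T u|\leq c(\Omega)\,\delta^{1/2}$ you do not recover the lemma: feeding this back through your first reduction gives
\[
|D_uG_u|\;\leq\;C\,|\Lambda|\,\delta^{\frac{t+1}{2}}\;+\;C\,c(\Omega)\,D_u\,\delta^{\frac{t+2}{2}},
\]
and the second term is \emph{not} dominated by $|\Lambda|\,\delta^{(t+1)/2}$. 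At a boundary point where $f_\epsilon$ (hence $\Lambda=\alpha\Delta_p^\epsilon u+\beta\Delta_q^\epsilon u$) vanishes but $\partial_\nu u\neq 0$, the right-hand side of the lemma is zero while your residual is strictly positive. So the curvature term is not ``absorbed into $C$''; it survives as an additive error and the estimate does not close (it would also make $C$ depend on $\Omega$, contrary to $C=C(p,t)$ in the statement).

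The paper's proof never produces such a residual. Computing $\Delta_p^\epsilon u$ in the moving frame and using Lemma \ref{lemb1} to annihilate every term carrying a factor $\partial u/\partial\tau_i$ or an iterated tangential derivative $\partial_{\tau_i}(\partial_{\tau_j}u)$ (both of which vanish on $\partial\Omega$ because $u|_{\partial\Omega}\equiv 0$), it arrives at the \emph{exact} boundary identity
\[
\alpha\Delta_p^\epsilon u+\beta\Delta_q^\epsilon u
=\Big[\alpha\,\delta^{\frac{p-4}{2}}\big(\delta+(p-2)|\nabla u|^2\big)+\beta\,\delta^{\frac{q-4}{2}}\big(\delta+(q-2)|\nabla u|^2\big)\Big]\,\Delta u=:M\,\Delta u ,
\]
with $M\geq D_u$ since $p,q>2$, whence $D_u|\Delta u|\leq |\Lambda|$ holds with no remainder and the factorization of $G_u$ finishes the proof. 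The decisive point is thus that $\Lambda$ and $\Delta u$ are shown to be exactly proportional on $\partial\Omega$ with a factor dominating $D_u$ — the tangential contribution vanishes identically in the frame computation rather than being merely bounded by curvature. Your Hessian-based decomposition $\partial_{\nu\nu}u=\Delta u-\Delta_T u$ reintroduces the second fundamental form as an irreducible error, and no pointwise bound on $\Delta_T u$ in terms of $|\nabla u|$ alone can recover the statement as written; to proceed along the paper's lines you must instead show that the tangential pieces drop out of the boundary representation of $\Lambda$ itself.
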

\begin{proof}
First, observe that for $x\in \partial \Omega$
\begin{align}
\label{eq1lemb2}
\Delta_p^\epsilon u &= \mbox{div}\big((|\nabla u|^2+\epsilon)^{(p-2)/2}\nabla u \big)
\\
\nonumber
&= \sum_{i=1}^{N-1} \dfrac{\partial}{\partial \tau_i} \bigg((|\nabla u|^2+\epsilon)^{(p-2)/2} \dfrac{\partial u}{\partial \tau_i}\bigg)+\dfrac{\partial}{\partial \nu} \bigg((|\nabla u|^2+\epsilon)^{(p-2)/2} \dfrac{\partial u}{\partial \nu}\bigg)
\end{align}

However, by applying Lemma \ref{lemb1} we have that
\[\dfrac{\partial u}{\partial \tau_i}=0 \mbox{ on }\partial \Omega, \mbox{ for } i=1,\cdots, N-1.\]
In particular, for the same reason,
\[\dfrac{\partial^2 u}{\partial \tau_i \partial \tau_j}=0 \mbox{ on } \partial \Omega, \mbox{ for } i,j=1,\cdots, N-1.\]

Thus, by direct calculations, we are led to
\begin{align}
\label{eq2lemb2}
&\sum_{i=1}^{N-1} \dfrac{\partial }{\partial \tau_i}\bigg( (|\nabla u|^2+\epsilon)^{(p-2)/2}\dfrac{\partial u}{\partial \tau_i}\bigg)=  \sum_{i=1}^{N-1}  (|\nabla u|^2+\epsilon)^{(p-2)/2}\dfrac{\partial^2 u}{\partial \tau_i^2}
\\
\nonumber
&+(p-2)\sum_{i=1}^{N-1}(|\nabla u |^2+\epsilon )^{(p-4)/2}\bigg\{\sum_{j=1}^{N-1}\dfrac{\partial u}{\partial \tau_j}\dfrac{\partial^2 u}{\partial\tau_i\partial\tau_j}\dfrac{\partial u}{\partial \tau_i}+\dfrac{\partial u}{\partial \nu}\dfrac{\partial^2 u}{\partial\tau_i\partial\nu}\dfrac{\partial u}{\partial \tau_i}\bigg\}
\\
\nonumber
&=0 \mbox{ on } \partial \Omega.
\end{align}

Moreover, in an analogous manner

\begin{align}
\label{eq3lemb2}
&\dfrac{\partial}{\partial \nu} \bigg( (|\nabla u|^2+\epsilon)^{(p-2)/2}\dfrac{\partial u}{\partial \nu}\bigg)=(|\nabla u|^2+\epsilon)^{(p-2)/2}\dfrac{\partial^2 u}{\partial \nu^2}
\\
\nonumber
&+(p-2)(|\nabla u|^2+\epsilon)^{(p-4)/2}\bigg\{\sum_{i=1}^{N-1}\dfrac{\partial u}{\partial \tau_i}\dfrac{\partial^2u}{\partial\nu\partial \tau_i}\dfrac{\partial u}{\partial\nu}+\bigg(\dfrac{\partial u}{\partial \tau_i}\bigg)^2\dfrac{\partial^2u}{\partial\nu^2}\bigg\}
\\
\nonumber
&=(|\nabla u|^2+\epsilon)^{(p-2)/2}\Delta u +(p-2)(|\nabla u|^2+\epsilon)^{(p-4)/2}|\nabla u|^2\Delta u.
\end{align}
Hence, from \eqref{eq1lemb2}-\eqref{eq3lemb2}, one gets that
\[ \Delta_p^\epsilon u = (|\nabla u|^2+\epsilon)^{(p-2)/2}\Delta u +(p-2)(|\nabla u|^2+\epsilon)^{(p-4)/2}|\nabla u|^2\Delta u.
\]
Then, by using the above representation for $p$ and $q$, there follows that
\begin{align}
\nonumber
\alpha \Delta_p^\epsilon u +\beta \Delta_q^\epsilon u  = & \  \alpha(|\nabla u|^2+\epsilon)^{(p-2)/2}\Delta u
\\\nonumber
& +\alpha(p-2)(|\nabla u|^2+\epsilon)^{(p-4)/2}|\nabla u|^2\Delta u
\\\nonumber
&+ \beta(|\nabla u|^2+\epsilon)^{(q-2)/2}\Delta u
\\
&+\beta(q-2)(|\nabla u|^2+\epsilon)^{(q-4)/2}|\nabla u|^2\Delta u.
\nonumber
\end{align}

In addition, by considering
\[ C=\max \bigg\{ 1, \dfrac{|t|}{p-2}\bigg\},\]

we arrive at
\begin{align}
\nonumber
&\bigg (\alpha(|\nabla u|^2+\epsilon)^{(p-4)/2}+\beta (|\nabla u|^2+\epsilon)^{(q-4)/2}\bigg)\bigg|(|\nabla u|^2+\epsilon)^2+\epsilon|\nabla u|^2\bigg|
\\\nonumber
&\leq C\alpha(|\nabla u|^2+\epsilon)^{(p-4)/2}\bigg((|\nabla u|^2+\epsilon)^2+(p-2)|\nabla u|^2\bigg)
\\\label{eq5lemb2}
 &+ C\beta(|\nabla u|^2+\epsilon)^{(q-4)/2}\bigg((|\nabla u|^2+\epsilon)^2+(q-2)|\nabla u|^2\bigg)
\\
\nonumber
&=C\bigg|\alpha(|\nabla u|^2+\epsilon)^{(p-4)/2}\bigg((|\nabla u|^2+\epsilon)^2+(p-2)|\nabla u|^2\bigg)
\\\nonumber
 &+\beta(|\nabla u|^2+\epsilon)^{(q-4)/2}\bigg((|\nabla u|^2+\epsilon)^2+(q-2)|\nabla u|^2\bigg)\bigg|.
\end{align}
In this way, by the above inequality and the definition of $F$ and $G$, given $x\in \partial \Omega$

\begin{align}
\nonumber
\bigg | D_u(x)G_u(x)\bigg| &\leq  \bigg| \alpha (|\nabla u|^2+\epsilon)^{(p-2)/2}+\alpha t (|\nabla u|^2+\epsilon )^{(p-4)/4}
\\
\label{eq6lemb2}
&+\beta (|\nabla u|^2+\epsilon)^{(q-2)/2}+\beta t (|\nabla u|^2+\epsilon )^{(q-4)/4}\bigg||\Delta u|
(|\nabla u|^2+\epsilon)^{t/2}\bigg|\dfrac{\partial u}{\partial \nu }\bigg| .
\end{align}

Therefore, by combining \eqref{eq5lemb2} and \eqref{eq6lemb2}, we arrive at
\begin{align*}
\bigg | D_u(x)G_u(x)\bigg| &\leq C\bigg|\alpha \Delta_p^\epsilon u +\beta \Delta_q^\epsilon u \bigg|\bigg|\dfrac{\partial u}{\partial \nu }\bigg| (|\nabla u|^2+\epsilon)^{t/2},
\end{align*}
on $\partial \Omega$.
\end{proof}

With the above results, we are able to prove a very important estimate which allows us to handle the nonlinear boundary terms in the Dirichlet case.
In contrast to \cite{dmm1} and \cite{dmm2}, now, in order to guarantee such control we had to ask for more information on $f$ on $\partial \Omega$.
In some sense, we compensate the lack of regularity of $u$ on $ \partial \Omega$ in comparison to the Neumann case with additional information on the trace of $f$.
\begin{lemma}
\label{lemb3}
Under our basic assumptions, consider that $t\in (-1,+\infty)$. Given $u \in C^2(\overline{\Omega})$ a classic solution of \eqref{eq1ap} there holds that
\begin{align}
\nonumber
&\bigg | \sum_{i,j=1}^N \int_{\partial \Omega }  \bigg(\alpha (|\nabla u |^2+\epsilon)^{(p-2)/2}u_{x_i}
\\
\nonumber
&+\beta (|\nabla u |^2+\epsilon)^{(q-2)/2}u_{x_i}\bigg)\big (|\nabla u |^2+\epsilon)^{t/2}u_{x_j}\big)_{x_i}\nu_j dS\bigg|
\\
\label{eq11lemb3}
&\leq C \|f_\epsilon\|_{L^s(\partial\Omega)}\bigg( \|\nabla u\|^{t+1}_{L^{s^\prime(t+1)}(\partial \Omega)}+\epsilon^{(t+1)/2}\bigg),
\end{align}
where $C=C(p,s,t,\partial \Omega)>0$.
\end{lemma}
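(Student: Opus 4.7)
The plan is to recognize that the boundary integrand in \eqref{eq11lemb3}, restricted to $\partial\Omega$, reduces to precisely $D_u G_u$ with $D_u$ and $G_u$ as in \eqref{eq1boundary}--\eqref{eq2boundary}. From there one invokes Lemma \ref{lemb2}, exploits the equation \eqref{eq1ap} on $\partial\Omega$, and closes with H\"older's inequality.

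For the algebraic reduction, set $W=|\nabla u|^2+\epsilon$ and expand
\[
(W^{t/2} u_{x_j})_{x_i} = t\, W^{(t-2)/2}\, u_{x_j}\sum_k u_{x_k} u_{x_k x_i} + W^{t/2}\, u_{x_i x_j}.
\]
By Lemma \ref{lemb1}, the tangential gradient vanishes on $\partial\Omega$, so $\nabla u=(\partial u/\partial\nu)\,\nu$ there, and hence $u_{x_i}=(\partial u/\partial\nu)\,\nu_i$. Plugging this in, using $|\nu|=1$ to collapse repeated factors of the form $\sum_j \nu_j^2$, and proceeding exactly as in the proof of Lemma \ref{lemb2} to identify $\sum_{i,k}\nu_i\nu_k u_{x_i x_k}$ with $\Delta u$ via the vanishing of tangential derivatives of $u$, the integrand rewrites on $\partial\Omega$ as
\[
D_u\,\frac{\partial u}{\partial\nu}\,\Delta u\,\Big[W^{t/2}+t\,W^{(t-2)/2}|\nabla u|^2\Big]=D_u\,G_u.
\]

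Once the integrand has been identified with $D_u G_u$, Lemma \ref{lemb2} gives the pointwise estimate
\[
|D_u G_u|\leq C\,|\alpha\Delta_p^\epsilon u+\beta\Delta_q^\epsilon u|\,(|\nabla u|^2+\epsilon)^{(t+1)/2}\qquad\text{on }\partial\Omega,
\]
and the equation \eqref{eq1ap}, which holds classically up to the boundary by Lemma \ref{approximatesolutions}, allows us to replace $\alpha\Delta_p^\epsilon u+\beta\Delta_q^\epsilon u$ by $-f_\epsilon$ on $\partial\Omega$. Since $t+1>0$, the elementary inequality $(a+b)^{(t+1)/2}\leq C(a^{(t+1)/2}+b^{(t+1)/2})$ for $a,b\geq 0$ yields
\[
(|\nabla u|^2+\epsilon)^{(t+1)/2}\leq C\bigl(|\nabla u|^{t+1}+\epsilon^{(t+1)/2}\bigr).
\]
H\"older's inequality on $\partial\Omega$ with the conjugate pair $(s,s')$, together with the finiteness of $|\partial\Omega|$ to absorb the constant term arising from $\epsilon^{(t+1)/2}$, produces exactly \eqref{eq11lemb3}.

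The delicate step is the first one: matching the raw integrand against the model quantity $D_u G_u$ on $\partial\Omega$. All the cancellations are driven by the vanishing of tangential first derivatives of $u$ from Lemma \ref{lemb1} and by the normalization $|\nu|=1$, and they mirror the Moving Frame computations already carried out in Lemma \ref{lemb2}. Once this identification is secured, the remainder of the argument is a direct application of Lemma \ref{lemb2}, the equation, and H\"older's inequality, producing the dependence on $\|f_\epsilon\|_{L^s(\partial\Omega)}$, $\|\nabla u\|_{L^{s'(t+1)}(\partial\Omega)}$ and $\epsilon$ stated in the lemma.
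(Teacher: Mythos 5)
Your argument is correct and follows essentially the same route as the paper's proof: reduce the boundary integrand to $D_uG_u$ using the vanishing of tangential derivatives from Lemma \ref{lemb1}, invoke Lemma \ref{lemb2}, substitute $-f_\epsilon$ for $\alpha\Delta_p^\epsilon u+\beta\Delta_q^\epsilon u$ on $\partial\Omega$ via the equation, and finish with H\"older's inequality. The only cosmetic difference is that you carry out the reduction in Cartesian coordinates by substituting $u_{x_i}=(\partial u/\partial\nu)\nu_i$, whereas the paper decomposes the whole expression in the moving frame; both hinge on the same identification of $\Delta u$ with $\partial^2u/\partial\nu^2$ on the boundary.
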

\begin{proof}
First of all, we stress that given $\omega \in \mathbb{R}^N$, by considering an extension of the normal unit outer vector to a small neighborhood of $\partial \Omega$, there follows that
\begin{equation}
\label{eq2lemb3}
\dfrac{\partial \nu(x)}{\partial \omega } \cdot \nu(x)=0 \quad \forall x \in \partial \Omega,
\end{equation}
since $\nu (x) \cdot \nu (x) =1$.

Moreover, in other to visually simplify our calculations, we consider
\begin{align}
\nonumber
H_u &= \sum_{i,j=1}^N  \bigg(\alpha (|\nabla u |^2+\epsilon)^{(p-2)/2}u_{x_i}+\beta (|\nabla u |^2+\epsilon)^{(q-2)/2}u_{x_i}\bigg)\big (|\nabla u |^2+\epsilon)^{t/2}u_{x_j}\big)_{x_i}\nu_j
\\
\nonumber
&= \bigg(D_u(x)\nabla u \cdot \nabla \bigg)
\\
\nonumber
&\circ \bigg( (|\nabla u|^2+\epsilon)^{t/2}\nabla u\bigg)\cdot \nu,
\\
&\label{eq0lemb3}
\end{align}
where $D_u$ was defined in \eqref{eq1boundary}.

Since we are interested in $H_u$'s behavior on $\partial \Omega$, it is more convenient to make use of the moving frame described above, $\mathfrak{B}_x$ for every $x\in \partial \Omega$. Thus, by switching the  frame we arrive at
\begin{align*}
H_u& = D_u \times \bigg\{ \sum_{i,j=1}^{N-1} \dfrac{\partial u }{\partial \tau_i} \dfrac{\partial}{\partial \tau_i} \bigg( (|\nabla u|^2+\epsilon)^{t/2}\dfrac{\partial u}{\partial \tau_j} \tau_j\bigg)\cdot \nu
\\
&+\sum_{i=1}^{N-1} \dfrac{\partial u }{\partial \tau_i} \dfrac{\partial}{\partial \tau_i} \bigg( (|\nabla u|^2+\epsilon)^{t/2}\dfrac{\partial u}{\partial \nu} \nu\bigg)\cdot \nu+\sum_{j=1}^{N-1} \dfrac{\partial u }{\partial \nu} \dfrac{\partial}{\partial \nu} \bigg( (|\nabla u|^2+\epsilon)^{t/2}\dfrac{\partial u}{\partial \tau_j} \tau_j\bigg)\cdot \nu
\\
&+ \dfrac{\partial u }{\partial \nu} \dfrac{\partial}{\partial \nu} \bigg( (|\nabla u|^2+\epsilon)^{t/2}\dfrac{\partial u}{\partial \nu} \nu\bigg)\cdot \nu\bigg\}, \mbox{ on } \partial \Omega.
\end{align*}

Thus, by recalling that $\tau_i\cdot \nu =0$, for $i=1,\cdots,N-1$, and also, by combining the latter identity with \eqref{eq2lemb3}, one gets
\begin{align}
\nonumber
H_u& = D_u \times \bigg\{ \sum_{i,j=1}^{N-1} \dfrac{\partial u }{\partial \tau_i}  (|\nabla u|^2+\epsilon)^{t/2}\dfrac{\partial u}{\partial \tau_j} \dfrac{\partial \tau_j}{\partial \tau_i}\cdot \nu
\\
\nonumber
&+\epsilon\sum_{i,j=1}^{N-1} \dfrac{\partial u }{\partial \tau_i} (|\nabla u|^2+\epsilon)^{(t-2)/2}\dfrac{\partial^2u}{\partial \tau_i\partial \tau_j}\dfrac{\partial u}{\partial \tau_j}\dfrac{\partial u}{ \partial\nu}
\\
\nonumber
&+\epsilon\sum_{i=1}^{N-1} \dfrac{\partial u }{\partial \tau_i} (|\nabla u|^2+\epsilon)^{(t-2)/2}\dfrac{\partial^2 u}{\partial \tau_i \partial \nu} \bigg(\dfrac{\partial u}{\partial \nu}\bigg)^2+\sum_{i=1}^{N-1} \dfrac{\partial u }{\partial \tau_i}  (|\nabla u|^2+\epsilon)^{t/2}\dfrac{\partial^2 u}{\partial \tau_i\partial \nu}
\\
\nonumber
& +\sum_{j=1}^{N-1} \dfrac{\partial u }{\partial \nu}  (|\nabla u|^2+\epsilon)^{t/2}\dfrac{\partial u}{\partial \tau_j} \dfrac{\partial \tau_j}{\partial \nu}\cdot \nu+\epsilon\sum_{i=1}^{N-1}\bigg(\dfrac{\partial u }{\partial \nu}\bigg)^2 (|\nabla u|^2+\epsilon)^{(t-2)/2}\dfrac{\partial^2 u}{\partial \nu\partial \tau_i}\dfrac{\partial u}{\partial \tau_i}
\\
\label{eq3lemb3}
&+ t \bigg(\dfrac{\partial u }{\partial \nu}\bigg)^3  (|\nabla u|^2
+\epsilon)^{(t-2)/2}\dfrac{\partial^2 u}{\partial \nu^2}+ \dfrac{\partial u }{\partial \nu}  (|\nabla u|^2+\epsilon)^{t/2}\dfrac{\partial^2 u}{\partial \nu^2} \bigg\},
\end{align}
on $\partial \Omega$.

The boundary condition $u = 0$ on $\partial \Omega$, according to Lemma \ref{lemb1}, guarantees that
\[ \dfrac{\partial u}{\partial\tau_i}=\dfrac{\partial u}{\partial \tau_i \partial\tau_j}=0 \quad \forall i,j=1,\cdots,N-1,\]
which in particular implies that $\Delta u = \dfrac{\partial ^2u}{\partial \nu^2}$ on $\partial \Omega$.

In this fashion, by recalling the definition of $G_u$ in \eqref{eq2boundary}, it turns out that \eqref{eq3lemb3} becomes
\begin{align*}
\nonumber
H_u&= D_u  \bigg(t \bigg(\dfrac{\partial u }{\partial \nu}\bigg)^3  (|\nabla u|^2+\epsilon)^{(t-2)/2}\dfrac{\partial^2 u}{\partial \nu^2}+ \dfrac{\partial u }{\partial \nu}  (|\nabla u|^2+\epsilon)^{t/2}\dfrac{\partial^2 u}{\partial \nu^2} \bigg)
\\
&=D_u  \bigg(t \bigg(\dfrac{\partial u }{\partial \nu}\bigg)^3  (|\nabla u|^2+\epsilon)^{(t-2)/2}\Delta u+ \dfrac{\partial u }{\partial \nu}  (|\nabla u|^2+\epsilon)^{t/2}\Delta u \bigg)
\\
&=D_u  G_u, \mbox{ on } \partial \Omega.
\end{align*}

Thus,  by Lemma \ref{lemb2}
\begin{align}
\nonumber
|H_u| &\leq C | \alpha \Delta_p^\epsilon u+\beta \Delta^\epsilon_q u| (|\nabla u|^2+\epsilon)^{(t+1)/2}
\\
\label{eq4lemb3}
&\leq C|f_\epsilon|(|\nabla u|^2+\epsilon)^{(t+1)/2},
\end{align}
on $\partial \Omega$.

Therefore, by combining \eqref{eq0lemb3} and \eqref{eq4lemb3}, after integrating over $\partial \Omega$ and by applying H\"{o}lder's inequality for $s$ and $s^\prime$, we obtain \eqref{eq11lemb3}.
\end{proof}

Finally, we exhibit a set of imbeddings  which will allow us to compensate the highly nonlinear boundary terms appearing on the latter estimates by means of the fractional a priori bounds of the solutions themselves. Despite that the proof is well-know, we provide the details for the sake of completeness.
\begin{lemma}
\label{lemb4}
Under our basic assumptions, consider $r_i$ given by \eqref{exponents} and $t_i$ given by \eqref{auxiliaryexponents} and \eqref{auxiliaryexponents2}. The imbedding
\[ \mathcal{N}^{1+\frac{2}{r_i},r_i}(\Omega) \hookrightarrow\hookrightarrow W^{1,s^\prime(t_i+1)}(\partial \Omega)\]
holds true for $i=1,2$ and $3$.
Moreover, if in addition
\[p<q<p+1 \mbox{ and }s\geq 1 + \dfrac{1}{1+p-q}\]
then the latter imbedding also holds for $i=4$.
\end{lemma}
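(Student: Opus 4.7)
The plan is to reduce the embedding to two standard steps: a trace map from the interior Nikolskii space into a Besov space on the $(N-1)$-dimensional boundary, followed by a Sobolev-type embedding on $\partial\Omega$ yielding $W^{1,s'(t_i+1)}(\partial\Omega)$, with compactness obtained from a strict inequality in the Sobolev exponent.

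First, I would identify $\mathcal{N}^{1+2/r_i,r_i}(\Omega)$ with the Besov class $B^{1+2/r_i}_{r_i,\infty}(\Omega)$, which is the standard characterization via differences of the gradient. Since $\partial\Omega\in C^{2,1}$ by \eqref{hip1}, the classical trace theorem for Besov spaces (see \cite{grisvard,leoni}) produces a continuous operator
\[
\mathrm{Tr}:B^{1+2/r_i}_{r_i,\infty}(\Omega)\to B^{1+1/r_i}_{r_i,\infty}(\partial\Omega),
\]
which is well-defined because $1+2/r_i>1/r_i$. An alternative route, avoiding the explicit Besov identification, is to flatten the boundary locally using the $C^{2,1}$-charts and then apply the difference-quotient characterization of the Nikolskii seminorm directly to $\nabla u$; both paths end at the same target space.

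Next, I would invoke the Sobolev-type embedding on the compact $(N-1)$-dimensional manifold $\partial\Omega$. For $B^{1+1/r_i}_{r_i,\infty}(\partial\Omega)$, the inclusion into $W^{1,\rho}(\partial\Omega)$ holds continuously whenever
\[
\frac{1}{\rho}\geq \frac{1}{r_i}-\frac{1}{(N-1)r_i}, \qquad\text{i.e.,}\qquad \rho\leq \frac{(N-1)\,r_i}{N-2},
\]
and compactly when the inequality is strict, by a Rellich--Kondrachov-type argument for Besov scales. Composing the two embeddings reduces the lemma to verifying the arithmetic condition $s'(t_i+1)<\tfrac{(N-1)r_i}{N-2}$ for each $i$.

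The last step is the case-by-case verification. For $i=1,2,3$, the auxiliary exponents $t_i$ from \eqref{auxiliaryexponents} together with $s\geq 2$ yield $s'(t_i+1)\leq r_i$, so the strict inequality is automatic since $(N-1)/(N-2)>1$; one just plugs in $r_1=s(p-2)+2$, $r_2=s(q-2)+2$ and $r_3=r_1+q-p$ and checks the bound. The delicate instance is $i=4$: here $r_4=s(q-2)+2+p-q$ is the smallest of the four Nikolskii orders, and the target exponent $s'(t_4+1)$ grows faster in $q-p$ than $r_4$ does. This is exactly where the assumptions $p<q<p+1$ and $s\geq 1+\tfrac{1}{1+p-q}$ become sharp: the former keeps the deficit $q-p$ strictly below one, while the latter forces $r_4$ to remain large enough for the Sobolev inequality on $\partial\Omega$ to absorb $s'(t_4+1)$. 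I expect this final arithmetic verification for $i=4$ to be the main obstacle, since the margin closes precisely at the stated threshold and any slack in the bookkeeping would invalidate the embedding.
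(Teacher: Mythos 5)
Your argument is correct and lands on exactly the same arithmetic condition as the paper, namely $\frac{r_i}{s^\prime(t_i+1)}>\frac{N-2}{N-1}$, but it composes the two standard ingredients in the opposite order. The paper works entirely in the interior first: it extracts compactness from $\mathcal{N}^{1+\frac{2}{r_i},r_i}(\Omega)\hookrightarrow\hookrightarrow W^{1+\frac{2}{r_i}-\eta,r_i}(\Omega)$ (Knees, Lemma 2.1), then applies the Sobolev embedding between Sobolev--Slobodeckii spaces on $\Omega$ (Grisvard, Thm.\ 1.4.4.1) to reach $W^{1+\frac{1}{s^\prime(t_i+1)}+\eta,\,s^\prime(t_i+1)}(\Omega)$, and only at the end uses the trace theorem (Grisvard, Thm.\ 1.5.1.2) to land in $W^{1,s^\prime(t_i+1)}(\partial\Omega)$. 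You trace first, into $B^{1+1/r_i}_{r_i,\infty}(\partial\Omega)$, and then apply a Sobolev embedding on the compact $(N-1)$-dimensional manifold $\partial\Omega$, obtaining compactness from the strict inequality there. Both routes are legitimate and track the same ``differential dimension'' bookkeeping; the paper's version has the advantage of quoting only statements for Sobolev--Slobodeckii spaces on domains plus one Nikolskii embedding, while yours needs the trace theorem for Besov spaces with third index $\infty$ and a Rellich--Kondrachov statement on manifolds --- standard, but a slightly heavier framework.

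Two smaller points. First, you leave the $i=4$ arithmetic unverified; it closes easily: $t_4+1=(s-1)(q-2)+1$, so $s^\prime(t_4+1)=s(q-2)+\frac{s}{s-1}$, and $s^\prime(t_4+1)\leq r_4=s(q-2)+2+p-q$ is equivalent to $\frac{1}{s-1}\leq 1+p-q$, i.e.\ $s\geq 1+\frac{1}{1+p-q}$, which is exactly the stated hypothesis (and $q<p+1$ is what keeps $1+p-q>0$). The analogous computation gives $s^\prime(t_i+1)\leq r_i$ for $i=1,2,3$ whenever $s\geq 2$, as you assert. Second, your remark that ``the margin closes precisely at the stated threshold'' overstates matters: the hypothesis yields $s^\prime(t_4+1)\leq r_4$, whereas the embedding only needs $s^\prime(t_4+1)<\frac{N-1}{N-2}\,r_4$, so there is a factor $\frac{N-1}{N-2}$ of slack. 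The threshold in the lemma is where the cruder sufficient condition $s^\prime(t_4+1)\leq r_4$ --- the one both you and the paper actually verify --- becomes available, not where the boundary embedding itself degenerates.
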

\begin{proof}
 First remark that for all $t>0$, $\eta>0$, sufficiently small,
 \[
  W^{1+\frac{1}{s^\prime(t+1)}+\eta,s^\prime(t+1)}(\Omega) \hookrightarrow W^{1,s^\prime(t_i+1)}(\partial \Omega),
 \]
 see for instance \cite{grisvard}, Thm. 1.5.1.2, p. 37.

 Moreover, for a given $r>2$, it is well-know that
  \[
  \mathcal{N}^{1+\frac{2}{r},r}(\Omega) \hookrightarrow\hookrightarrow W^{1+\frac{2}{r}-\eta,r}( \Omega)
 \]
 see \cite{knees}, Lemma 2.1.

 Thus, it is enough to prove that, for a sufficiently small $\eta>0$, there holds
  \[
 W^{1+\frac{2}{r_i}-\eta,r_i}( \Omega) \hookrightarrow W^{1+\frac{1}{s^\prime(t_i+1)}+\eta,s^\prime(t_i+1)}(\Omega)
 ,\]
 where $t_i=r_i-p$, for $i=1,4$, and $t_i=r_i-q$, for $i=2,3$.

 Remark that in order to guarantee the validity of the latter imbedding,  by Thm. 1.4.4.1, \cite{grisvard} p. 17, we only need to verify that
 \[
 \dfrac{N-2}{N-1} < \dfrac{r_i}{s^\prime(t_i+1)}=\dfrac{r_i(s-1)}{s(t_i+1)}.
 \]
 Since
\[
 \dfrac{N-2}{N-1}<1
 \]
 the result follows by the choices of $r_i$, $t_i$  and by isolating $s$ in terms of $r_i$ in each case.
\end{proof}
\section{Energy estimates}
\label{energyestimates}
In this section, we provide the core energy estimates for the regularity of solutions of \eqref{eq1}.
These estimates are the natural generalizations of the ones obtained in \cite{dmm1} and \cite{dmm2}, with two main differences. First and most of all, we now have to handle more delicate boundary data, since here we do not consider Neumann boundary conditions. We stress that these boundary conditions are on the basis of the energy estimates obtained in \cite{dmm1} or \cite{dmm2} since they allow the simplification of the nonstandard boundary terms which appear on the estimates. In the present case, the technique needs to be modified, so that by means of the equation itself we are able to handle the boundary integral terms but now in the case of Dirichlet boundary conditions. Second, we have to deal with a nonlinear source of $(p,q)$-type, which brings an extra unbalance bias to the estimates. However, we actually use this additional unbalance to obtain extra estimates.

For the sake of clarity, we split the proof of the energy estimates along some brief lemmata. We stress that despite most of the  results of the present section hold under weaker assumptions on $u$, the proofs below are stated for \(u\in W^{3,\tau}\), where $\tau>N$. This is done for the sake of clarity, since $W^{3,\tau}$ is the class of solutions which we are able to obtain existence results for the approximate problems in Section \ref{approximations}.

Let us start with a very simple result, which, however, represents the basic idea in order to obtain higher regularity of our solutions.

\begin{lemma}
\label{leme1}
Under our basic assumptions, given \(u\in W^{3,\tau}\), where $\tau>N$, consider
 the following functionals
 \begin{align}
 \nonumber
 & I_{a,\gamma,t}=\sum_{j=1}^N\gamma(a-2+t)\int_{\Omega}\left(|\nabla u|^2+\epsilon\right)^{\frac{a-4+t}{2}}\left(\sum_{i=1}^Nu_{x_ix_j}u_{x_i}\right)^2
  \\
    \nonumber
   &+\gamma(a-2)t\int_{\Omega}\left(|\nabla u|^2+\epsilon\right)^\frac{a-6+t}{2}\left(\sum_{i,j}^Nu_{x_i}u_{x_ix_j}\right)^2
   \\
   \nonumber
    &+\gamma\int_{\Omega}\left(|\nabla u|^2+\epsilon\right)^\frac{a-2+t}{2}|D^2u|^2
    \\
    \label{efunc}
    &\equiv I^1_{a,\gamma,t}+I^2_{a,\gamma,t}+I^3_{a,\gamma,t},
 \end{align}
 and
\begin{align}
\label{befunc}
F_t&=&\sum_{i,j=1}^N\int_{\partial \Omega}\left[\alpha\left(|\nabla u|^2+\epsilon\right)^\frac{p-2}{2}u_{x_i}+\beta\left(|\nabla u|^2
+\epsilon\right)^\frac{q-2}{2}u_{x_i}\right]
\\\nonumber
&&\cdot \left[ \left(|\nabla u|^2+\epsilon\right)^\frac{t}{2}u_{x_j}\right]_{x_i}\nu_j\text{ds},
 \end{align}
where \(t \in \mathbb{R}\).
  Then, there holds that
  \begin{align*}
  & \int_\Omega\left[\alpha\left(|\nabla u|^2+\epsilon\right)^\frac{p-2}{2}\nabla u+\beta\left(|\nabla u|^2+\epsilon\right)^\frac{q-2}{2}\nabla u\right]\cdot \nabla \left(-\Delta^{\epsilon}_{t+2}u\right)
\\
&= I_{p,\alpha,t}+I_{q,\beta,t}-F_t.
\end{align*}
  \end{lemma}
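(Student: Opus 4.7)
The plan is to reduce the identity to a direct integration by parts followed by an explicit expansion of the vector field derivatives via the chain rule. Since $u\in W^{3,\tau}$ with $\tau>N$, Morrey's embedding gives $u\in C^{2,\gamma}(\overline{\Omega})$ for some $\gamma>0$, so mixed partials commute pointwise and the divergence theorem applies classically.

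First, write the integrand in coordinates. Set
$\psi_j=\alpha(|\nabla u|^2+\epsilon)^{(p-2)/2}u_{x_j}+\beta(|\nabla u|^2+\epsilon)^{(q-2)/2}u_{x_j}$ and $\phi_i=(|\nabla u|^2+\epsilon)^{t/2}u_{x_i}$, so that $\Delta^\epsilon_{t+2}u=\partial_i\phi_i$. The left-hand side becomes
\[
\int_\Omega \psi_j\,\partial_j(-\partial_i\phi_i)\,dx=-\int_\Omega \psi_j\,\partial_i(\partial_j\phi_i)\,dx,
\]
after swapping the mixed partials $\partial_j\partial_i=\partial_i\partial_j$. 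Integrating by parts in the variable $x_i$ produces
\[
\int_\Omega(\partial_i\psi_j)(\partial_j\phi_i)\,dx-\int_{\partial\Omega}\psi_j(\partial_j\phi_i)\nu_i\,dS.
\]
The boundary contribution coincides with $F_t$: after renaming the dummy indices $i\leftrightarrow j$ (both run from $1$ to $N$), $\int_{\partial\Omega}\psi_j(\partial_j\phi_i)\nu_i\,dS=\int_{\partial\Omega}\psi_i(\partial_i\phi_j)\nu_j\,dS=F_t$. Thus the boundary piece contributes exactly $-F_t$ to the identity.

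Next, evaluate the interior term $\int_\Omega(\partial_i\psi_j)(\partial_j\phi_i)\,dx$ by the chain rule. For the $\alpha$-piece,
\[
\partial_i\psi_j^{(\alpha)}=\alpha(|\nabla u|^2+\epsilon)^{(p-2)/2}u_{x_ix_j}+\alpha(p-2)(|\nabla u|^2+\epsilon)^{(p-4)/2}\bigl(\textstyle\sum_k u_{x_k}u_{x_kx_i}\bigr)u_{x_j},
\]
and analogously
\[
\partial_j\phi_i=(|\nabla u|^2+\epsilon)^{t/2}u_{x_ix_j}+t(|\nabla u|^2+\epsilon)^{(t-2)/2}\bigl(\textstyle\sum_k u_{x_k}u_{x_kx_j}\bigr)u_{x_i}.
\]
Forming the product yields four terms. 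The pure Hessian-Hessian contraction gives $\alpha(|\nabla u|^2+\epsilon)^{(p-2+t)/2}|D^2u|^2$, recovering $I^3_{p,\alpha,t}$. The two cross terms, after contracting the indices, both produce $(|\nabla u|^2+\epsilon)^{(p-4+t)/2}\sum_j\bigl(\sum_i u_{x_i}u_{x_ix_j}\bigr)^2$, with respective coefficients $\alpha t$ and $\alpha(p-2)$, whose sum is $\alpha(p-2+t)$ and yields $I^1_{p,\alpha,t}$. Finally the purely quadratic term contracts to $\alpha t(p-2)(|\nabla u|^2+\epsilon)^{(p-6+t)/2}\bigl(\sum_{i,j}u_{x_i}u_{x_j}u_{x_ix_j}\bigr)^2$, which is $I^2_{p,\alpha,t}$. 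The same calculation with $(p,\alpha)$ replaced by $(q,\beta)$ supplies $I_{q,\beta,t}$, and by linearity the interior term equals $I_{p,\alpha,t}+I_{q,\beta,t}$.

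The main bookkeeping obstacle is tracking the many multi-indexed products through the chain rule and verifying that the two ``cross'' terms indeed combine into a single coefficient $(a-2+t)$ (this uses the symmetry $u_{x_ix_j}=u_{x_jx_i}$ to rename summation indices). Once that cancellation is in hand, the identity follows by adding the boundary and interior contributions, with the sign $-F_t$ coming directly from the boundary integral produced in the integration by parts step.
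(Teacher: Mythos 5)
Your proof is correct and takes essentially the same route as the paper's: commute the mixed partials, integrate by parts once to produce the boundary term $-F_t$, then expand the interior product $(\partial_i\psi_j)(\partial_j\phi_i)$ by the chain rule so that the four contractions give $I^3_{a,\gamma,t}$, the two cross terms combining (via symmetry of the Hessian) into $I^1_{a,\gamma,t}$ with coefficient $a-2+t$, and the remaining term giving $I^2_{a,\gamma,t}$. The only cosmetic difference is which dummy index you integrate by parts in, which is immaterial after relabeling.
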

\begin{proof}
 We first note that by integrating by parts we have
  \begin{align}
  \nonumber    &\sum_{i,j=1}^N\int_{\Omega}-\alpha\left(|\nabla u|^2+\epsilon\right)^\frac{p-2}{2}u_{x_i}\left[\left(|\nabla u|^2+\epsilon\right)^\frac{t}{2}u_{x_j}\right]_{x_j,x_i}\\\nonumber
    &  =\sum_{i,j=1}^N\int_{\Omega}\alpha\left[\left(|\nabla u|^2+\epsilon\right)^\frac{p-2}{2}u_{x_i}\right]_{x_j}\left[\left(|\nabla u|^2+\epsilon\right)^\frac{t}{2}u_{x_j}\right]_{x_i}\\
    \label{eq1leme1}
    &  -\sum_{i,j=1}^N\int_{\partial \Omega}\alpha\left(|\nabla u|^2+\epsilon\right)^\frac{p-2}{2}u_{x_i}\left[\left(|\nabla u|^2+\epsilon\right)^\frac{t}{2}u_{x_j}\right]_{x_i}\nu_j\text{ds}.
 \end{align}
where \(\nu\) is the outward unitary normal vector to \(\partial \Omega\).

Moreover, from the Chain Rule, it follows that
 \begin{align}
 \nonumber
   &\sum_{i,j}^N\int_{\Omega}\left[\alpha\left(|\nabla u|^2+\epsilon\right)^\frac{p-2}{2}u_{x_i}\right]_{x_j}\left[\left(|\nabla u|^2+\epsilon\right)^\frac{t}{2}u_{x_j}\right]_{x_i}\\ \nonumber
   & =\int_{\Omega}\alpha \left(|\nabla u|^2+\epsilon\right)^\frac{p-2+t}{2}|D^2u|^2\\ \nonumber
   & +\alpha(p-2)\sum_{i,j,k=1}^N\int_\Omega \left(|\nabla u|^2+\epsilon\right)^\frac{p-4+t}{2}u_{x_i}u_{x_k}u_{x_kx_j}u_{x_jx_i}\\ \nonumber
   & +\alpha t\sum_{i,j,k=1}^N\int_\Omega \left(|\nabla u|^2+\epsilon\right)^\frac{p-4+t}{2}u_{x_ix_j}u_{x_k}u_{x_kx_i}u_{x_j}\\ \nonumber
  & +\alpha(p-2) t\sum_{i,j,k,l=1}^N\int_\Omega \left(|\nabla u|^2+\epsilon\right)^\frac{p-6+t}{2}u_{x_i}u_{x_k}u_{x_kx_j}u_{x_j}u_{x_l}u_{x_lx_i}\\ \nonumber
  & =\alpha\int_\Omega \left(|\nabla u|^2+\epsilon\right)^\frac{p-2+t}{2}|D^2u|^2\\ \nonumber
  &+\alpha(p-2+t)\sum_{j=1}^N\int_\Omega\left(|\nabla u|^2+\epsilon\right)^\frac{p-4+t}{2}\left(\sum_{i=1}^Nu_{x_i}u_{x_jx_i}\right)^2\\ \nonumber
  &+\alpha(p-2)t\int_\Omega\left(|\nabla u|^2+\epsilon\right)^\frac{p-6+t}{2}\left(\sum_{i,j=1}^Nu_{x_i}u_{x_ix_j}u_{x_j}\right)^2\\
  \label{eq2leme1}
   & =I_{p,\alpha,t}^3+I_{p,\alpha,t}^1+I_{p,\alpha,t}^2.
 \end{align}
Therefore, by combining \eqref{eq1leme1} and \eqref{eq2leme1} with a straightforward adaption of \eqref{eq2leme1} for \(q\) and \(\beta\) instead of \(p\) and \(\alpha\), the result follows.
\end{proof}

Our goal now is to control each of the integral terms of the last lemma separately and somehow correlate them to the imbedding estimate given by Lemma \ref{lemp5}. For this purpose, we have to investigate the behavior of each of these terms in what regards to their dependence on $t$. The positivity of the second one follows clearly from the choice for the range of \(p,q,\) and \(t\).

\begin{lemma}
\label{leme2}
 Under our basic assumptions, let  \(u\in W^{3,\tau}\), and \(t\geq 0\),  where $\tau>N$. If $(a,\gamma)=(p,\alpha)$ or $(q,\beta)$, then
  \begin{equation}
  \label{eq3leme2}
   I^2_{p,\alpha,t}\geq 0 \quad \text{and}\quad I^2_{q,\beta,t}\geq 0.
  \end{equation}
 \end{lemma}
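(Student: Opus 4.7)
The plan is to observe that the integrand defining $I^2_{a,\gamma,t}$ is a pointwise product of factors that are each manifestly nonnegative under the given hypotheses, so that nonnegativity of the integral follows trivially without any integration by parts or further manipulation. Concretely, I would write
\[
I^2_{a,\gamma,t} = \gamma\,(a-2)\,t \int_\Omega \bigl(|\nabla u|^2+\epsilon\bigr)^{\frac{a-6+t}{2}} \left(\sum_{i,j=1}^N u_{x_i} u_{x_i x_j} u_{x_j}\right)^{\!2}
\]
and then check the sign of each factor in turn.

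First I would invoke hypothesis \eqref{hip3}: since $p,q>2$, the factor $(a-2)>0$ in both cases $(a,\gamma)=(p,\alpha)$ and $(a,\gamma)=(q,\beta)$; the factor $\gamma$ is nonnegative since $\alpha>0$ and $\beta\geq 0$; and $t\geq 0$ is assumed in the statement. Next, because $\epsilon\in(0,1]$, the quantity $|\nabla u|^2+\epsilon$ is strictly positive everywhere in $\Omega$, so raising it to any real power $(a-6+t)/2$ produces a nonnegative (in fact positive) pointwise factor. Finally, the last bracket is a square, hence nonnegative pointwise.

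Multiplying nonnegative factors and integrating a nonnegative function yields $I^2_{a,\gamma,t}\geq 0$ in both cases, establishing \eqref{eq3leme2}. There is no real obstacle here; the point of regularizing the operator by $\epsilon>0$ is precisely to make powers with possibly negative exponent $(a-6+t)/2$ well defined and integrable, so the restriction on $t$ could in principle be relaxed, but the cleanest argument is the sign-chasing one above. The only subtlety worth flagging is that the regularity $u\in W^{3,\tau}$ with $\tau>N$ guarantees that $D^2u$ is continuous and $\nabla u$ is bounded, so the integrand is locally bounded and the integral makes sense without any further justification.
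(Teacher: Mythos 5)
Your proof is correct and is essentially the argument the paper intends: the paper omits a written proof of this lemma entirely, remarking only that "the positivity of the second one follows clearly from the choice for the range of $p,q,$ and $t$," and your sign-chasing of the factors $\gamma\geq 0$, $a-2>0$, $t\geq 0$, the positive power of $|\nabla u|^2+\epsilon$, and the square is exactly that observation made explicit.
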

Before proceeding to the main estimates of Section \ref{energyestimates}, we introduce some notation and exhibit an algebraic inequality which will be useful to our purposes.
\begin{lemma}
\label{leme3}
 Given \(r\geq 2\) and \(u\in W^{3,\tau}\), where $\tau>N$, consider
 \begin{equation*}
  S_r=\int_{\Omega}\left|\nabla \left(\nabla u|\nabla u|^2+\epsilon\right)^\frac{r-2}{4}\right|^2.
 \end{equation*}
Then, there holds that
 \begin{align}
 \nonumber
  S_r&\leq \int_\Omega \left(|\nabla u|^2+\epsilon\right)^\frac{r-2}{2}|D^2u|^2\\
    \label{eq3obs1}
     &+\frac{(r-2)(r+2)}{4}\sum_{i=1}^N\int_\Omega \left(|\nabla u|^2+\epsilon\right)^\frac{r-4}{2}\left(\sum_{j=1}^Nu_{x_j}u_{x_ix_j}\right)^2.
  \end{align}
\end{lemma}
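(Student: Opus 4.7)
\noindent\textbf{Proof proposal for Lemma \ref{leme3}.} The plan is to compute the integrand of $S_r$ pointwise by expanding the gradient of the vector field $v = \nabla u \,(|\nabla u|^2 + \epsilon)^{(r-2)/4}$ with the product rule, and then absorb the cross terms using a crude bound $|\nabla u|^2 \le |\nabla u|^2 + \epsilon$. First I would introduce the shorthand $w = (|\nabla u|^2+\epsilon)^{(r-2)/4}$ and $A_i = \sum_{j=1}^N u_{x_j} u_{x_j x_i}$, so that a direct chain rule computation gives $w_{x_i} = \tfrac{r-2}{2}(|\nabla u|^2+\epsilon)^{(r-6)/4} A_i$. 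Expanding the derivatives of each component $v_k = u_{x_k} w$ one obtains
\begin{align*}
\sum_{i,k}\left[(u_{x_k} w)_{x_i}\right]^2
&= w^2 |D^2 u|^2 + 2 w \sum_{i} w_{x_i}\sum_{k} u_{x_k} u_{x_k x_i} + |\nabla u|^2 \sum_{i} w_{x_i}^2.
\end{align*}

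Next I would substitute the expression for $w_{x_i}$ into the cross and square terms. This produces the identity
\begin{align*}
\left|\nabla v\right|^2
&= (|\nabla u|^2+\epsilon)^{\frac{r-2}{2}}|D^2 u|^2
+ (r-2)(|\nabla u|^2+\epsilon)^{\frac{r-4}{2}}\sum_{i=1}^N A_i^2 \\
&\quad + \frac{(r-2)^2}{4}|\nabla u|^2 (|\nabla u|^2+\epsilon)^{\frac{r-6}{2}} \sum_{i=1}^N A_i^2.
\end{align*}
The first term matches the one on the right-hand side of \eqref{eq3obs1} exactly, so it remains to control the sum of the last two summands by $\frac{(r-2)(r+2)}{4}(|\nabla u|^2+\epsilon)^{(r-4)/2}\sum_i A_i^2$.

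This is the main (and only slightly delicate) step, and it reduces to a purely algebraic inequality on the coefficients. Factoring $(|\nabla u|^2+\epsilon)^{(r-6)/2}$ out of the two remaining terms and using $|\nabla u|^2 \le |\nabla u|^2 + \epsilon$, I would reduce matters to showing
\begin{equation*}
(r-2)(|\nabla u|^2+\epsilon) + \tfrac{(r-2)^2}{4}|\nabla u|^2
\le \tfrac{(r-2)(r+2)}{4}(|\nabla u|^2+\epsilon),
\end{equation*}
which after clearing $(r-2)$ (trivial for $r=2$, when both sides vanish) becomes $4\epsilon \le (r+2)\epsilon$, true since $r\ge 2$. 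Integrating the pointwise bound over $\Omega$ then yields \eqref{eq3obs1}.

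The main obstacle is bookkeeping: correctly tracking the exponents $(r-2)/4$, $(r-4)/2$ and $(r-6)/2$ produced by successive differentiations of $w$ and collecting the three distinct contributions without sign errors. No integration by parts or boundary data is needed, which is why the hypothesis $u\in W^{3,\tau}$ with $\tau > N$ is used only to ensure the pointwise computations are justified a.e.\ and that $\nabla u$ is continuous, making all the expressions involved well defined.
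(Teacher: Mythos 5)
Your proposal is correct and follows essentially the same route as the paper: expand $\left|\nabla\left(\nabla u\,(|\nabla u|^2+\epsilon)^{\frac{r-2}{4}}\right)\right|^2$ via the product and chain rules into the three terms with exponents $\frac{r-2}{2}$, $\frac{r-4}{2}$, $\frac{r-6}{2}$, bound $|\nabla u|^2\le|\nabla u|^2+\epsilon$ in the last term, and use $(r-2)+\frac{(r-2)^2}{4}=\frac{(r-2)(r+2)}{4}$. The only cosmetic difference is that you package the final step as a single algebraic inequality in the coefficients rather than absorbing the $\epsilon$ first and then summing the constants; both computations are identical in substance.
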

 \begin{proof}
 First, we claim that
  \begin{align*}
  S_r&=\int_\Omega \left(|\nabla u|^2+\epsilon\right)^\frac{r-2}{2}|D^2u|^2
  \\
  &+\sum_{i=1}^N\int_\Omega (r-2)\left(|\nabla u|^2+\epsilon\right)^\frac{r-2}{4}\left(\sum_{j=1}^Nu_{x_j}u_{x_ix_j}\right)^2\\
    &+\sum_{i=1}^N\frac{(r-2)^2}{4}\int_\Omega |\nabla u|^2\left(|\nabla u|^2+\epsilon\right)^\frac{r-6}{2}\left(\sum_{j=1}^Nu_{x_j}u_{x_ix_j}\right)^2.
 \end{align*}
 Indeed, observe that
  \begin{align*}
  &\left[\left(|\nabla u|^2+\epsilon\right)^\frac{r-2}{4}u_{x_j}\right]_{x_i}
  \\
  &=\left(|\nabla u|^2+\epsilon\right)^\frac{r-2}{4}u_{x_ix_j}+\frac{r-2}{2}\left(|\nabla u|^2+\epsilon\right)^\frac{r-6}{4}\sum_{l=1}^N u_{x_l}u_{x_ix_l}u_{x_j}.
 \end{align*}
Thus, it is straightforward to check that
 \begin{align*}
  &\left|\nabla \left(\nabla u(|\nabla u|^2+\epsilon\right)^\frac{r-2}{4}\right|^2
                              =\left(|\nabla u|^2+\epsilon\right)^\frac{r-2}{2}|D^2u|^2\\
                              &+\sum_{i=1}^N(r-2)\left(|\nabla u|^2+\epsilon\right)^\frac{r-4}{2}\left(\sum_{j=1}^N u_{x_j}u_{x_ix_j}\right)^2\\
                              &+\sum_{i=1}^N\frac{(r-2)^2}{4}|\nabla u|^2\left(|\nabla u|^2+\epsilon\right)^\frac{r-6}{2}\left(\sum_{j=1}^Nu_{x_j}u_{x_ix_j}\right)^2.
\end{align*}
At last, for the proof of \eqref{eq3obs1}, it is enough to recall that
\begin{equation*}
   (r-2)+\frac{(r-2)^2}{4}=\frac{(r-2)(r+2)}{4}.\qedhere
  \end{equation*}
\end{proof}
%


%
We are now in position to exhibit the energy estimates which guarantee fractional regularity for the gradient of the solutions of \eqref{eq1}.
For this, in the same fashion of the definition of the exponents $r_i$, see \eqref{exponents}, we consider the following auxiliary exponents
\begin{equation}
\label{auxiliaryexponents}
t_1=r_1-p \mbox{ and } t_2=r_2-q.
\end{equation}
Sometimes, in order to keep track of the associated regularity exponent we will also denote
\begin{equation}
\label{auxiliaryexponents2}
t_3=r_3-q \mbox{ and } t_4=r_4-p.
\end{equation}

Since our differential operator is a $(p,q)$-Laplacian, while contrasted to the estimates obtained in \cite{dmm1} and \cite{dmm2} there are new levels of regularity provided by the interplay between $p$ and $q$. For the sake of clarity, we exploit these possibilities separately. In the next lemma, we concentrate in the regularity provided by a test function which is associated to the $q$-Laplacian part of our differential operator. 
\begin{proposition}
 \label{prope1}
 Given $u\in W^{3,\tau}$, where $\tau>N$, suppose that \eqref{hip1}-\eqref{hip3} hold.
Then, if \(t_1=s(p-2)+2-p\), there exists \(C=C(\alpha,\beta,p,q,s)>0\) such that
 \begin{equation*}
  I_{t_1}=I_{p,\alpha,t_1}+I_{q,\beta,t_1}\geq C S_{r_1},
 \end{equation*}
where \(r_1=s(p-2)+2\), see \eqref{exponents} and $I_{r_1}$ is given by Lemma \ref{leme3}.

Moreover, if $\beta>0$ there exists \(C=C(\alpha,\beta,p,q,s)>0\)
for which
\begin{equation*}
I_{t_1}\geq C S_{r_3},
\end{equation*}
for \(r_3=r_1+q-p=s(p-2)+2+q-p\).
\end{proposition}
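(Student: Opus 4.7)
The plan is to exploit the fact that the exponent $t_1 = s(p-2) + 2 - p = r_1 - p$ is engineered so that the power-type weights inside $I_{p,\alpha,t_1}$ line up exactly with those appearing in the upper bound for $S_{r_1}$ in Lemma \ref{leme3}. Specifically, with this choice $p - 2 + t_1 = r_1 - 2$ and $p - 4 + t_1 = r_1 - 4$, so that $I^{3}_{p,\alpha,t_1}$ recovers $\alpha \int_\Omega (|\nabla u|^2+\epsilon)^{(r_1-2)/2}|D^2 u|^2$ and $I^{1}_{p,\alpha,t_1}$ recovers $\alpha(r_1-2)$ times the $J_1$-integral from \eqref{eq3obs1}. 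The entire argument then reduces to extracting these two terms, discarding everything else via sign considerations, and comparing prefactors against those in Lemma \ref{leme3}.

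Concretely, I would first invoke Lemma \ref{leme2} to dispose of the cross terms $I^{2}_{p,\alpha,t_1} \geq 0$ and $I^{2}_{q,\beta,t_1} \geq 0$, which is legitimate because $t_1 = (s-1)(p-2) \geq 0$ and $p,q > 2$. A short check of the remaining prefactors in $I_{q,\beta,t_1}$ shows they are nonnegative: $I^{3}_{q,\beta,t_1} \geq 0$ trivially, and the $I^{1}$-coefficient equals $\beta(q - 2 + t_1) = \beta(r_3 - 2) \geq 0$ under \eqref{hip3}. Consequently $I_{q,\beta,t_1} \geq 0$, whence
\[
I_{t_1} \;\geq\; I_{p,\alpha,t_1} \;\geq\; \alpha\, J_3 + \alpha(r_1-2)\, J_1,
\]
with $J_3,J_1$ denoting the two integrals on the right-hand side of \eqref{eq3obs1} for $r=r_1$. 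Matching this against the bound $S_{r_1}\leq J_3 + \tfrac{(r_1-2)(r_1+2)}{4}J_1$ delivers the first inequality with $C = \min\{\alpha,\, 4\alpha/(r_1+2)\}$, which is positive since $r_1 > 2$.

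For the second assertion, the same choice $t = t_1$ paired now with $a = q$ shifts the weights by $q-p$: indeed $q - 2 + t_1 = r_3 - 2$ and $q - 4 + t_1 = r_3 - 4$, so $I^{3}_{q,\beta,t_1}$ and $I^{1}_{q,\beta,t_1}$ reproduce exactly the two integrals majorizing $S_{r_3}$ via Lemma \ref{leme3}. Assuming $\beta > 0$ to make the $q$-contribution nondegenerate, I would discard the nonnegative $p$-part $I_{p,\alpha,t_1}$ and, applying Lemma \ref{leme2} once more to kill $I^{2}_{q,\beta,t_1}$, conclude
\[
I_{t_1} \;\geq\; I_{q,\beta,t_1} \;\geq\; \beta\,\tilde J_3 + \beta(r_3-2)\,\tilde J_1 \;\geq\; \tfrac{4\beta}{r_3+2}\, S_{r_3}.
\]

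There is no real obstacle beyond bookkeeping; the whole content of the lemma is the happy choice $t_1 = r_1 - p$, which causes the $p$-Laplacian block of $I_{t_1}$ to generate exactly the Nikolskii-type quantity $S_{r_1}$ and, simultaneously, the $q$-Laplacian block to generate $S_{r_3}$. The only point requiring momentary care is verifying that every term one wishes to drop truly carries a nonnegative sign, which is why one needs both $t_1 \geq 0$ and the sign of the $(q-2+t_1)$-prefactor, both of which follow from \eqref{hip3} and the definition of $r_1$.
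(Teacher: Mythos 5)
Your proposal is correct and follows essentially the same route as the paper: both arguments drop the $I^2$ terms via Lemma \ref{leme2} using $t_1=(s-1)(p-2)\geq 0$, observe that the choice $t_1=r_1-p$ makes the weights in $I^1_{p,\alpha,t_1}$, $I^3_{p,\alpha,t_1}$ (resp. $I^1_{q,\beta,t_1}$, $I^3_{q,\beta,t_1}$) match those bounding $S_{r_1}$ (resp. $S_{r_3}$) in Lemma \ref{leme3}, and then compare prefactors. The paper merely packages the constant-chasing differently, via the weighted combination with $C_{\beta,1},C_{\beta,2}$, arriving at the same conclusion $I_{t_1}\geq C(\alpha S_{r_1}+\beta S_{r_3})$.
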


\begin{proof}
 First, remark that for $t_1=r_1-p$, we have \(t_1\geq 0\). In this way, by Lemma~\ref{leme2}
  \begin{equation}
  \nonumber
    \frac{r_1+2}{4\alpha}I_{p,\alpha,t_1}+C_{\beta,1} I_{q,\beta,t_1}\geq  \frac{r_1+2}{4\alpha}I_{p,\alpha,t_1}^1+ \frac{r_1+2}{4\alpha}I_{p,\alpha,t_1}^3+C_{\beta,1} I_{q,\beta,t_1}^1+C_{\beta,1} I_{q,\beta,t_1}^3,
  \end{equation}

where $C_{\beta,1}= 0$ if $\beta=0$ or \(C_{\beta,1}>\max{\left\{1,\frac{r_3+2}{4\beta}\right\}}\) if $\beta>0$.

Clearly, \(\frac{r_j+2}{4}\geq 1\), for \(j=1,3\), so that
 \begin{equation}\nonumber
    \frac{r_1+2}{4\alpha}I_{p,\alpha,t_1}+C_{\beta,1} I_{q,\beta,t_1}\geq  \frac{r_1+2}{4\alpha}I_{p,\alpha,t_1}^1+ \frac{1}{\alpha}I_{p,\alpha,t_1}^3+C_{\beta,2}\frac{r_3+2}{4}I_{q,\beta,t_1}^1+C_{\beta,2} I_{q,\beta,t_1}^3,
  \end{equation}
for $C_{\beta,2} = 0$ when $\beta=0$ and $\frac{1}{\beta}$ when $\beta>0$.

However, by the choice of \(t_1\), it is clear that \(q-2+t_1=r_3-2\). Thus, by \eqref{eq3obs1}
  \begin{align}
   \nonumber
    &C_{\beta,2} \frac{r_3+2}{4}I^1_{q,\beta,t_1}+C_{\beta,2} I^3_{q,\beta,t_1}
    \\
    &=C_\beta\frac{(r_3+2)(r_3-2)}{4}\sum_{j=1}^N\int_\Omega\left(|\nabla u|^2+\epsilon\right)^\frac{r_3-4}{2}\left(\sum_{i=1}^Nu_{x_ix_j}u_{x_i}\right)^2\\
                      \nonumber
                      &+C_{\beta}\int_\Omega\left(|\nabla u|^2+\epsilon\right)^\frac{r_3-2}{2}|D^2u|^2\geq C_{\beta} S_{r_3},
   \end{align}
   where $C_\beta=1$ if $\beta>0$ or $0$ if $\beta=0$.

In an analogous manner, since \(r_1=t_1+p\), one obtains that
 \begin{align}
 \nonumber
  \frac{r_1+2}{4\alpha}I^1_{p,\alpha,t_1}+\frac{1}{\alpha}I_{p,\alpha,t_1}^3
   \nonumber
             &=\frac{(r_1+2)(r_1-2)}{4}\sum_{j=1}^N\int_\Omega\left(|\nabla u|^2+\epsilon\right)^\frac{r_1-4}{2}\left(\sum_{i=1}^Nu_{x_ix_j}u_{x_i}\right)^2\\
    \nonumber
              &+\int_{\Omega}\left(|\nabla u|^2+\epsilon\right)^\frac{r_1-2}{2}|D^2u|^2\\
              \nonumber
              &\geq S_{r_1}.
 \end{align}

Thus, by 
 combining the last three inequalities, there follows that 
 \begin{equation}
  \nonumber
   I_{t_1}\geq C_1(\alpha S_{r_1}+\beta S_{r_3}),
  \end{equation}
 where  we have set \(C_1=\min{\left\{\frac{4}{r_1+2},\frac{4}{r_3+2},1\right\}}\).
\end{proof}
\[
\]
%

Next, we present a version of Proposition \ref{prope1}, where now, under stronger assumptions on the coefficients and exponents, we are able to consider certain modifications on the estimates which provide us alternative regularity results. This is possible because of the effects of $\beta \Delta^\epsilon_q u$ which allow slightly modified test functions. Also, now we provide extra degrees of fractional regularity which are a result from the interaction between the two differential operators.
\begin{proposition}
 \label{prope2}
Given $u\in W^{3,\tau}$, where $\tau>N$, suppose that \eqref{hip1}-\eqref{hip3} hold and that $\beta>0$. By setting \(t_2=s(q-2)+2-q,\) we obtain \(C>0\) such that
   \begin{equation*}
    I_{t_2}\geq C\left(S_{r_2}+S_{r_4}\right),
   \end{equation*}
where \(C=C(\alpha,\beta,p,q,r)\), \(r_2=s(q-2)+2\) and \(r_4=s(q-2)+2+p-q\).

\end{proposition}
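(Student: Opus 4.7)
The plan is to follow the template of Proposition \ref{prope1}, interchanging the roles of the exponents $p$ and $q$. First I would observe that $t_2=s(q-2)+2-q=(s-1)(q-2)\geq 0$ (since $s\geq 2$ and $q>2$), so that Lemma \ref{leme2} applies and delivers
\[ I^2_{p,\alpha,t_2}\geq 0,\qquad I^2_{q,\beta,t_2}\geq 0. \]
The entire argument rests on the twin algebraic identities
\[ q-2+t_2=r_2-2 \qquad\text{and}\qquad p-2+t_2=r_4-2, \]
where $r_4=r_2+p-q$ by definition. These align the weights in $I^1_{q,\beta,t_2},I^3_{q,\beta,t_2}$ with those appearing in $S_{r_2}$ (Lemma \ref{leme3} for $r=r_2$) and the weights in $I^1_{p,\alpha,t_2},I^3_{p,\alpha,t_2}$ with those appearing in $S_{r_4}$ (Lemma \ref{leme3} for $r=r_4$).

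Next I would apply Lemma \ref{leme3} twice. Reading off the definitions in \eqref{efunc} and using the elementary simplification $(r-2)(r+2)/\bigl(4(a-2+t_2)\bigr)=(r+2)/4$, which is valid because $a-2+t_2=r-2$ in each instance, yields
\[ \beta\,S_{r_2}\leq I^3_{q,\beta,t_2}+\frac{r_2+2}{4}\,I^1_{q,\beta,t_2},\qquad \alpha\,S_{r_4}\leq I^3_{p,\alpha,t_2}+\frac{r_4+2}{4}\,I^1_{p,\alpha,t_2}. \]
Setting
\[ M=\max\left\{\frac{1}{\alpha},\,\frac{r_4+2}{4\alpha},\,\frac{1}{\beta},\,\frac{r_2+2}{4\beta}\right\} \]
and discarding the nonnegative $I^2$ contributions, adding the two inequalities gives
\[ M\,I_{t_2}\geq \frac{1}{\alpha}\,I^3_{p,\alpha,t_2}+\frac{r_4+2}{4\alpha}\,I^1_{p,\alpha,t_2}+\frac{1}{\beta}\,I^3_{q,\beta,t_2}+\frac{r_2+2}{4\beta}\,I^1_{q,\beta,t_2}\geq S_{r_2}+S_{r_4}, \]
so that $I_{t_2}\geq(1/M)(S_{r_2}+S_{r_4})$ with $C:=1/M$ depending only on $\alpha,\beta,p,q,s$, which is exactly the desired estimate.

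The main obstacle is the bookkeeping to ensure that the exponent identities line up correctly; no genuinely new analytic input beyond Proposition \ref{prope1} is required. The hypothesis $\beta>0$ plays its essential role in allowing the factor $1/\beta$ to enter $M$: without it the $q$-contribution would vanish and only $S_{r_4}$ could be recovered, in complete analogy with the $\beta=0$ branch of Proposition \ref{prope1}, where only $S_{r_1}$ survives.
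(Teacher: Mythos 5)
Your proof is correct and follows essentially the same route as the paper's: positivity of the $I^2$ terms from Lemma \ref{leme2}, the exponent identities $q-2+t_2=r_2-2$ and $p-2+t_2=r_4-2$, and Lemma \ref{leme3} applied with $r=r_2$ and $r=r_4$ to absorb $S_{r_2}$ into the $q$-part and $S_{r_4}$ into the $p$-part of $I_{t_2}$. Packaging the constants into a single maximum $M$ rather than bounding $I_{q,\beta,t_2}\geq C_1 S_{r_2}$ and $I_{p,\alpha,t_2}\geq C_2 S_{r_4}$ separately and taking $C=\min\{C_1,C_2\}$, as the paper does, is only a cosmetic difference and yields the same constant.
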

\begin{proof}
 Note that \[(r_2\geq q>2 \mbox{ and } \frac{r_2+2}{4}\geq 1.\] Then, since \(t_2\geq 0\), by \eqref{eq3leme2} in Lemma~\ref{leme2} we obtain \(I^2_{q,\beta,t_2}\geq 0.\) Thus,
 \begin{align*}
   \frac{r_2+2}{4\beta}I_{q,\beta,t_2}&=\frac{r_2+2}{4\beta}\left(I^1_{q,\beta,t_2}+I^2_{q,\beta,t_2}+I^3_{q,\beta,t_2}\right)
   \\
   &\geq \frac{r_2+2}{4\beta}I^1_{q,\beta,t_2}+\frac{r_2+2}{4\beta}I^3_{q,\beta,t_2}\\
                                   &=\frac{(r_2+2)(r_2-2)}{4} \sum_{i=1}^N\int_{\Omega}\left(|\nabla u|^2+\epsilon\right)^\frac{r_2-4}{2}\left(\sum_{j=1}^Nu_{x_ix_j}u_{x_i}\right)^2\\
                                   &+\int_\Omega\left(|\nabla u|^2+\epsilon\right)^\frac{r_2-2}{2}|D^2u|\\
                                   &\geq S_{r_2},
 \end{align*}
where in the last inequality \eqref{eq3obs1} is used. Hence
  \begin{equation}
   \nonumber
     I_{q,\beta,t_2}\geq C_1 S_{r_2},
   \end{equation}
where \(\displaystyle C_1=\frac{4\beta}{r_2+2}\).

By applying similar arguments, we arrive at
    \( I_{p,\alpha,t_2}\geq C_2S_{r_4}\),
where \(C_2=\frac{4\alpha}{r_4+2}\). Thence, by choosing \(C=\min{\left\{C_1,C_2\right\}}\), from the last inequalities 
 we conclude that
  \begin{equation*}
    I_{t_2}=I_{p,\alpha,t_2}+I_{q,\beta,t_2}\geq C\left(S_{r_2}+S_{r_4}\right).\qedhere
  \end{equation*}
\end{proof}

We are now in position to prove our ultimate set of energy inequalities which bind the boundary estimates refinements done at the previous section and the energy inequalities obtained above, what will be sufficient to prove our main results.
We start by the Primary Energy Bounds, i.e., the estimates which are valid regardless of possible regularizing phenomena due to the influence of $\beta \Delta_q^\epsilon u$.
\begin{proposition}
\label{primaryfractional}
Consider $u$ the unique solution of \eqref{eq1ap}. Then, the following estimate holds true
\[ \|u\|_{\mathcal{N}^{1+\frac{2}{r_1},r_1}}^{r_1} \leq C \bigg (\|f\|^{\frac{r_1}{p-1}}_{W^{\sigma,s}}+\|f\|^{\frac{r_1}{p-1}}_{L^{p^\prime}}+\|f\|^s_{L^s} +\|f\|^2_{L^s}+\epsilon^{\frac{t_1+1}{2}} \|f\|_{W^{\sigma,s}} +\epsilon^{\frac{s(r_1-2)}{2}}\bigg) \]
where $C=C(N,p,s,\alpha,\beta,\sigma,\Omega)>0$.

\end{proposition}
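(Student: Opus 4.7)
My plan is to apply a Bernstein-type test to the approximate problem \eqref{eq1ap}. Starting from the smooth solution $u\in W^{3,\tau}$ provided by Lemma \ref{approximatesolutions}, I would integrate the equation against $-\Delta^\epsilon_{t_1+2}u$ with $t_1:=r_1-p=s(p-2)+2-p\ge 0$. Lemma \ref{leme1} reorganises the bulk into
\begin{equation*}
I_{p,\alpha,t_1}+I_{q,\beta,t_1}-F_{t_1}=\int_\Omega(\alpha\phi_p+\beta\phi_q)\cdot\nabla(-\Delta^\epsilon_{t_1+2}u)\,dx,
\end{equation*}
where $\phi_a:=(|\nabla u|^2+\epsilon)^{(a-2)/2}\nabla u$. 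One further integration by parts on the right-hand side, using $-\mathrm{div}(\alpha\phi_p+\beta\phi_q)=f_\epsilon$ together with the identification of the new boundary residue with $-F_{t_1}$ (via the $D_uG_u$-structure extracted in Lemmata \ref{lemb2}--\ref{lemb3}), delivers the working identity
\begin{equation*}
I_{p,\alpha,t_1}+I_{q,\beta,t_1}=\int_\Omega\nabla f_\epsilon\cdot(|\nabla u|^2+\epsilon)^{t_1/2}\nabla u\,dx-\int_{\partial\Omega}f_\epsilon(|\nabla u|^2+\epsilon)^{t_1/2}\tfrac{\partial u}{\partial\nu}\,dS.
\end{equation*}

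On the left, Proposition \ref{prope1} yields $I_{p,\alpha,t_1}+I_{q,\beta,t_1}\ge CS_{r_1}$, and Lemma \ref{lemp5} converts $S_{r_1}$ into $\|u\|^{r_1}_{\mathcal{N}^{1+2/r_1,r_1}}$ up to a $W^{1,p}$ remainder; the latter is absorbed by the classical energy estimate (test \eqref{eq1ap} with $u$ itself and apply Poincar\'e plus Young), which gives $\|u\|^{r_1}_{W^{1,p}}\le C(\|f\|^{r_1/(p-1)}_{L^{p'}}+1)$, matching the $L^{p'}$ contribution. On the right, the volume integral is bounded by the fractional duality between $W^{\sigma,s}$ and its dual (permitted by $\sigma>1/s$), followed by Young's inequality tuned to the exponent $r_1$; this produces an absorbable $\eta S_{r_1}$ piece together with the pure data terms $\|f\|^s_{L^s}+\|f\|^2_{L^s}+\epsilon^{s(r_1-2)/2}$. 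The boundary integral is estimated exactly as in Lemma \ref{lemb3} by
\begin{equation*}
C\|f_\epsilon\|_{L^s(\partial\Omega)}\bigl(\|\nabla u\|^{t_1+1}_{L^{s'(t_1+1)}(\partial\Omega)}+\epsilon^{(t_1+1)/2}\bigr),
\end{equation*}
after which the trace-Nikolskii embedding of Lemma \ref{lemb4} upgrades the gradient trace into $\|u\|^{t_1+1}_{\mathcal{N}^{1+2/r_1,r_1}}$. Because $t_1+1=r_1-(p-1)<r_1$, a last Young's inequality absorbs this subcritical power into $\|u\|^{r_1}_{\mathcal{N}}$, paying a cost $C\|f\|^{r_1/(p-1)}_{L^s(\partial\Omega)}\lesssim\|f\|^{r_1/(p-1)}_{W^{\sigma,s}}$ by the trace theorem; the leftover $\epsilon^{(t_1+1)/2}\|f_\epsilon\|_{L^s(\partial\Omega)}$ furnishes the remaining $\epsilon$-dependent term.

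The main technical obstacle is the Dirichlet boundary bookkeeping: unlike in the Neumann situation of \cite{dmm1,dmm2}, the highly nonlinear boundary residues cannot be made to vanish, and must instead be recast through the $D_uG_u$ identification of Lemmata \ref{lemb2}--\ref{lemb3} as integrals of $f_\epsilon$ against gradient traces, where Lemma \ref{lemb4} provides the precise embedding needed to convert traces into Nikolskii quantities. The crucial arithmetic feature making the whole scheme close is the strict subcriticality $t_1+1<r_1$, without which the final Young-type absorption step would not be permissible.
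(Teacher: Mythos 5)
Your skeleton coincides with the paper's: test \eqref{eq1ap} with $-\Delta^\epsilon_{t_1+2}u$, use Lemma \ref{leme1} to produce $I_{t_1}$ and the boundary functional $F_{t_1}$, coerce $I_{t_1}\geq CS_{r_1}$ via Proposition \ref{prope1}, pass from $S_{r_1}$ to the Nikolskii norm via Lemma \ref{lemp5} with the $W^{1,p}$ remainder controlled by testing with $u$ itself, and absorb the boundary contribution through Lemmas \ref{lemb2}--\ref{lemb4} and Young's inequality with the conjugate pair $\frac{r_1}{t_1+1}$, $\frac{r_1}{p-1}$. All of that is correct and is exactly the paper's route.

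The gap is in your treatment of the volume term. You integrate by parts once more to reach $\int_\Omega\nabla f_\epsilon\cdot(|\nabla u|^2+\epsilon)^{t_1/2}\nabla u$ and then invoke a ``fractional duality between $W^{\sigma,s}$ and its dual.'' Under \eqref{hip2} the datum lies only in $W^{\sigma,s}$ with $\sigma>\frac{1}{s}$, possibly far below $1$, so $\nabla f_\epsilon$ is a distribution of negative order $\sigma-1$; pairing it with the field $(|\nabla u|^2+\epsilon)^{t_1/2}\nabla u$ would require that field to lie in $W^{1-\sigma,s^\prime}$, a regularity that is neither established nor absorbable into $\eta S_{r_1}$ as stated, and which in any case cannot yield the pure data terms $\|f\|^s_{L^s}+\|f\|^2_{L^s}+\epsilon^{\frac{s(r_1-2)}{2}}$ appearing in the statement. (The hypothesis $\sigma>\frac{1}{s}$ is used in the paper only to control the trace $\|f_\epsilon\|_{L^s(\partial\Omega)}$, not for any interior duality.) The paper instead keeps $f_\epsilon$ undifferentiated: it bounds $|\Delta^\epsilon_{t_1+2}u|\leq C(|\nabla u|^2+\epsilon)^{t_1/2}|D^2u|$, splits the weight as $(|\nabla u|^2+\epsilon)^{\frac{r_1-2}{4}}(|\nabla u|^2+\epsilon)^{\frac{\omega_1}{2}}$ with $\omega_1=2-p+\frac{r_1-2}{2}\geq0$, applies H\"older with exponents $s$, $2$, $\frac{2s}{s-2}$ to obtain $\|f_\epsilon\|_{L^s}S_{r_1}^{1/2}(\cdots)$, and closes by Young after distinguishing the cases $\|u\|_{\mathcal{N}^{1+2/r_1,r_1}}\geq1$ and $<1$. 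A secondary bookkeeping slip: the residue $F_{t_1}$ is not identifiable with your new boundary term $-\int_{\partial\Omega}f_\epsilon(|\nabla u|^2+\epsilon)^{t_1/2}\frac{\partial u}{\partial\nu}\,dS$; in the paper $F_{t_1}$ is the only boundary term, and Lemmas \ref{lemb1}--\ref{lemb3} reduce it, via the moving frame and the equation itself, to a quantity obeying the same bound --- the two objects admit the same estimate but are not equal, so your ``working identity'' as written does not hold.
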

\begin{proof}
As a starting point, remark that by taking $u$ as a test function in \eqref{eq1ap}, by the Poincar\'{e} inequality we clearly have
\begin{equation}
\label{eq1pf}
\|\nabla u\|_{L^p}\leq \dfrac{1}{\alpha ^{p-1}}\|f_\epsilon\|^{\frac{1}{p-1}}_{L^{p^\prime}}.
\end{equation}

Moreover, by Lemma \ref{leme1}, if $-\Delta_{t_1+2}^\epsilon u$ is taken as a test function, we get
\begin{equation}
\label{eq2pf}
I_{t_1}=I_{p,\alpha,t_1}+I_{q,\beta,t_1}=-\int_\Omega f_\epsilon \Delta^\epsilon_{t_1+2}u+F_{t_1},
\end{equation}
where $I_{p,\alpha,t_1}$ and $I_{q,\beta,t_3}$ were given in \eqref{efunc} and $F_{t_1}$, the boundary term, in \eqref{befunc}.

In addition, observe that by combining Lemmas \ref{lemb3} and \ref{lemb4}, \cite[Theorem 1.5.1.2, pag. 37]{grisvard} and  Young's inequality for $\frac{r_1}{t_1+1}$ and $\frac{r_1}{p-1}$, there follows that
\begin{align}
\nonumber
|F_{t_1}|\leq &C\|f_\epsilon\|_{L^s(\partial \Omega)} \bigg( \|\nabla u\|^{t_1+1}_{L^{s^\prime(t_1+1)}(\partial \Omega)}+\epsilon^{\frac{t_1+1}{2}}\bigg)
\\
\nonumber
&\leq C\|f_\epsilon\|_{W^{\sigma,s}( \Omega)}\bigg(\|u\|^{t_1+1}_{\mathcal{N}^{1+\frac{2}{r_1},r_1}(\Omega)}+\epsilon^{\frac{t_1+1}{2}}\bigg)
\\
\label{eq3pf}
&\leq \dfrac{C}{\eta^{\frac{r_1-p+1}{p-1}}}\|f\|^{\frac{r_1}{p-1}}_{W^{\sigma,s}}+\epsilon^{\frac{t_1+1}{2}} \|f\|_{W^{\sigma,s}}+\eta\|u\|^{r_1}_{\mathcal{N}^{1+\frac{2}{r_1},r_1}},
\end{align}
since $t_1=r_1-p$, where $C=C(N,p,s,\sigma,\Omega)>0$, and $\eta>0$.

At this point, we proceed to estimate
\[\int_\Omega -f_\epsilon \Delta^\epsilon_{t_1+2} u.\]
In this fashion, by setting
\[\omega_1= 2-p+\frac{r_1-2}{2},\]
remark that $\omega_1\geq0$, since $s\geq2$. Further, for a given $C=C(p,s)>0$,
\begin{align}
\label{eq4pf}
\int_\Omega -f_\epsilon \Delta^\epsilon_{t_1+2} u&\leq C \int_\Omega |f_\epsilon| \big( |\nabla u|^2+\epsilon\big)^{\frac{t_1}{2}}|D^2u|
\\
\nonumber
&= C \int_\Omega |f_\epsilon| \big( |\nabla u|^2+\epsilon\big)^{\frac{r_1-2}{4}}|D^2u|\big( |\nabla u|^2+\epsilon\big)^{\frac{\omega_1}{2}}
\end{align}
since $t_1=\frac{r_1-2}{2}+\omega_1$.

Suppose for instance that $s>2$ so that $w_1>0$.
At this point, for the sake of clarity, we are going to consider separately the cases
\begin{equation}
\label{eq5pf}
\|u\|_{\mathcal{N}^{1+\frac{2}{r_1},r_1}}\geq 1
\end{equation}
and
\begin{equation}
\label{eq6pf} \|u\|_{\mathcal{N}^{1+\frac{2}{r_1},r_1}}< 1,
\end{equation}
where, of course,  our purpose is to obtain a uniform estimate which holds for both cases.

Now, let us admit that \eqref{eq5pf} holds. Thence, by recalling that
\[\frac{s \omega_1}{s-2}=\frac{r_1-2}{2}\]
and by applying H\"{o}lder's and Young's inequalities for $s$, $2$ and $\frac{2s}{s-2}$  in the right-hand side of \eqref{eq4pf} we get
\begin{align*}
\int_\Omega -f_\epsilon \Delta^\epsilon_{t_1+2} u&\leq \frac{C}{\eta^{\frac{s}{s^\prime}}}  \|f_\epsilon\|^s_{L^s} +\eta\int_\Omega \big( |\nabla u|^2+\epsilon\big)^{\frac{r_1-2}{2}}|D^2u|^2
\\
&+\eta\int_\Omega \big( |\nabla u|^2+\epsilon\big)^{\frac{r_1-2}{2}}.
\end{align*}
In addition, by combining the H\"{o}lder inequality and \eqref{eq5pf}, it is clear that
\begin{align*}
\int_\Omega |\nabla u|^{r_1-2} &\leq \bigg( \int_\Omega |\nabla u|^{r_1}\bigg)^{\frac{r_1-2}{r_1}}|\Omega|^{\frac{2}{r_1}}
\\
&\leq C\|u\|^{r_1-2}_{\mathcal{N}^{1+\frac{2}{r_1},r_1}}
\\
&\leq C\|u\|^{r_1}_{\mathcal{N}^{1+\frac{2}{r_1},r_1}},
\end{align*}
where $C=C(p,s,\Omega)>$.

Further, by recalling the definition of $S_{r_1}$, it is clear that
\[ \int_\Omega \big( |\nabla u|^2+\epsilon\big)^{\frac{r_1-2}{2}}|D^2u|^2 \leq S_{r_1},\]
see Lemma \ref{leme3}.

Thence, by the latter inequalities, there follows
\begin{equation}
\label{eq7pf}
-\int_\Omega f_\epsilon \Delta^\epsilon_{t_1+2} u \leq C \bigg (\frac{\|f\|^s_{L^s}}{\eta^{\frac{s}{s^\prime}}}+\eta \epsilon^{\frac{s(r_1-2)}{2}} + \eta \|u\|^{r_1}_{\mathcal{N}^{1+\frac{2}{r_1},r_1}}+\eta S_{r_1} \bigg),
\end{equation}
for $C=C(p,s,\Omega)>0$.

In the sequel, let us assume the validity of \eqref{eq6pf}, what in particular assures that
\[\int_\Omega |\nabla u|^{r_1} \leq C,\]
since $\mathcal{N}^{1+\frac{2}{r_1},r_1}\hookrightarrow W^{1,r_1}$, regardless of our choice of norm for this Nikolskii space. Before going any further, we must stress that our goal is to obtain an estimate for $\|u\|_{\mathcal{N}^{1+\frac{2}{r_1},r_1}}$ in terms of $f$. Thereby, from \eqref{eq4pf} and the definition of $S_{r_1}$, we arrive at
\begin{align}
\nonumber
\int_\Omega -f_\epsilon \Delta_{t_1+2}^\epsilon u  &\leq C \|f_\epsilon\|_{L^s}\bigg(\int_\Omega \big( |\nabla u|^2+\epsilon\big)^{\frac{r_1-2}{2}}|D^2u|^2\bigg)^{\frac{1}{2}}\bigg(\int_\Omega \big( |\nabla u|^2+\epsilon\big)^{\frac{r_1-2}{2}}\bigg)^{\frac{s-2}{2s}}
\\
\nonumber
&\leq C \|f_\epsilon\|_{L^s} S_{r_1}^{\frac{1}{2}}\bigg(\int_\Omega  |\nabla u|^{r_1-2}+|\Omega|\bigg)^{\frac{s-2}{2s}}
\\
\nonumber
&\leq C \|f_\epsilon\|_{L^s} S_{r_1}^{\frac{1}{2}}\bigg(\|\nabla u\|_{L^{r_1}}^{\frac{r_1-2}{2}}+|\Omega|\bigg)^{\frac{s-2}{2s}}
\\
\label{eq8pf}
&\leq \dfrac{C}{\eta^2} \|f\|^2_{L^s} +\eta S_{r_1},
\end{align}
for $C=C(N,p,s,\Omega)>0$, where we employed H\"{o}lder's inequality twice, for the triple $2$, $s$,$\frac{2s}{s-2}$, and for the couple  $\frac{r_1}{r_1-2}$,$\frac{r_1}{2}$. Remark that $S_{r_1}$ absorbs the $\epsilon$'s term.

Hence, by combining \eqref{eq7pf} and \eqref{eq8pf}, we obtain
\begin{align}
\nonumber
\int_\Omega -f_\epsilon \Delta_{t_1+2}^\epsilon u  &\leq C\bigg(\frac{\|f\|^s_{L^s}}{\eta^{\frac{s}{s^\prime}}}+\dfrac{\|f\|^2_{L^s}}{\eta^2}  + \eta \|u\|^{r_1}_{\mathcal{N}^{1+\frac{2}{r_1},r_1}} +2\eta S_{r_1}+\eta \epsilon^{\frac{s(r_1-2)}{2}} \bigg)
\end{align}
and thus, by combining \eqref{eq2pf} and \eqref{eq3pf}, there follows that
\begin{align*}I_{t_1}-2\eta S_{r_1} -2\eta\|u\|^{r_1}_{\mathcal{N}^{1+\frac{2}{r_1},r_1}}&\leq C\bigg(\dfrac{\|f\|^{\frac{r_1}{p-1}}_{W^{\sigma,s}}}{\eta^{\frac{r_1-p+1}{p-1}}}+\frac{\|f\|^s_{L^s}}{\eta^{\frac{s}{s^\prime}}}+\dfrac{\|f\|^2_{L^s}}{\eta^2}  \\
&+\epsilon^{\frac{t_1+1}{2}} \|f\|_{W^{\sigma,s}}+\eta \epsilon^{\frac{s(r_1-2)}{2}}\bigg).
\end{align*}
However, since, by Lemma \ref{lemp5} and Poincar\'{e}'s inequality
\[S_{r_1}\geq C\bigg(\|u\|^{r_1}_{\mathcal{N}^{1+\frac{2}{r_1},r_1}}-\|\nabla u\|^{r_1}_{L^p}\bigg),\]
and Proposition \ref{prope1},
\[ I_{t_1}\geq C S_{r_1},\]
it is clear from the latter inequalities that, for $\eta>0$ sufficiently small,
\[ \|u\|_{\mathcal{N}^{1+\frac{2}{r_1},r_1}}^{r_1} \leq C \bigg (\|f\|^{\frac{r_1}{p-1}}_{W^{\sigma,s}}+\|f\|^{\frac{r_1}{p-1}}_{L^{p^\prime}}+\|f\|^s_{L^s} +\|f\|^2_{L^s}+\epsilon^{\frac{t_1+1}{2}} \|f\|_{W^{\sigma,s}} +\epsilon^{\frac{s(r_1-2)}{2}}\bigg), \]
where we have used \eqref{eq1pf}.

The case $s=2$ is absolutely analogous, where the only difference is that, since in this case $w_1=0$ we are able to obtain our estimates for $\int_\Omega -f_\epsilon \Delta^\epsilon_{t_1+2} $ directly from \eqref{eq4pf}.
\end{proof}
Under additional assumptions, it is possible to enlarge our set of estimates what allows us to take into account other levels of regularity for solutions of \eqref{eq1ap} or \eqref{eq1}, the limit case, which appear due to the interactions between the degenerated operators.
\begin{proposition}
\label{secondaryfractional}
Consider $u$ the unique solution of \eqref{eq1ap} and suppose that $\beta>0$.
\begin{itemize}
\item[(a)] For the pair $(r_2,t_2)$, the following estimate holds
\begin{equation}
\nonumber \|u\|_{\mathcal{N}^{1+\frac{2}{r_2},r_2}}^{r_2} \leq C \bigg (\|f\|^{\frac{r_2}{q-1}}_{W^{\sigma,s}}+\|f\|^{\frac{r_2}{q-1}}_{L^{q^\prime}}+\|f\|^s_{L^s} +\|f\|^2_{L^s}+\epsilon^{\frac{t_2+1}{2}} \|f\|_{W^{\sigma,s}} +\epsilon^{\frac{s(r_2-2)}{2}}\bigg).
\end{equation}
\item[(b)] If $s\geq \frac{q+p-4}{p-2}$ , then
\[
\nonumber \|u\|_{\mathcal{N}^{1+\frac{2}{r_3},r_3}}^{r_3} \leq C \bigg (\|f\|^{\frac{r_3}{q-1}}_{W^{\sigma,s}}+\|f\|^{\frac{r_3}{q-1}}_{L^{q^\prime}}+\|f\|^{\tau}_{L^s} +\|f\|^2_{L^s}+\epsilon^{\frac{t_3+1}{2}} \|f\|_{W^{\sigma,s}} +\epsilon^{\frac{s(r_3-2)}{2}}\bigg)
\]
where
\[\tau= \frac{s(p-2)+q-p}{q-2}.\]
\item[(c)]Further, if
\[p<q<p+1 \mbox{ and }s\geq 1 + \dfrac{1}{1+p-q}\]
then, there holds that
\[ \|u\|_{\mathcal{N}^{1+\frac{2}{r_4},r_4}}^{r_4} \leq C \bigg (\|f\|^{\frac{r_4}{p-1}}_{W^{\sigma,s}}+\|f\|^{\frac{r_4}{p-1}}_{L^{p^\prime}}+\|f\|^{\rho}_{L^{\rho}} +\|f\|^2_{L^s}+\epsilon^{\frac{t_4+1}{2}} \|f\|_{W^{\sigma,s}} +\epsilon^{\frac{s(r_4-2)}{2}}\bigg) \]
where \[\rho= \frac{s(q-2)+p-q}{p-2}.\]
\end{itemize}
In all the latter cases, $C=C(N,p,q,s,\alpha,\beta,\sigma,\Omega)>0$.
\end{proposition}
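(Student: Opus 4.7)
The plan is to adapt the proof of Proposition \ref{primaryfractional} to each of the three cases by suitably changing the test function and the lower bound supplied by Propositions \ref{prope1}--\ref{prope2}. Starting from the smooth approximate solution of Lemma \ref{approximatesolutions}, I use $u$ itself as test function to bound $\|\nabla u\|_{L^q}$ via $\beta>0$ for (a) and (b), and $\|\nabla u\|_{L^p}$ via $\alpha>0$ for (c); I then test against $-\Delta^\epsilon_{t+2}u$, choosing $t=t_2$ in (a) and (c) and $t=t_1$ in (b). Lemma \ref{leme1} gives $I_t=-\int f_\epsilon\Delta^\epsilon_{t+2}u+F_t$, and the appropriate lower bound is $I_{t_2}\geq CS_{r_2}$ in (a), $I_{t_1}\geq CS_{r_3}$ in (b), and $I_{t_2}\geq CS_{r_4}$ in (c), provided by Proposition \ref{prope2}, by the second branch of Proposition \ref{prope1} (requiring $\beta>0$), and again by Proposition \ref{prope2}, respectively. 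The boundary term $F_t$ is handled through Lemmas \ref{lemb3}--\ref{lemb4} using the identities $t_1=t_3$ and $t_2=t_4$, so that $F_t$ is controlled via the Nikolskii norm of order $r_3$ in (b) or $r_4$ in (c); case (c) is precisely where the hypothesis $p<q<p+1$ and $s\geq 1+1/(1+p-q)$ is forced upon us by the imbedding of Lemma \ref{lemb4} for $i=4$.

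The core step is the estimate of $\int f_\epsilon\Delta^\epsilon_{t+2}u$. Writing $t=(r_i-2)/2+\omega_i$, one computes $\omega_2=(s-2)(q-2)/2\geq 0$, $\omega_3=(s(p-2)+4-p-q)/2$ and $\omega_4=(s(q-2)+4-p-q)/2$; the threshold in (b), $s\geq(p+q-4)/(p-2)$, is exactly $\omega_3\geq 0$, while the hypothesis of (c) implies both $\omega_4\geq 0$ and the imbedding of Lemma \ref{lemb4} for $i=4$. Decomposing $|f_\epsilon|\cdot(|\nabla u|^2+\epsilon)^{(r_i-2)/4}|D^2u|\cdot(|\nabla u|^2+\epsilon)^{\omega_i/2}$ and applying H\"older with the conjugate triple $(s,2,2s/(s-2))$ in (a), $(\tau,2,2\tau/(\tau-2))$ with $\tau=(r_3-2)/(q-2)$ in (b), and $(\rho,2,2\rho/(\rho-2))$ with $\rho=(r_4-2)/(p-2)$ in (c), one obtains a bound of the shape $C\|f_\epsilon\|_{L^\sharp}S_{r_i}^{1/2}\|u\|^{\omega_i}_{\mathcal{N}^{1+2/r_i,r_i}}$, where $\sharp\in\{s,\tau,\rho\}$; the algebraic identities $\omega_2\cdot 2s/(s-2)=r_2-2$, $\omega_3=(q-2)(\tau-2)/2$ and $\omega_4=(p-2)(\rho-2)/2$ are what make the third H\"older factor land exactly on $(|\nabla u|^2+\epsilon)^{(r_i-2)/2}$, whose integral is dominated by $\|u\|^{r_i-2}_{\mathcal{N}^{1+2/r_i,r_i}}$ by Lemma \ref{lemp5} and interpolation. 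Three-factor Young with the same triple then absorbs $S_{r_i}^{1/2}$ into $\eta S_{r_i}$ and $\|u\|^{\omega_i}_{\mathcal{N}}$ into $\eta\|u\|^{r_i-2}_{\mathcal{N}}\leq C+\eta\|u\|^{r_i}_{\mathcal{N}}$, leaving, respectively, $\|f\|^s_{L^s}$, $\|f\|^\tau_{L^\tau}\leq C\|f\|^\tau_{L^s}$ (since $\tau\leq s$ under our assumptions), and $\|f\|^\rho_{L^\rho}$ on the data side.

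Finally, Young applied to the boundary product $\|f\|_{W^{\sigma,s}}\|u\|^{t+1}_{\mathcal{N}^{1+2/r_i,r_i}}$ with exponent $r_i/(r_i-t-1)$ produces the $\|f\|^{r_i/(q-1)}_{W^{\sigma,s}}$ terms in (a) and (b) and the $\|f\|^{r_4/(p-1)}_{W^{\sigma,s}}$ term in (c); Lemma \ref{lemp5} converts the $S_{r_i}$ bound into one for $\|u\|^{r_i}_{\mathcal{N}^{1+2/r_i,r_i}}$ modulo $\|\nabla u\|^{r_i}_{L^p}$ or $\|\nabla u\|^{r_i}_{L^q}$, which is in turn dominated via the test with $u$ and yields the $\|f\|^{r_i/(q-1)}_{L^{q'}}$ or $\|f\|^{r_i/(p-1)}_{L^{p'}}$ terms. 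The low-norm branch $\|u\|_{\mathcal{N}^{1+2/r_i,r_i}}<1$ is handled by a direct Cauchy-Schwarz on $\int f_\epsilon\Delta^\epsilon_{t+2}u$, which generates the $\|f\|^2_{L^s}$ terms. I expect the main obstacle to be case (b), where one must \emph{cross} the natural pairing and extract the $q$-flavoured $S_{r_3}$ from the $p$-flavoured energy $I_{t_1}$: this works precisely because $\beta>0$ activates the second branch of Proposition \ref{prope1} and because the identity $\tau(q-2)=r_3-2$ cleanly aligns the H\"older exponents with the decomposition $t_1=(r_3-2)/2+\omega_3$, a matching which would fail for the seemingly more natural triple $(s,2,2s/(s-2))$ used in (a).
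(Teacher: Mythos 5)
Your proposal is correct and follows essentially the same route as the paper: test with $-\Delta^\epsilon_{t+2}u$ for $t=t_2$ in (a), $t=t_3=t_1$ in (b) and $t=t_4=t_2$ in (c), invoke Proposition \ref{prope2} for $S_{r_2}$ and $S_{r_4}$ and the second ($\beta>0$) branch of Proposition \ref{prope1} for $S_{r_3}$, control the boundary term through Lemmas \ref{lemb3}--\ref{lemb4} and Young with exponents $\frac{r_i}{t_i+1}$, $\frac{r_i}{q-1}$ (resp.\ $\frac{r_i}{p-1}$), and run the three-factor H\"older/Young argument with exactly the exponents $\tau=\frac{r_3-2}{q-2}$ and $\rho=\frac{r_4-2}{p-2}$ that the paper uses, including the split on $\|u\|_{\mathcal{N}^{1+2/r_i,r_i}}\gtrless 1$. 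All the algebraic identities you quote ($t_1=t_3$, $t_2=t_4$, $\omega_i=\frac{(r_i-2)(\sharp-2)}{2\sharp}$, $\tau\le s\le\rho$) match the paper's computations.
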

\begin{proof}

First of all, remark that, from the definition of $\mathcal{N}^{1+\frac{2}{r_i},r_i}$, for $i=2$ or $3$, we have to estimate $u$ in $W^{1,q}$. However, since $\beta>0$, by taking $u$ as a test function we have
\begin{equation}
\label{eq1sf}
\|\nabla u\|_{L^q}\leq \dfrac{1}{\beta ^{q-1}}\|f_\epsilon\|^{\frac{1}{q-1}}_{L^{q^\prime}}.
\end{equation}

\subsection*{Proof of (a)} Since $r_2=s(q-2)+2$ and $t_2=r_2-p$, this proof is entirely analogous to the proof of Proposition \ref{primaryfractional}, and is left to the reader. Remark that here we apply Proposition \ref{prope2}, instead of Proposition \ref{prope1}.

\subsection*{Proof of (b)} In this case, as $r_3=s(p-2)+q-p+2$ and $t_3=r_3-p$, there are certain minor modifications in the proof.

Indeed, once again we apply Lemma \ref{leme1} and take  $-\Delta_{t_3+2}^\epsilon u$ as a test function, so that
\begin{equation}
\label{eq2sf}
I_{t_3}=I_{p,\alpha,t_3}+I_{q,\beta,t_3}=-\int_\Omega f_\epsilon \Delta^\epsilon_{t_3+2}u+F_{t_3},
\end{equation}
where the functionals were given in \eqref{efunc} and the boundary term, in \eqref{befunc}.

Moreover, by the very same argument given in \eqref{eq3pf}, we get
\begin{align}
\nonumber
|F_{t_3}|\leq &C\|f_\epsilon\|_{L^s(\partial \Omega)} \bigg( \|\nabla u\|^{t_3+1}_{L^{s^\prime(t_3+1)}(\partial \Omega)}+\epsilon^{\frac{t_3+1}{2}}\bigg)
\\
\label{eq3sf}
&\leq \dfrac{C}{\eta^{\frac{r_3-q+1}{q-1}}}\|f\|^{\frac{r_3}{q-1}}_{W^{\sigma,s}}+\epsilon^{\frac{t_3+1}{2}} \|f\|_{W^{\sigma,s}}+\eta\|u\|^{r_3}_{\mathcal{N}^{1+\frac{2}{r_3},r_3}}.
\end{align}

Now, in order to handle
\[\int_\Omega -f_\epsilon \Delta^\epsilon_{t_3+2} u\]
we consider
\[\omega_4= 2-q+\frac{r_3-2}{2},\]
where since $s\geq \frac{q+p-4}{p-2}$, we have $\omega_2\geq 0$.

In this fashion, for a given $C=C(p,s)>0$,
\begin{align}
\label{eq4sf}
\int_\Omega -f_\epsilon \Delta^\epsilon_{t_3+2} u&\leq C \int_\Omega |f_\epsilon| \big( |\nabla u|^2+\epsilon\big)^{\frac{t_3}{2}}|D^2u|
\\
\nonumber
&= C \int_\Omega |f_\epsilon| \big( |\nabla u|^2+\epsilon\big)^{\frac{r_3-2}{4}}|D^2u|\big( |\nabla u|^2+\epsilon\big)^{\frac{\omega_2}{2}}
\end{align}
since $t_3=\frac{r_3-2}{2}+\omega_2$.

However, on one hand, if $\omega_2=0$, or equivalently  $s=\frac{q+p-4}{p-2}$, observe that the combination between the Cauchy-Schwartz and the Young inequalities gives us
\begin{equation}
\label{eq44sf}
\int_\Omega -f_\epsilon \Delta^\epsilon_{t_3+2} u \leq \frac{C}{\eta^2}  \|f_\epsilon\|^2_{L^2} +\eta\int_\Omega \big( |\nabla u|^2+\epsilon\big)^{\frac{r_3-2}{2}}|D^2u|^2.
\end{equation}
On the other hand, if $\omega_2>0$, or equivalently  $s>\frac{q+p-4}{p-2}$, we once again have to deal with the cases
\begin{equation}
\nonumber
\|u\|_{\mathcal{N}^{1+\frac{2}{r_3},r_3}}\geq 1 \mbox{ and } \|u\|_{\mathcal{N}^{1+\frac{2}{r_3},r_3}}< 1
\end{equation}
separately. For this, in order to replicate our previous argument, we are interested in applying H\"{o}lder's inequality to \eqref{eq4sf}, for $\tau$, $2$ and $\frac{2\tau}{\tau-2}$, where $\tau>2$ is a given exponent, so that
\[\frac{\tau \omega_2}{\tau-2}=\frac{r_3-2}{2}.\]

Thus, consider
\[\tau= \frac{s(p-2)+q-p}{q-2}\]
and remark that if hypothetically $q=p$, we would have $\tau=s$, which was the choice in the proof on Proposition \ref{primaryfractional}.
Moreover, observe that $\tau>2$ since $s>\frac{q+p-4}{p-2}$.

In this manner, let us consider $\|u\|_{\mathcal{N}^{1+\frac{2}{r_3},r_3}}\geq 1$, so that, by combining H\"{o}lder's and Young's inequalities, from \eqref{eq4sf} we have
\begin{align}
\label{eq444sf}
\int_\Omega -f_\epsilon \Delta^\epsilon_{t_3+2} u&\leq C\bigg(\frac{1}{\eta^{\frac{\tau}{\tau^\prime}}}  \|f_\epsilon\|^\tau_{L^\tau} +\eta\int_\Omega \big( |\nabla u|^2+\epsilon\big)^{\frac{r_3-2}{2}}|D^2u|^2
\\
\nonumber
&+\eta\int_\Omega \big( |\nabla u|^2+\epsilon\big)^{\frac{r_3-2}{2}}\bigg)
\\
\nonumber
&\leq C\bigg(\frac{1}{\eta^{\frac{\tau}{\tau^\prime}}}  \|f_\epsilon\|^\tau_{L^\tau} +\eta S_{r_3}+\eta\|u\|^{r_3}_{\mathcal{N}^{1+\frac{2}{r_3},r_3}}\bigg)
\end{align}
for $C=C(N,p,q,s,\Omega)>0$. We stress that in \eqref{eq444sf}, it is used that
\[ \int_\Omega \big( |\nabla u|^2+\epsilon\big)^{\frac{r_3-2}{2}} \leq S_{r_3},\]
and
\begin{align*}
\int_\Omega |\nabla u|^{r_3-2} &\leq \bigg( \int_\Omega |\nabla u|^{r_3}\bigg)^{\frac{r_3-2}{r_3}}|\Omega|^{\frac{2}{r_3}}
\\
&\leq C\|u\|^{r_3-2}_{\mathcal{N}^{1+\frac{2}{r_3},r_3}}
\\
&\leq C\|u\|^{r_3}_{\mathcal{N}^{1+\frac{2}{r_3},r_3}}.
\end{align*}

For the case where $\|u\|_{\mathcal{N}^{1+\frac{2}{r_3},r_3}}< 1$, we once again will have
\[\int_\Omega |\nabla u|^{r_3} \leq C.\]

Thus, by the same argument given in \eqref{eq8pf}, we arrive at

\begin{align}
\label{eq5sf}
\int_\Omega -f_\epsilon \Delta_{t_3+2}^\epsilon u  &\leq \dfrac{C}{\eta^2} \|f\|^2_{L^\tau} +\eta S_{r_3},
\end{align}
for $C=C(N,p,q,s,\Omega)>0$.

In this fashion, since $2<\tau<s$, by combining inequalities \eqref{eq44sf},\eqref{eq444sf} and \eqref{eq5sf}, we obtain
\begin{align}
\nonumber
\int_\Omega -f_\epsilon \Delta_{t_3+2}^\epsilon u  &\leq C\bigg(\frac{\|f\|^\tau_{L^s}}{\eta^{\frac{s}{s^\prime}}}+\dfrac{\|f\|^2_{L^s}}{\eta^2}  + \eta \|u\|^{r_3}_{\mathcal{N}^{1+\frac{2}{r_3},r_3}} +2\eta S_{r_3}+\eta \epsilon^{\frac{s(r_3-2)}{2}} \bigg)
\end{align}
and thus, by combining the latter inequality, \eqref{eq1sf}, \eqref{eq2sf} and \eqref{eq3sf}, with the same argument of Proposition \ref{primaryfractional}, the result follows.

\subsection*{Proof of (c)}



we consider
\[\omega_4= 2-p+\frac{r_4-2}{2},\]
where since $s> \frac{q+p-4}{q-2}$, we have $\omega_4> 0$.

Indeed, remark that as $p<q<p+1$ then $s\geq 1+\frac{1}{1+p-q}>\frac{q+p-4}{q-2}$.

since $t_4=\frac{r_4-2}{2}+\omega_4$.

\[\frac{\rho \omega_4}{\tau-2}=\frac{r_4-2}{2}.\]

Thus, consider
\[\rho= \frac{s(q-2)+p-q}{p-2}\]
and remark that if hypothetically $q=p$, we would have $\rho=s$, which was the choice in the proof on Proposition \ref{primaryfractional}.
Moreover, observe that $\rho>2$ since $s>\frac{q+p-4}{q-2}$.

Actually, in this case, $\rho>s\geq 2$, which is the main difference when contrasting to the last case.

\end{proof}

\section{Proof of the main results}
\label{mainresults}

First, we handle the proof of Theorem \eqref{theorem1} where the effects of the nonlinear differential operator of order $q$ may play no interference in the regularity of the solutions.


\subsection*{Proof of Theorem \eqref{theorem1}}
\vspace{0.5cm}

\begin{itemize}
\item[ {\bf Step 1.}] Existence and Regularity of the Strong Solution.
\end{itemize}
Set $\epsilon=\frac{1}{n}$ and consider $u_n$ given by Lemma \ref{approximatesolutions}.

Here, for the sake of clarity, we shall deal with the cases $\beta=0$ and $\beta>0$.
Thus, for the moment, let us assume that $\beta=0$.

Then, it is clear that by Proposition \ref{primaryfractional}, there exists $u\in W^{1,p}_0\cap \mathcal{N}^{1+\frac{2}{r_1},r_1}$ for which, up to subsequences, if $n\rightarrow+\infty$
\[ u_n \rightarrow u \mbox{ in } W^{1,p}_0 \mbox{ and }  u_n \rightharpoonup u \mbox{ in } \mathcal{N}^{1+\frac{2}{r_1},r_1},\]
where we stress that
\[ \mathcal{N}^{1+\frac{2}{r_1},r_1} \hookrightarrow\hookrightarrow W^{1,r_1} \hookrightarrow W^{1,p}_0.\]
Then it is clear that $u$ satisfies
\[\int_\Omega \alpha |\nabla u|^{p-2}\nabla u \cdot \nabla \phi = \int_\Omega f \phi \ \forall \phi \in W^{1,p}_0,\]
what in particular assures that the distribution associated to \( \mbox{div} \bigg (|\nabla u|^{p-2}\nabla u \bigg)\) belongs to \( L^2.\) Moreover, by a generalized integration by parts formula
\[\int_\Omega |\nabla u|^{p-2}\nabla u \cdot \nabla \phi = \int_\Omega -\Delta_p u \phi, \forall \phi \in W^{1,p}_0\]
since
\(|\nabla u|^{p-2}\nabla u \in L^{p^\prime}\), see Theorem III.2.2 and identity III.2.4 in \cite{galdi}, pp. 159 and 160.

Thus, by the latter identities, we have that
\[-\alpha \Delta_p u = f \mbox{ a.e in } \Omega.\]
Suppose now that $\beta>0$. Then, by Proposition \ref{secondaryfractional}, item (a), we would also have that, up to subsequences
\[ u_n \rightarrow u \mbox{ in } W^{1,q}_0 \mbox{ and }  u_n \rightharpoonup u \mbox{ in } \mathcal{N}^{1+\frac{2}{r_2},r_2},\]
so that by the same arguments presented above  we obtain that $u\in W^{1,q}_0\cap \mathcal{N}^{1+\frac{2}{r_2},r_2}$ and
\[-\alpha \Delta_p u -\beta \Delta_q u  = f \mbox{ a.e in } \Omega,\]
while in this case, we stress that the test functions should belong to $W^{1,q}_0$.

\begin{itemize}
\item[ {\bf Step 2.}] A priori estimate.
\end{itemize}

 Now let $\beta \geq 0$.
By recalling that $\epsilon=\frac{1}{n}$, in order to prove the energy estimate, it is enough to let $n\rightarrow +\infty$ in the energy estimate given by Proposition \ref{primaryfractional} and employ the lower semicontinuity of the norm.
\hfill\qedsymbol

\subsection*{Proof of Theorem \eqref{theorem2}}
In order to proof that  $u\in W^{1,q}_0\cap \mathcal{N}^{1+\frac{2}{r_2},r_2}$ and
\[-\alpha \Delta_p u -\beta \Delta_q u  = f \mbox{ a.e in } \Omega,\] it is enough to repeat the argument given in the second part of Step 1 in the proof of Theorem \ref{theorem1}.
For the energy estimate, in this case, we pass to the limit at the estimate given by Proposition \ref{secondaryfractional} item (a).
\hfill\qedsymbol

\subsection*{Proof of Theorem \eqref{theorem3}}
\begin{itemize}
\item[ {\bf (a)}] We consider once again the approximate solutions $u_n \in W^{3,\tau}$ given by Lemma \ref{approximatesolutions}, where $\epsilon =\frac{1}{n}$. Then, by combining the latter arguments with Proposition \ref{secondaryfractional}, item (b), it is enough to let $n\rightarrow +\infty$.
\end{itemize}
\begin{itemize}
\item[ {\bf (b)}] In this case, we employ item (c) of Proposition \ref{secondaryfractional} and mimic the proof of item (a).
\end{itemize}

\hfill\qedsymbol

\subsection*{Acknowledgments}
The authors would like to thank the  anonymous referee for all of her/his  suggestions which have helped to improve this paper.



\begin{thebibliography}{99}

\addcontentsline{toc}{section}{Bibliography}
\bibitem{akm}{B. Avelin, T. Kuusi and G. Mingione},
\newblock {Nonlinear Calder\'{o}n-Zygmund theory in the limiting case},
\newblock Arch. Rational Mech. Anal. 57 (2017), pp. 1--52, https://doi.org/10.1007/s00205-017-1171-7.


\bibitem{bgop}{A.L. Bais\'{o}n, A. Clop, R. Giova, J. Orobitg, and A. Passarelli di Napoli},
\newblock {Fractional differentiability for solutions of nonlinear elliptic equations},
\newblock Potential Anal. 46 (2017), no. 3, pp. 403--430.


\bibitem{bcm}{P. Baroni, M. Colombo and G. Mingione},
\newblock {Regularity for general functionals with double phase},
\newblock Calc. Var. 57  (2018), no. 2, Art. 62, 48 pp.

\bibitem{bmp2}
\newblock { J.L. Boldrini, L.H. de Miranda and G. Planas},
\newblock {Existence and fractional regularity of solutions for a doubly nonlinear differential inclusion},
\newblock J. Evol. Equations v. 13 (2013), no. 3, pp. 535--560.

\bibitem{byo}{S. Byun and J. Oh},
\newblock {Global gradient estimates for non-uniformly elliptic equations},
\newblock Calc. Var.  56 (2017), no. 2, Art. 46, 36 pp.

\bibitem{cpa}{A. Clop, R. Giova and A. Passarelli di Napoli},
\newblock {Besov regularity for solutions of $p$-harmonic equations},
\newblock Advances in Nonlinear Analysis , {\it to appear}.

\bibitem{ckn1}{P. Carozza, J. Kristensen and A.P. di Napoli},
\newblock {Higher differentiability of minimizers of convex variational integrals},
\newblock  Ann. Inst. H. Poincaré Anal. Non Linéaire 28 (2011), no. 3, pp. 395--411.

\bibitem{ckn2}{P. Carozza, J. Kristensen and A.P. di Napoli},
\newblock {Regularity of minimizers of autonomous convex variational integrals},
\newblock  Ann. Sc. Norm. Super. Pisa Cl. Sci. (5) 13 (2014), no. 4, pp. 1065--1089.

\bibitem{com1}{M. Colombo and G. Mingione},
\newblock {Regularity for double phase variational problems},
\newblock Arch. Ration. Mech. Anal. 215 (2015), no. 2, pp. 443--496.

\bibitem{com3}{M. Colombo and G. Mingione},
\newblock {Bounded minimisers of double phase variational integrals},
\newblock Arch. Ration. Mech. Anal. 218 (2015), no. 1, pp. 219--273.

\bibitem{com2}{M. Colombo and G. Mingione},
\newblock {Calder\'{o}n-Zygmund estimates and non-uniformly elliptic operators},
\newblock  J. Funct. Anal. 270 (2016), no. 4, pp. 1416--1478.

\bibitem{els}{C. Ebmeyer, W. Liu and M. Steinhauer},
\newblock{Global Regularity in Fractional Order
Sobolev Spaces for the p–Laplace Equation
on Polyhedral Domains}
\newblock  Z. Anal. Anwendungen 24 (2005), no. 2, pp. 353--374.

\bibitem{elm}{L. Esposito, F. Leonetti and G. Mingione},
\newblock {Sharp regularity for functionals with $(p,q)$ growth},
\newblock  J. Differ. Equ. 204(2004), pp. 5--55.


\bibitem{galdi}{G.P. Galdi},
\newblock {An introduction to the mathematical theory of the Navier-Stokes equations: Steady-state problems $2^{nd}$ edition}
\newblock Springer, New York, 2011.

\bibitem{gtr}{D. Gilbarg and N.S. Trudinger},
\newblock{`` Elliptic Partial Differential Equations of Second Order''},
\newblock Classics in Mathematics (Reprint of the 1998 edition), Springer-Verlag, Berlin, 2001.


\bibitem{grisvard}{P. Grisvard},
\newblock{``Elliptic Problems in Nonsmooth Domains''},
\newblock SIAM Classics in Applied Mathematics, SIAM, Philadelphia, 2011.

\bibitem{knees}{D. Knees},
\newblock{Global stress regularity of convex and some non convex variational problems},
\newblock Annali di Matematica Pura ed Applicata, \textbf{187} (2008), pp. 157--184.

\bibitem{kufner}{A. Kufner, O. John, O. and S Fu\v{c}ik},
\newblock {``Function Spaces"},
\newblock Noordhoff, Academia, Leyden, 1977.

\bibitem{min}{T. Kuusi and G. Mingione},
\newblock {Guide to nonlinear potential estimates},
\newblock  Bull. Math. Sci. 4 (2014), no. 1, pp. 1--82.

\bibitem{kms}{T. Kuusi, G. Mingione and Y. Sire},
\newblock {Nonlocal self-improving properties},
\newblock  Anal. PDE 8 (2015), no. 1, pp. 57--114.

\bibitem{lady}{O.A. Ladyzhenskaya. and N.N. Ural'tseva},
\newblock{``Linear and Quasilinear Elliptic Equations''},
\newblock Izdat. Nauka, Moscow, (1964) [Russian]. English translation: Academic Press, New York, 1968.

\bibitem{leoni} {G. Leoni},
 \newblock{``A first Course in Sobolev Spaces''},
\newblock Graduate Studies in Mathematics, v. 105, American Mathematical Society, Rhode Island, 2009.

\bibitem{lie1}{G.M. Lieberman},
\newblock{Solvability of quasilinear elliptic equations with nonlinear boundary conditions },
\newblock Trans. Amer. Math. Soc., 273 (1982), pp. 753--765.

\bibitem{lie2}{G.M. Lieberman},
\newblock{The natural generalization of the natural conditions of Ladyzhenskaya and Uralʹtseva for elliptic equations},
\newblock  Comm. Partial Differential Equations 16 (1991), no. 2-3, pp. 311--361.

\bibitem{mar}{P. Marcellini},
\newblock{Regularity and existence of solutions of elliptic equations with p,q-growth conditions.},
\newblock   J. Differential Equations 90 (1991), no. 1, pp. 1--30.

\bibitem{min2}{G. Mingione},
\newblock {Short tales from nonlinear Calder\'{o}n-Zygmung theory},
\newblock  Nonlocal and nonlinear diffusions and interactions: new methods and directions, pp. 159--204,
Lecture Notes in Math., 2186, Fond. CIME/CIME Found. Subser., Springer, Cham, 2017.


\bibitem{dmm1}{L.H. de Miranda and M. Montenegro},
\newblock {A priori estimates for a class of degenerate elliptic equations},
\newblock Nonlinear Differ. Equ. Appl. v. 20 (2013), no. 5, pp. 1683--1699.

\bibitem{dmm2}{L.H. de Miranda and M. Montenegro},
\newblock {Regularity theory for a class of quasilinear equations},
\newblock Math. Nachr., v.287 (2014), pp. 1241--1254.



\bibitem{schi}{A. Schicorra},
\newblock {Nonlinear commutators for the fractional $p$-Laplacian and applications},
\newblock Math. Ann. 366 (2016), no. 1-2, pp. 695--720.

\bibitem{zhikov}{V.V. Zhikov},
\newblock {On Lavrentiev's phenomenon},
\newblock . Russian J. Math. Phys. 3 (1995), no. 2, pp. 249--269.





\end{thebibliography}
\end{document}